\newtheorem{definition}{Definition}[section]
\newtheorem{lemma}[definition]{Lemma}
\newtheorem{prop}[definition]{Proposition}
\newtheorem{theorem}[definition]{Theorem}
\newtheorem{cor}[definition]{Corollary}
\newtheorem{remark}[definition]{Remark}
\theoremstyle{definition}
\newtheorem{exam}[definition]{Example}
\newtheorem{fact}[definition]{Fact}
\newcommand*{\ann}{\mathop{{\rm ann}}\nolimits}
\newcommand*{\Ass}{\mathop{{\rm Ass}}\nolimits}
\newcommand{\DPR}{\mathop{{\rm DPR}}\nolimits}
\newcommand*{\Inv}{\mathop{{\rm Inv}}\nolimits}
\newcommand*{\Ker}{\mathop{{\rm Ker}}\nolimits}
\newcommand*{\lcm}{\mathop{{\rm lcm}}\nolimits}
\newcommand*{\rad}{\mathop{{\rm rad}}\nolimits}
\newcommand{\Zg}{\mathop{{\rm Zg}}\nolimits}
\newcommand*{\Div}{\mathop{\rm{Div}}\nolimits}
\newcommand*{\seq}{\subseteq}
\newcommand*{\bsm}{\left(\begin{smallmatrix}}
\newcommand*{\esm}{\end{smallmatrix}\right)}
\newcommand*{\bp}{\begin{pmatrix}}
\newcommand*{\ep}{\end{pmatrix}}
\newcommand*{\wg}{\wedge}
\newcommand{\midd}{ | }
\newcommand*{\Ra}{\Rightarrow}
\newcommand*{\N}{\mathbb{N}}
\renewcommand*{\phi}{\varphi}
\date{}
\begin{document}

\footskip=30pt

\title[]{Decidability of the theory of modules over
Pr\"{u}fer domains with infinite residue fields}

\author[]{Lorna Gregory}

\address[L.~Gregory]{University of Camerino, School of Science and Technologies,
Division of Mathematics, Via Madonna delle Carceri 9, 62032 Camerino, Italy}

\email{lorna.gregory@gmail.com}

\author[]{Sonia L'Innocente}

\address[S.~L'Innocente]{University of Camerino, School of Science and Technologies,
Division of Mathematics, Via Madonna delle Carceri 9, 62032 Camerino, Italy}

\email{sonialinnocente@unicam.it}

\author[]{Gena Puninski}

\address[G.~Puninski]{Belarusian State University, Faculty of Mechanics and Mathematics, av. Nezalezhnosti 4,
Minsk 220030, Belarus}


\author[]{Carlo Toffalori}

\address[C.~Toffalori]{University of Camerino, School of Science and Technologies,
Division of Mathematics, Via Madonna delle Carceri 9, 62032 Camerino, Italy}

\email{carlo.toffalori@unicam.it}

\thanks{The second and fourth authors were supported by Italian PRIN 2012 and GNSAGA-INdAM}

\thanks{Gena Puninski died on April 29, 2017, while this paper was
being completed. The other three authors would like to dedicate
this article as a tribute to his memory.}
\subjclass[2000]{03C60 (primary), 03C98, 03B25, 13F05}

\keywords{Pr\"ufer domain, B\'ezout domain, Prime radical relation, Decidability}

\begin{abstract}
We provide algebraic conditions ensuring the decidability of
the theory of modules over effectively given Pr\"ufer (in particular B\'ezout)
domains with infinite residue fields in terms of a suitable generalization of the prime radical
relation. For B\'{e}zout domains these conditions are also necessary.
\end{abstract}

\maketitle

\pagestyle{plain}

\vspace{3mm}

\section{Introduction}

We deal here with decidability of first order theories of modules over Pr\"ufer (in particular B\'ezout) domains $R$ with infinite residue fields. We assume $R$ {\sl effectively given} (so countable), in order to  ensure that the decision problem for $R$-modules makes sense.

The model theory of modules over B\'ezout domains, with some hints at Pr\"ufer domains, is studied in \cite{PT15}. The decidability of the theory of modules over the ring of algebraic integers is proved in \cite{LPT} (see also \cite{LP}), and a similar result is obtained in \cite{PT14} over B\'ezout domains obtained from principal ideal domains by the so called D+M-construction \cite{F-S}.

On the other hand Gregory \cite{Gre15}, extending \cite{PPT}, proved that the theory of modules over a(n effectively given) valuation domain $V$ is decidable if and only if there is an algorithm which decides the prime radical relation, namely, for every $a, b \in V$, answers whether $a \in \rad(bV)$ (equivalently, whether the prime ideals of $V$ containing $b$ also include $a$).

This paper develops a similar analysis in a closely related setting, that is, over Pr\"ufer domains. In fact a domain is Pr\"ufer if and only if all its localizations at maximal ideals are valuation domains. B\'ezout domains are a notable subclass of Pr\"ufer domains. In both cases we focus on the domains all of whose residue fields with respect to maximal ideals are infinite. The reason and the benefit of this choice are illustrated in $\S$ 3 below. Notice that Pr\"ufer (indeed B\'ezout) domains with infinite residue fields include the ring of algebraic integers and the ring of complex valued entire functions - even if the latter is uncountable and so cannot be effectively given (but see the analysis of its Ziegler spectrum in \cite{LPPT}). Other noteworthy examples will be proposed in $\S$ 6.

Our main result, resembling \cite{Gre15}, states that, if $R$ is such a B\'ezout domain, then the theory of $R$-modules is decidable if and only if there is an algorithm which answers a sort of double prime radical relation, in detail, given $a, b, c, d \in R$, decides whether, for all  prime ideals $\mathfrak{p},\mathfrak{q}$ with $\mathfrak{p}+\mathfrak{q}\neq R$, $b \in \mathfrak{p}$ implies $a \in \mathfrak{p}$ or $d \in\mathfrak{q}$ implies $c \in \mathfrak{q}$. This will be proved in $\S$ 6. Generalizations to Pr\"ufer domains will be presented in the final part of the paper, in $\S$ 7. The  preceding sections $\S \S$ 2-5 describe the framework of (effectively given) Pr\"ufer domains and prepare the main theorems.

We refer to all the already mentioned papers and books, as well as to the key references on model theory of modules, \cite{Preb1}, \cite{Preb2} and \cite{Zi}. We also assume some familiarity with Pr\"ufer domains, as treated, for instance, in \cite{F-S} and \cite{Gi}.  \lq \lq Domain" means commutative domain with unity, and \lq \lq module" abbreviates right unital module, unless otherwise stated.

We thank the referee for her/his valuable comments and suggestions.

\section{Pr\"ufer domains}

First let us summarize some basic facts on the model theory of modules over Pr\"ufer, and in particular B\'ezout, domains.

Recall that a domain is Pr\"ufer if all its localizations at maximal ideals, and consequently at non-zero prime ideals, are valuation domains.

A domain $R$ is said to be \emph{B\'ezout}, if every 2-generated ideal (and consequently every finitely generated ideal) is principal. Thus $R$ is B\'ezout if and only if the so called B\'ezout identity holds: for every $0 \neq a, b\in R$ there are $c, u, v, g, h \in R$ such that $au+ bv= c$ and $cg=a$, $ch= b$ hold. Then $c$ is called a \emph{greatest common divisor} of $a$ and $b$, written $\gcd(a,b)$, and is unique up to a multiplicative unit.

B\'ezout domains are GCD domains, \cite[p. 17]{F-S}, and hence, \cite[4.5]{F-S}, the intersection of two principal ideals is also principal. For every $0 \neq a, b \in R$, if $aR \cap bR = dR$, then $d$ is said to be a \emph{least common multiple} of $a$ and $b$, written $\lcm(a,b)$. This least common multiple is again unique up to a multiplicative unit. Thus for $0\neq a, b\in R$, under a suitable choice of units, we obtain the equality $ab= \gcd(a,b)\cdot \lcm(a,b)$.

B\'ezout domains are Pr\"ufer.

Let $L_R$ denote the first order language of modules over any commutative ring $R$.  If $a \in R$ then $a \midd x$ denotes the \emph{divisibility formula} of $L_R$, which defines in a module $M$ the submodule $Ma$. Similarly the
\emph{annihilator formula} $xb=0$  ($b \in R$) defines in $M$ the submodule $\{m\in M\midd mb=0\}$. Let $T_R$ be the $L_R$-theory of $R$-modules.

Positive primitive formulae (pp-formulae from now on) play a crucial role in the model theory of modules. Over a Pr\"ufer domain they admit the following {\sl normal} form.

\begin{fact}\label{normal}  (\cite[Fact 2.2]{PT15}) Every pp-formula $\phi(x)$ over a Pr\"ufer domain $R$ is equivalent to a finite sum of formulae $\exists y \, (ya = x \land yb = 0)$, and also to a finite conjunction of formulae $c \midd xd$ (with $a, b, c, d \in R$).
\end{fact}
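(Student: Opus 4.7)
The plan is to establish both normal forms by structural induction on the pp-lattice of $R$, exploiting the B\'ezout identity (in the B\'ezout case) and, more generally, the invertibility of finitely generated ideals (in the Pr\"ufer case). Every pp-formula in one free variable is built from the atomic pp-formulae $r\mid x$ and $xr=0$ using the two pp-lattice operations of conjunction and sum, the latter encoded by introducing existentials for the summands. Hence it suffices to show that each of the two target classes contains the atomic formulae and is closed under both operations.

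For the sum form $\sum_i \exists y_i\,(y_i a_i = x \wedge y_i b_i = 0)$, the atomic formulae sit inside the class (take $a=1,\,b=r$ for $xr=0$, and $a=r,\,b=0$ for $r\mid x$), and closure under sum is tautological. The real work is closure under intersection: given two such \emph{elementary} formulae with parameters $(a_i,b_i)$ for $i=1,2$, their intersection consists of the elements $m = y_1 a_1 = y_2 a_2$ with $y_1 b_1 = y_2 b_2 = 0$. To rewrite this as a sum of elementary formulae, I would solve the equation $y_1 a_1 = y_2 a_2$ via a B\'ezout relation on $(a_1,a_2)$: writing $\gcd(a_1,a_2)=g$ with $a_i = g a_i'$ and $(a_1',a_2')=R$, one substitutes $(y_1,y_2)$ through a $2\times 2$ matrix of unit determinant to reduce to a single free parameter, which then recombines with the annihilator conditions to yield the desired sum decomposition. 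In the Pr\"ufer case the same computation works locally at every maximal ideal because $(a_1,a_2)$ is invertible, and the resulting identities then globalise.

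Dually, for the conjunction form $\bigwedge_i (c_i\mid x d_i)$, the atomic formulae are in the class ($r\mid x$ is $r\mid x\cdot 1$, and $xr=0$ is $0\mid xr$) and conjunction is trivial. The nontrivial step is closure under sum: given $(c_1\mid x d_1)$ and $(c_2\mid x d_2)$, an element $m$ in their sum decomposes as $m=m_1+m_2$ with $c_i\mid m_i d_i$, and a dual B\'ezout manipulation on the pairs $(c_1,c_2)$ and $(d_1,d_2)$, possibly invoking $\lcm$, expresses membership of $m$ as a finite conjunction of divisibility conditions of the required shape.

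The crux of the argument in both directions is the same piece of linear algebra: a $2\times 2$ reduction over $R$ via a B\'ezout matrix $\bsm u & v\\ s & t\esm$ of unit determinant sending $\bsm a_1\\a_2\esm$ to $\bsm g\\0\esm$, together with its Pr\"ufer-domain counterpart obtained by localising at each maximal ideal and invoking invertibility of the ideal $(a_1,a_2)$. This is the main obstacle; once it is settled, both normal forms follow by a routine induction on the construction of pp-formulae, and the equivalence of the three presentations (arbitrary pp-formula, sum form, conjunction form) is established.
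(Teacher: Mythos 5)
The proposal has a genuine gap at its starting point, and the gap is essentially the whole content of the statement. You assert that every pp-formula in one free variable is built from the atomic pp-formulae $r\mid x$ and $xr=0$ using the pp-lattice operations $\wedge$ and $+$. That is not a general fact about pp-formulae: a pp-formula in one free variable has the shape $\exists\bar y\,\bigl((x,\bar y)H = 0\bigr)$ for a matrix $H$ over $R$, with a whole block of existentially quantified auxiliary variables, and nothing in the definition lets you dismantle this into a $\wedge$,$+$-combination of divisibility and annihilator formulae. For a general ring this is simply false; that such a reduction is available over Pr\"ufer domains is exactly the point of Fact~\ref{normal}. So the proposed induction has no base case, and showing that your two target classes contain the atoms and are closed under $\wedge$ and $+$ would at best show that they contain the $\wedge$,$+$-closure of the atoms, not that this closure equals the set of all pp-formulae up to $T_R$-equivalence.

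The work that is actually needed is to deal with the matrix $H$ directly. One route is via free realizations: $\phi$ corresponds to a pointed finitely presented module $(C_\phi, c_\phi)$, and one must use the Pr\"ufer, or B\'ezout, structure of $R$ to reduce the presentation matrix of $C_\phi$ and decompose $(C_\phi, c_\phi)$ as a direct sum of pointed cyclically presented modules $(R/b_iR,\ a_i+b_iR)$; these realize exactly the elementary formulae $\exists y\,(ya_i=x\wedge yb_i=0)$, yielding the sum form. Your $2\times2$ unimodular reduction is precisely the right kind of ingredient, but it belongs there, acting on the presentation matrix, rather than appearing as the closure-under-intersection step of an induction that has not yet begun. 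Once the sum form is obtained, the conjunction form follows at once by elementary duality (the duality $D$ swaps $r\mid x$ with $xr=0$ and interchanges $\wedge$ and $+$), so a second symmetric argument is unnecessary. Finally, the claim that the B\'ezout computation "then globalises" over a Pr\"ufer domain is not automatic: that is precisely where the nontrivial patching enters, via invertibility of finitely generated ideals, and in the spirit of Fact~\ref{tuganbaev} it would need to be made explicit.
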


Note that the formula $\exists y \, (ya = x \land yb = 0)$ is the elementary dual, see \cite{Hr}, of the formula $b\midd ax$.

Over B\'ezout domains one can say more.

\begin{fact}\label{2-1} (\cite[Lemma 2.3]{PT15})
Let $R$ be a B\'ezout domain. Then every pp-formula $\phi(x)$ of $L_R$ is equivalent in $T_R$ to a finite sum of formulae $a\midd x\wg xb=0$, $a, b\in R$, and to a finite conjunction of formulae $c\midd x+ xd=0$, $c, d\in R$.
\end{fact}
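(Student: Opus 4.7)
My plan is to upgrade the two Pr\"ufer-level normal forms from Fact~\ref{normal} by splitting off the gcd in each building block and then using a B\'ezout identity. For the sum form, Fact~\ref{normal} already writes every pp-formula as a finite sum of formulae $\psi_{a,b}(x) := \exists y(ya = x \wedge yb = 0)$, so it suffices to show that each $\psi_{a,b}$ is equivalent in $T_R$ to $a \mid x \wedge xb' = 0$, where $b' := b/\gcd(a,b)$ (with the cases $a = 0$ or $b = 0$ immediate). Set $g = \gcd(a,b)$, $a = ga'$, $b = gb'$, so that $\gcd(a',b') = 1$, and pick $u, v \in R$ with $ua' + vb' = 1$. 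The forward direction is clear: if $ma = x$ and $mb = 0$ then $xb' = (ma)b' = m(a'b) = (mb)a' = 0$. For the reverse, given $m \in M$ with $ma = x$ and $xb' = 0$, set $z := -mvb'$; the identities $mga'b' = xb' = 0$ and $1 - ua' = vb'$ give $za = 0$ and $(m+z)b = 0$, so that $m + z$ witnesses $\psi_{a,b}$.

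For the conjunction form, Fact~\ref{normal} writes every pp-formula as a finite conjunction of $\chi_{c,d}(x) := c \mid xd$, and the task reduces to showing each $\chi_{c,d}$ is equivalent in $T_R$ to the pp-sum $(c'' \mid x) + (xg = 0)$, where $g = \gcd(c,d)$, $c = gc''$, $d = gd''$ and $\gcd(c'',d'') = 1$. Concretely, in every $R$-module $M$ one must check
\[
\{x \in M : xd \in cM\} = c''M + \{n \in M : gn = 0\}.
\]
The inclusion $\supseteq$ is a direct computation. For $\subseteq$, take $x$ with $xd = cm$ for some $m \in M$; then $g(xd'' - c''m) = 0$, so $xd'' \in c''M + \{n \in M : gn = 0\}$. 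Fixing $u, v \in R$ with $uc'' + vd'' = 1$ and writing $x = (xu)c'' + v(xd'')$ then places $x$ itself inside $c''M + \{n \in M : gn = 0\}$.

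The only genuine obstacle in either half is the use of the B\'ezout identity: it is what turns an \lq\lq indirect'' formula like $\psi_{a,b}$ or $\chi_{c,d}$ into one phrased purely in terms of the gcd and its complementary cofactor. Once the individual building blocks have been rewritten, distributivity of sums and conjunctions of pp-formulae packages the pieces into the two advertised normal forms, while the edge cases in which some parameter is $0$ are handled by inspection.
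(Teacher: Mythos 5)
The paper does not include a proof of this fact; it cites it from \cite[Lemma~2.3]{PT15}, so there is nothing in-document to compare against. Your argument is correct, and it is the expected one: you take the Pr\"ufer-level normal form of Fact~\ref{normal} and show each summand $\exists y(ya=x\wedge yb=0)$ collapses to a single $a\mid x \wedge xb'=0$ (with $b'=b/\gcd(a,b)$), and dually each conjunct $c\mid xd$ collapses to a single pp-sum $c''\mid x + xg=0$ (with $g=\gcd(c,d)$, $c=gc''$, $d=gd''$). I checked the two computations and they go through: in the first half, $za = -v\,(mga'b') = -v\cdot xb' = 0$ and $(m+z)b = mgb'(1-vb') = u\,(mga'b') = 0$; in the second half, $g(xd''-c''m)=0$ followed by $x=(xu)c''+v(xd'')=(xu+vm)c''+vn$ places $x$ in $c''M+\{n:gn=0\}$. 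The edge cases with a zero parameter are indeed trivial as you note. One small stylistic remark: your closing appeal to \lq\lq distributivity of sums and conjunctions'' is unnecessary, since Fact~\ref{normal} already supplies the outer sum and conjunction; once each individual block has been replaced by a formula of the required shape, both advertised normal forms are immediate.
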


In the above, and throughout this paper, $c\midd x+xd=0$ stands for $(c\midd x)+(xd=0)$, the sum of $c\midd x$ and $xd=0$.

The representation in Fact \ref{2-1} is obtained by using $\gcd$ - a tool we cannot rely on over arbitrary Pr\"ufer domains. However the following result by Tuganbaev provides some help also in this enlarged setting.

\begin{fact}\label{tuganbaev} If $R$ is a Pr\"{u}fer domain, then for all $a, b\in R$ there exist $\alpha, r, s \in R$ such that $a \alpha = b r$ and $b (\alpha - 1)= a s$.
\end{fact}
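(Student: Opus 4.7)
The plan is to reduce the statement to the standard characterization of a Pr\"ufer domain as one in which every finitely generated ideal is invertible, and then read off the desired $\alpha, r, s$ as quotients of the elements produced by that invertibility. The cases $a=0$ and $b=0$ are immediate (take $\alpha=1$, $r=s=0$, respectively $\alpha=0$, $r=s=0$), so I shall assume $a, b$ are both non-zero.

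Under this assumption, the finitely generated ideal $(a,b)$ is invertible, so $(a,b)(a,b)^{-1}=R$, where $(a,b)^{-1}=\{x\in K : xa, xb\in R\}$ inside the fraction field $K$. Unpacking this and collecting terms, there exist $x, y\in (a,b)^{-1}$ with $ax+by=1$. Clearing denominators to a common $d\in R\setminus\{0\}$, I obtain $p, q\in R$ such that
\begin{equation*}
ap+bq=d, \qquad d\mid ap,\ bp,\ aq,\ bq \text{ in } R,
\end{equation*}
the four divisibility relations expressing precisely that $p/d, q/d\in (a,b)^{-1}$. I then propose the candidates
\begin{equation*}
\alpha=\frac{bq}{d}, \qquad r=\frac{aq}{d}, \qquad s=-\frac{bp}{d},
\end{equation*}
all of which lie in $R$ by the divisibility conditions above.

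The verification is a short manipulation that only uses $ap+bq=d$. On the one hand, $a\alpha=abq/d=b(aq/d)=br$. On the other, $b\alpha-b=(b^2q-bd)/d=b(bq-d)/d=-b(ap)/d=a(-bp/d)=as$, which is the desired second identity $b(\alpha-1)=as$. The main step that merits care is the passage from abstract invertibility of $(a,b)$ to the concrete elements $p, q, d$ with all four divisibility constraints simultaneously: this is routine once one unwinds the definition of $(a,b)^{-1}$ and uses that the latter is an $R$-submodule of $K$, but it is the only non-mechanical point in the argument. An alternative route would be to work locally, noting that in each valuation localization $R_{\mathfrak{m}}$ either $a\mid b$ or $b\mid a$ and choosing $\alpha_{\mathfrak{m}}\in\{0,1\}$ accordingly, and then patch via invertibility; this is morally the same proof, packaged differently.
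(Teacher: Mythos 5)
Your proof is correct, and it takes a genuinely different route from the paper. The paper does not prove the fact directly at all: it cites Tuganbaev's lemma on right distributive rings, specialised to $R=M$ (which produces $\alpha$ with $a\alpha\in bR$ and $b(\alpha-1)\in aR$), and then invokes the characterisation of Pr\"ufer domains as precisely the commutative distributive integral domains. Your argument instead appeals to the other standard characterisation, invertibility of the two-generated ideal $(a,b)$, and converts it into an explicit Bezout-like relation $ap+bq=d$ with the four divisibility constraints $d\mid ap,bp,aq,bq$, from which $\alpha=bq/d$, $r=aq/d$, $s=-bp/d$ fall out by a direct computation. What this buys you is a self-contained, elementary proof with explicit formulas; what the paper's citation buys is brevity and a pointer to the noncommutative generalisation. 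One small remark on your presentation: the ``main step that merits care'' is in fact automatic --- once you write $x=p/d$ and $y=q/d$ over a common denominator, the four divisibilities are nothing but the statements $xa,xb,ya,yb\in R$, which you already had from $x,y\in(a,b)^{-1}$; there is no extra condition to arrange. So the step is even more routine than you suggest.
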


In fact \cite[Lemma 1.3]{Tu} (specialised to the case $R=M$) shows that if $R$ is a right distributive ring (i.e., when viewed as a right module over itself, it has distributive lattice of submodules) then for all $a, b \in R$ there exists $\alpha \in R$ such that $a \alpha \in b R$ and $b (\alpha -1) \in aR$. On the other hand Pr\"{u}fer domains are exactly the commutative distributive domains.

The {\sl Ziegler spectrum} of $R$, $\Zg_R$, is a topological space whose points are (isomorphism classes of) indecomposable pure injective $R$-modules, and whose topology is given by basic open sets of the form $(\phi / \psi)$ where $\phi$ and $\psi$ ranges over pp-formulae of $L_R$ in one free variable. Recall that an open set $(\phi / \psi)$ consists of the $R$-modules $N$ in $\Zg_R$ such that $\phi(N)$ strictly includes its intersection with $\psi(N)$. Moreover the endomorphism ring of a module $N$ in $\Zg_R$ is local (see \cite[Theorem 4.27]{Preb1}, for instance).

The lattice of pp-$1$-formulae of a Pr\"{u}fer domain is distributive \cite[3.1]{EkHe}. Thus, \cite[3.3]{PunKGserial} implies the following fact which we will use repeatedly.

\begin{fact}\label{pp-uni}
If $R$ is a Pr\"{u}fer domain and $N$ is an indecomposable pure injective $R$-module then $N$ is pp-uniserial i.e. its lattice of pp-definable subgroups is totally ordered.
\end{fact}


Over any ring $R$ and for every choice of pp-formulae $\phi_i (x)$ and $\psi_j(x)$ ($i \leq n$, $j \leq m)$ in one free variable $x$ we have the following equality of open subsets of $\Zg_R$.
$$(\dagger) \quad \quad(\sum _{i \leq n} \phi_i \, / \, \bigwedge_{j \leq m} \psi_j) = \bigcup_{i \leq n, \, j \leq m} (\phi_i \, / \, \psi_j).$$

Combined with Fact \ref{2-1} this gives us the following for B\'ezout domains.

\begin{lemma}\label{uniserial} (\cite[Cor. 4.1]{PT15})
Over a B\'ezout domain a basic open set $(\phi / \psi)$ of $\Zg_R$ is the finite
union of open sets $(a \midd x \wg xb = 0 \, / \, c \midd x + xd = 0)$.
\end{lemma}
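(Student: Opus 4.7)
The plan is to apply Fact~\ref{2-1} to put the pp-pair $(\phi/\psi)$ into the standard shape and then show that the open set distributes over the sum in $\phi$ and the conjunction in $\psi$.

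First I would write $\phi$ as a finite sum $\sum_{i=1}^{n}\phi_i$, where each $\phi_i$ is of the form $a_i\mid x\wg xb_i=0$, and $\psi$ as a finite conjunction $\bigwedge_{j=1}^{m}\psi_j$, where each $\psi_j$ is of the form $c_j\mid x+xd_j=0$. Then I would claim
\[
(\phi/\psi)=\bigcup_{i\le n,\,j\le m}(\phi_i/\psi_j).
\]

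For the inclusion $\supseteq$, note that $\phi_i$ entails $\phi$ and $\psi$ entails $\psi_j$, so $\phi_i(N)\seq\phi(N)$ and $\psi(N)\seq\psi_j(N)$ for every module $N$. Hence any witness $m\in\phi_i(N)\sm\psi_j(N)$ is automatically a witness for $(\phi/\psi)$.

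For the inclusion $\seq$, suppose $N\in(\phi/\psi)$ and pick $m\in\phi(N)\sm\psi(N)$. Decompose $m=m_1+\dots+m_n$ with $m_i\in\phi_i(N)$, which is possible since $\phi(N)=\sum_i\phi_i(N)$. Because $\psi(N)=\bigcap_j\psi_j(N)$ and $m\notin\psi(N)$, there is $j$ with $m\notin\psi_j(N)$; and since $\psi_j(N)$ is a subgroup of $N$, at least one summand $m_i$ must fail to lie in $\psi_j(N)$. Such an $m_i$ witnesses $N\in(\phi_i/\psi_j)$.

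There is really no obstacle here: the argument is just the standard distributivity of the open-set operator over $+$ in the numerator and $\wg$ in the denominator, combined with the observation that $\psi_j(N)$ is closed under addition. The content of the lemma is entirely in Fact~\ref{2-1}, which puts $\phi$ and $\psi$ into the required canonical form.
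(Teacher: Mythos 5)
Your proof is correct and follows exactly the route the paper intends: apply Fact~\ref{2-1} to put $\phi$ as a sum and $\psi$ as a conjunction, then invoke the general distributivity identity $(\sum_i \phi_i / \bigwedge_j \psi_j)=\bigcup_{i,j}(\phi_i/\psi_j)$. The paper merely states this identity as a general fact valid over any ring, whereas you spell out the easy subgroup argument proving it; the substance and structure of the argument are the same.
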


Using once again $\gcd$ and $\lcm$ we may further assume that $c= ga$ and $b= dh$ for some $g, h\in R$. Clearly this open set is empty if and only if either some element among $a$, $d$, $g$, $h$ is 0 or $g$ and $h$ are coprime, that is, $\gcd(g,h)= 1$.





Combined with Fact \ref{normal}, $(\dagger)$ gives the following for Pr\"{u}fer domains.

\begin{lemma}\label{2-3}
Over a Pr\"{u}fer domain a basic open set $(\phi / \psi)$ of $\Zg_R$ is the finite union of open sets $( \exists y \, (ya = x \land yb = 0) \, / \, c \midd xd)$.
\end{lemma}

The role of these open sets $(a \midd x \wg xb = 0 \, / \, c \midd x + xd = 0)$ is crucial even over arbitrary Pr\"ufer domains. In fact, thanks to \cite{Tu} and Fact \ref{tuganbaev}, the following can be shown.

\begin{lemma}\label{extension}
Let $R$ be a Pr\"ufer domain and $a, b, c, d, \in R$. Let $\alpha, s, r \in R$ satisfy $a \alpha = br$ and $b (\alpha -1 ) = as$, and similarly let $\delta, t, u \in R$ satisfy $d \delta = ct$ and $c (\delta -1) = du$. Then

\[ \left(\frac{ \exists y (x = ya \wedge yb = 0) }{c \midd xd}\right)
= \]
\[ =
\left(\frac{ a \midd x \wedge xs=0 }{u \midd x + xd = 0}
\right)
\cap
\left[
\left(\frac{ x \alpha = 0}{x=0} \right)
\cup
\left(\frac{ x=x }{\alpha \midd x } \right)
\right]
\cap
\left[
\left(\frac{ x \delta = 0}{x=0} \right)
\cup
\left(\frac{ x=x }{\delta \midd x } \right)
\right]
.\]

\end{lemma}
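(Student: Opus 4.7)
The plan is to argue pointwise on $\Zg_R$: for each indecomposable pure-injective $N$, I will show that $N$ belongs to the left-hand set if and only if it belongs to the triple intersection on the right. The essential structural fact is that the endomorphism ring of any $N\in\Zg_R$ is local, so whenever $\alpha\in R$ acts non-invertibly on $N$, the element $1-\alpha$ acts as an automorphism of $N$ (and analogously for $\delta$). Moreover, each of the two unions on the right-hand side describes exactly the set of $N$ on which $\alpha$, respectively $\delta$, acts non-invertibly: the first disjunct captures nonzero kernel, the second failure of surjectivity.

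The first step is to show that $N$ in the LHS forces both $\alpha$ and $\delta$ to act non-invertibly on $N$. If $\alpha$ were invertible on $N$, then for any $y\in\ann_N(b)$ the identity $a\alpha=br$ would give $(ya)\alpha = y(br) = (yb)r = 0$, so $ya\in\ann_N(\alpha)=0$, whence the numerator of the LHS vanishes on $N$. If $\delta$ were invertible on $N$, then from $d\delta=ct$ one gets $Nd = Nd\delta = Nct \seq Nc$, so $c\mid xd$ holds for every $x\in N$, and the denominator of the LHS equals $N$. In either case $N$ fails to lie in the LHS.

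The second step is to assume $\alpha$ and $\delta$ both act non-invertibly, so that $1-\alpha$ and $1-\delta$ act as automorphisms of $N$, and to identify the numerator of the LHS with $Na\cap\ann_N(s)$ and its denominator with $Nu+\ann_N(d)$; equivalently, the LHS then coincides on $N$ with the first factor of the RHS. For the numerator, the inclusion into $Na\cap\ann_N(s)$ follows from $b(\alpha-1)=as$: if $yb=0$, then $(ya)s = y\cdot b(\alpha-1)=0$. The reverse inclusion uses that if $m=xa$ with $ms=0$, then $xb(\alpha-1)=0$, and invertibility of $1-\alpha$ forces $xb=0$, so $m=xa$ lies in the numerator. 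The denominator identification is analogous: $c(\delta-1)=du$ together with invertibility of $1-\delta$ yields $Nc=Ndu$, and then a direct computation gives $\{m\in N:md\in Nc\}=Nu+\ann_N(d)$.

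Combining the two steps, $N$ belongs to the LHS if and only if $\alpha$ and $\delta$ both act non-invertibly on $N$ and the numerator of the first RHS factor strictly contains its intersection with its denominator, which is precisely membership in the triple intersection on the RHS. The ingredients are Tuganbaev's identities $a\alpha=br$, $b(\alpha-1)=as$, $d\delta=ct$, $c(\delta-1)=du$ together with the invertibility dichotomy supplied by the local endomorphism ring of each $N\in\Zg_R$; I do not foresee any serious obstacle beyond careful bookkeeping between these identities and the two invertibility cases.
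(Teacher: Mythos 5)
Your proof is correct and follows essentially the same route as the paper's: both arguments hinge on Tuganbaev's identities together with locality of $\End N$ to convert "$\alpha$ (resp. $\delta$) acts non-invertibly" into "$\alpha-1$ (resp. $\delta-1$) acts invertibly," and then use that invertibility to identify the numerator $\exists y(x=ya\wedge yb=0)$ with $a\mid x\wedge xs=0$ and the denominator $c\mid xd$ with $u\mid x + xd=0$ pointwise on $N$. Your organization is slightly cleaner—isolating the non-invertibility consequence of membership in the LHS and then proving an outright equivalence of the numerator/denominator pp-subgroups under that hypothesis—but the underlying ideas and computations match the paper's proof.
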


\begin{proof}
Note that $N\in\left(\frac{x\alpha=0}{x=0}\right)\cup\left(\frac{x=x}{\alpha\midd x}\right)$ if and only if $\alpha$ acts non-invertibly on $N$.

If $\alpha$ acts invertibly on $N$, then $\exists y (x = ya \wedge yb = 0)$ is equivalent to $x = 0$ in $N$, and consequently implies $c \midd xd$. Namely, if $m, n \in N$ satisfy $m = na$ and $nb = 0$, then $m \alpha = n a \alpha = n b r = 0$, and consequently $m = 0$. Thus, if $N$ is in the left hand set then $\alpha$ acts non-invertibly on $N$.

Similarly, if $\delta$ acts invertibly on $N$, then $c \midd xd$ is equivalent to $x = x$ in $N$, and consequently is implied by $\exists y (x = ya \wedge yb  = 0)$. Namely, every $m \in N$ satisfies $m d \delta = m c t$, whence $c \midd m d \delta$ and (as $\delta$ acts invertibly) $c \midd md$. Thus, if $N$ is in the left hand set then $\delta$ acts non-invertibly on $N$.

So we have shown that if $N$ is in the left hand set then $N$ is in the second and third conjunct of the right hand side. Moreover, if $N$ is in either the right hand set or the left hand set then $\alpha$ and $\delta$ act non-invertibly on $N$. Thus, if $N$ is in either the right hand set or the left hand set then, since the ring of endomorphisms of $N$ is local, $\alpha -1$ and $\delta - 1$ act invertibly on $N$.

\textbf{Claim 1}: If $\beta\in R$ acts invertibly on $N$ and $c\beta=du$ then $c\midd xd$ is equivalent to $u\midd x+xd=0$ in $N$.

Suppose $m,n\in N$ and $nc=md$. Since $\beta$ acts invertibly on $N$ there exists $n'\in N$ such that $n'\beta=n$. Thus $n'du=n'\beta c=md$. So $(n'u-m)d=0$ and hence $m$ satisfies $u\midd x+xd=0$.

Conversely, if $m\in N$ satisfies $u\midd x+xd=0$ then there exists $n,l\in N$ such that $m=nu+l$ and $ld=0$. So $md=nud=n\beta c$. Thus $c\midd md$.

\textbf{Claim 2}: If $\gamma\in R$ acts invertibly on $N$ and $as=b\gamma$ then $\exists y (x = ya \wedge yb = 0)$ is equivalent to $a\midd x\wedge xs=0$ in $N$.

Suppose $m,n\in N$, $m=na$ and $nb=0$. Then $ms=nas=nb\gamma=0$. So $m$ satisfies $a\midd x\wedge xs=0$.

Conversely, suppose that $m,n\in N$, $m=na$ and $ms=0$. Then $nb\gamma=nas=0$. Since $\gamma$ acts invertibly on $N$, $nb=0$. Thus $m$ satisfies $a\midd x\wedge xs=0$.

Since we have shown that if $N$ is in either the right hand set or the left hand set then $\alpha -1$ and $\delta - 1$ act invertibly on $N$, then applying claim $1$ with $\beta=\delta-1$ and claim 2 with $\gamma=\alpha-1$ finishes the proof.
\end{proof}

Let $N$ be an $R$-module. Define
\[\Ass N:=\{r\in R \, \mid \, \text{ there exists } m\in N\backslash\{0\} \text{ with } mr=0\}\]
and
\[\Div N:= \{r \in R \, \mid \, r  \not | \, m \; \text{ for some } m\in N\} . \]

\begin{lemma}\label{prime} Let $R$ be a Pr\"ufer domain and $N$ an indecomposable pure injective $R$-module.
Then $\Ass N$ and $\Div N$ and their union $\Ass N\cup\Div N$ are (proper) prime ideals of $R$.
\end{lemma}
\begin{proof}
First we deal with $\Ass N$. It is easily seen that it is closed under multiplication by arbitrary elements of $R$ and excludes the unity 1 of $R$. In order to show closure under addition, we use Fact \ref{pp-uni}
($N$ is pp-uniserial). Hence take $r, r' \in \Ass N$ with corresponding $m, m' \in M \backslash \{ 0 \}$ such that $mr =m'r'=0$. By pp-uniseriality in
$N$ either $\Ker \, r \subseteq \Ker \, r'$ or $\Ker \, r \supseteq \Ker \, r'$. If the latter holds then $m' r=0$ and hence $m(r + r') = 0$. Thus $r+r' \in \Ass N$. The other case is symmetric. Finally let $r, r' \in R$ with $rr' \in \Ass N$. If $m\in N\backslash\{0\}$ and $mrr'=0$ then either $mr=0$ or $mr\neq 0$ and $(mr)r'=0$. Thus $rr'\in\Ass N$ implies $r\in \Ass N$ or $r'\in \Ass N$.

The proof for $\Div N$ is similar. Clearly $\Div N$ is closed under multiplication by elements of $R$ and does not contain 1. Furthermore, if
$r, r' \in \Div N$ the the same is true of $r+r'$. In fact by pp-uniseriality $Nr \supseteq Nr'$ or $Nr \subseteq Nr'$. Assume the latter. Then $N(r + r') \subseteq Nr'$ and any element $m \in N$, $m \notin Nr'$ is also out of $N(r + r')$. Finally let $r, r' \in R$ with $rr' \in \Div N$. Take $m \in N \backslash Nrr'$. If $m \in Nr$, whence $m = nr$ for some $n \in N$, then $n \notin Nr'$.

The set $\Ass N \cup \Div N$ is a prime ideal because when working over a commutative ring, the set of elements that, for some given indecomposable pure injective module $N$, do no act as automorphisms on $N$ is a prime ideal. Clearly $\Ass N \cup \Div N$ exclude 1 and hence is a proper ideal of $R$.
\end{proof}

We now recall the correspondence, over a valuation domain $R$, between ordered pairs of proper ideals of $R$ and indecomposable pp-types in one variable over $R$. The indecomposable pp-type associated to an ordered pair $(I,J)$ of ideals is just the unique complete pp-type $p = p(I, J)$ such that, for all $r \in R$,
\begin{itemize}
\item $xr = 0 \in p$ if and only if $r \in I$ and
\item $r \midd x \in p$ if and only if $r \notin J$,
\end{itemize}
see \cite[3.4]{EkHe}. Note that the consistency conditions required there become vacuous when $R$ is a valuation domain.
Through indecomposable pp-types, pairs of ideals correspond to indecomposable pure injective $R$-modules. The equivalence relation linking two pairs $(I,J)$ and $(K,L)$ if and only if the corresponding indecomposable pure injective $R$-modules realising $p(I,J)$ and $p(K,L)$ are isomorphic is also described in \cite[3.4]{EkHe}.

There is a version of this correspondence for B\'ezout domains described in \cite[Thm. 4.5]{PT15} (see also \cite[$\S$ 4]{LPPT}) but we will not use it in this paper.

\section{Effectively given Pr\"ufer domains}

The decision problem of the theory of modules over a ring $R$ makes sense only when $R$ is {\sl effectively given} (see \cite{PPT} and \cite[Ch. 17]{Preb1}). Let us focus on Pr\"ufer domains and say that such a domain $R$ is  \emph{effectively given} if it is countable and its elements can be listed as $a_0=0, a_1= 1, a_2, \dots$ (possibly with repetitions) so that suitable algorithms effectively execute the following, when $m, n$ range over natural numbers.
\begin{enumerate}
\item Deciding whether $a_m= a_n$ or not.
\item Producing $a_m+ a_n$ and $a_m\cdot a_n$, or rather indices of these elements in the list.
\item Establishing whether $a_m$ divides $a_n$.
\end{enumerate}
The countability assumption on $R$ ensures the countability of the first order language $L_R$. Furthermore if $R$ is written as a list, then each instance in (1)-(3) corresponds to a sentence of $L_R$
of which to check membership to $T_R$
(for instance $a_n = a_m$ holds true if and only if  $\forall x (x a_n = x a_m)
\in T_R$), and hence has to be answered effectively. It is well known that, when $R$ is effectively given, the standard list of axioms of the theory of $R$-modules is recursive, whence $T_R$ is recursively enumerated.

As a consequence of (1)--(3) other familiar procedures can be carried out effectively in a Pr\"ufer domain $R$, such as determining units, calculating additive inverses and (for invertible elements) multiplicative inverses. The same applies to $\gcd$ and $\lcm$, when $R$ is B\'ezout. Over an effectively given Pr\"ufer domain, and with respect to Fact \ref{tuganbaev}, given $a$ and $b$, the corresponding $\alpha$, $r$ and $s$ can again be found by a similar searching procedure. In the worst case, this can be done by a brute force strategy, enumerating all the triples of elements of
$R$ and looking among them for a right one, satisfying the equalities in Fact \ref{tuganbaev}.

Coming back to Fact \ref{2-1} and to the pp-formula $\phi$ in it, the equivalent finite sum of conditions $a\midd x\wg xb=0$, $a, b\in R$, and the equivalent finite conjunction of conditions $c\midd x+ xd=0$, $c, d\in R$, can also be effectively found, and the same is true, in the larger Pr\"ufer setting, of the formulae in Fact \ref{normal}. Once again, this can be done by a brute force procedure, enumerating all the formulae of the given forms implied by $\phi$ in $T_R$, and implying $\phi$ in $T_R$, and looking for the equivalent ones - their existence being guaranteed by the related facts. We will often tacitly use similar arguments in the remainder of this paper.

Recall that a pp-formula $\phi(x)$ in one free variable defines, in every $R$-module $M$, a subgroup $\phi(M)$ called a pp-subgroup of $M$. If $\phi$ and $\psi$ are two such formulae then the corresponding {\sl elementary invariant} $\Inv (M, \phi, \psi)$ is the size of the quotient group $\phi(M) / (\phi(M) \cap \psi(M))$, if finite, and $\infty$ otherwise. These elementary invariants depend only on the elementary equivalence class of $M$, and indeed by the Baur-Monk theorem characterize it -- whence their name. If the residue fields of $R$ are infinite, then it is easily seen that  each elementary invariant is 1 or $\infty$.

It follows  from general theory, basically from the Baur-Monk theorem again (see \cite[Sect. 17]{Preb1} or \cite[Sect. 5]{PPT}), that to prove decidability it suffices to  check the inclusions of the above described basic open sets
$$
(\star) \quad \quad (\phi/\psi)\seq \bigcup_{i=1}^n (\phi_i/\psi_i).
$$

By Lemma \ref{2-3}, over a Pr\"{u}fer domain we may assume that the pp-formulae $\phi$ and $\phi_i$ are of the form $\exists y \, (ya = x \land yb = 0)$ and that the pp-formulae $\psi$ and $\psi_i$ are of the form $c\midd xd$. This is because, as seen in $\S$ 2, every open set $(\sigma / \tau)$ can be (effectively, using $\dagger$) decomposed as a finite union of open sets given by pairs of this kind and hence we may replace both the left hand side and the right hand side of $(\star)$ by a finite union of open sets of the appropriate form. We may further assume that the union on the left consists of a single open set of this kind because the finite union of open sets on the left hand side is contained in the union on the right hand side if and only if each single open set on the left hand side is contained in the union on the right hand side.

Replacing each $\left(\phi_i/\psi_i\right)$ by its representation given by Lemma \ref{extension}, we may assume that the right hand side of $(\star)$ is a finite intersection of finite unions of sets of the form
$ \left( \frac{ a \midd x \, \wedge \, xs=0 }{u \midd x \, + \, xd = 0}
\right)$.
Note that the set $\left( \frac{ x \alpha = 0}{x=0} \right)$ is equal to $\left( \frac{1\midd x \, \wedge \, x \alpha = 0}{0\midd x \, + \, x1=0} \right)$ and the set $\left(\frac{ x=x }{\alpha \midd x } \right)$ is equal to $\left( \frac{1\midd x \, \wedge \, x0=0}{\alpha\midd x \, + \, x1=0} \right)$.

Since $(\phi / \psi)$ is contained in a finite intersection of basic open sets if and only if it is included in each of them, we may assume the right hand side of $(\star)$ is a finite union of sets of the form $ \left( \frac{ a \midd x \, \wedge \, xs=0 }{u \midd x \, + \, xd = 0}
\right)$.

In $\S$ 4 we will further simplify the left hand side of $(\star)$.

Before concluding this section, let us examine how this property of being effectively given is preserved by the Kaplansky-Jaffard-Ohm construction of a B\'ezout domain with a given group of divisibility \cite[Theorem 5.3 p. 113]{F-S}.

Let $\Gamma$ be a lattice ordered abelian group written additively. We say that $\Gamma$ is {\sl effectively given} if its elements can be listed (as for $R$ before) so that suitable algorithms execute the following:
\begin{enumerate}
\item Deciding equality $=$.
\item Calculating the group operation $+$.
\item Calculating the lattice operations $\land$ and $\lor$.
\end{enumerate}
As a consequence the order relation of $\Gamma$ can also be decided.

\begin{prop}\label{group}
Let $\Gamma$ be an effectively given lattice ordered group, $R$ be its associated B\'ezout domain with respect to some effectively given field $K$. Then $R$ can be effectively given.
\end{prop}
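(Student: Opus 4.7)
The plan is to unwind the Kaplansky--Jaffard--Ohm construction of $R$ and verify that each element of $R$ admits a finite coding in terms of elements of $K$ and $\Gamma$, and that the operations required by items (1)--(3) can then be executed using the algorithms supplied by the effective presentations of $K$ and $\Gamma$.

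In this construction, $R$ is realised as a specific subring of the quotient field of the group algebra $K[\Gamma]$, with a natural $\Gamma$-valued value map $v$ singling out $R$ by the condition $v(\cdot)\geq 0$. A non-zero element is represented by a fraction $P/Q$ of finite formal sums $P=\sum_i a_i X^{\gamma_i}$ and $Q=\sum_j b_j X^{\delta_j}$, with $a_i,b_j$ picked from the effective list of $K$ and $\gamma_i,\delta_j$ from the effective list of $\Gamma$. Such fractions form a set effectively enumerable from the enumerations of $K$ and $\Gamma$; the condition $v(P/Q)\geq 0$ is testable through finitely many applications of the lattice operation and the group addition of $\Gamma$, both effective by hypothesis. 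Sieving by this test produces the desired listing $a_0=0,\, a_1=1,\, a_2,\ldots$ for $R$, possibly with repetitions.

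Item (1) goes as follows: two fractions $P_1/Q_1$ and $P_2/Q_2$ represent the same element iff $P_1 Q_2 = P_2 Q_1$ holds in $K[\Gamma]$; expanding the products and collecting monomials with the same exponent---decidable thanks to equality in $\Gamma$---reduces this to finitely many equalities between elements of $K$, themselves decidable. Item (2) uses only the arithmetic of fractions in the group algebra, which calls upon the addition and multiplication of $K$ and the group operation of $\Gamma$. For item (3), divisibility is governed by the value map: $a\mid b$ in $R$ iff $b/a\in R$ iff $v(b)\geq v(a)$ in $\Gamma$, and the two values are computed via finitely many lattice operations and additions in $\Gamma$, after which a single order test in $\Gamma$ answers the question.

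The main delicate point is justifying the reduction of equality in $R$ to cross-multiplication equality in $K[\Gamma]$, which amounts to confirming that the ambient ring is the quotient field of $K[\Gamma]$ and that $v$ behaves correctly on products (so that the subring carved out by $v\geq 0$ is genuinely $R$). This is an inspection of \cite[Theorem 5.3, p. 113]{F-S} rather than a deep new argument, after which the remaining verifications are routine bookkeeping yielding the required algorithms for (1)--(3).
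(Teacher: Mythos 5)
Your proposal is correct and matches the paper's proof in essence: both unwind the Kaplansky--Jaffard--Ohm construction, enumerate elements as fractions from the group ring $K[\Gamma]$, sieve by the condition that the content (your value map $v$) is nonnegative, decide equality by reducing to equality in $K[\Gamma]$, compute ring operations via fraction arithmetic, and test divisibility by comparing contents in $\Gamma$. The only slight difference is presentational: you flag explicitly the need to verify multiplicativity of the content so that it is well defined on the quotient field, whereas the paper takes this for granted as part of the KJO theory cited from Fuchs--Salce.
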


\begin{proof}
We follow the Kaplansky-Jaffard-Ohm construction as explained in \cite[Theorem 3.5]{F-S}.

We start building the group ring $K[\Gamma]$ - a domain. Its non-zero elements can be represented as finite formal sums $\xi = \sum_{i \leq t} k_i X^{\gamma_i}$ where $t$ is a non-negative integer, $X$ is an indeterminate, the $k_i$ are non-zero elements of $K$ and the $\gamma_i$ are (finitely many) pairwise different elements of $\Gamma$. The representation is unique up to the order of the $\gamma_i$. The ring operations are the trivial ones. For instance, the product of two non-zero elements  $\xi = \sum_{i \leq t} k_i X^{\gamma_i}$ and $\epsilon = \sum_{j \leq s} h_j X^{\delta_j}$ of $K[\Gamma]$ is $\sum_l (\sum_{\gamma_i + \delta_j = \epsilon_l} k_i h_j) X^{\epsilon_l}$, where the $\epsilon_l$'s range among the elements of $\Gamma$ that can be obtained as sums of some $\gamma_i$ and some $\delta_j$ and the $l$'s index them. Thus the non-zero elements of $K [\Gamma]$ can be recursively listed on the basis of the corresponding lists of $K$ and $\Gamma$. It suffices to enumerate the finite subsets of $\Gamma$ and then the functions from these sets to $K \backslash \{ 0 \}$. Moreover equality can be effectively decided, and the ring operations can be effectively calculated. Actually the content of a non-zero element $\xi$, that is, the lattice meet of its $\gamma_i$, written $c(\xi)$, can be also computed. Incidentally, $K[\Gamma]$ itself can be effectively given, namely divisibility can be decided, too.

Next we form the field of fractions $Q$ of $K[\Gamma]$. Clearly it is effectively given - just apply the usual rules determining equality between quotients and calculating their operations (including division). In this case divisibility is trivially checked. Furthermore the content $c$, as extended from $K(\Gamma)$ to $Q$, that is, by putting, for every $\xi$, $\xi' \in K[\Gamma]$ with $\xi' \neq 0$, $c(\xi \xi'^{-1}) = c(\xi) - c(\xi')$, can be effectively calculated, too.

Now $R$ is introduced as the subring of $Q$ consisting of the elements $\alpha$ for which $c(\alpha) \geq 0_\Gamma$ (the zero element of $\Gamma$). As the content can be effectively computed in $Q$, a list of the elements of $R$ can be extracted from that of $Q$. Equality can be decided and ring operations can be calculated, again because $R$ is a subring of $Q$. To check divisibility between two non-zero elements $\alpha$ and $\alpha'$ of $R$, just calculate their quotient in $Q$ and, looking at its content, check whether it belongs to $R$ or not.
\end{proof}

\section{Basic open sets}

We prepare here the main theorem, that will be stated and proved in $\S$ 6. In particular we simplify the structure of  pp-formulae in $(\star)$. Our arguments will mainly rely on Tuganbaev's result in Fact \ref{tuganbaev} and pp-uniseriality of indecomposable pure injective modules over a Pr\"ufer domain (Fact
\ref{pp-uni}).

Lemma \ref{extension} (and Fact \ref{2-1} over B\'ezout domains) have already produced pp-formulae of a somewhat elementary form. In particular we have seen that we can restrict to basic opens sets  $(a \midd x \wg xb = 0 \, / \, c \midd x + xd = 0)$ on the right hand side of $(\star)$. On the other hand, the left side of $(\star)$ contains finite intersections of these sets, rather than a single one of them. We want to improve this point, and reach a simpler representation of the involved open sets.

The next lemma contributes to the latter objective.

\begin{lemma}\label{decomp}
Let $R$ be a Pr\"ufer domain, $\phi', \phi'', \psi', \psi''$ be pp-formulae of $L_R$ in one free variable. Then
$$ ( \phi' \wedge \phi''  \, / \, \psi' + \psi'') =
(\phi' \, / \, \psi') \cap (\phi' \, / \, \psi'') \cap (\phi'' \, / \, \psi') \cap(\phi'' \, / \, \psi'').
$$
\end{lemma}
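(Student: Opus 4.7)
The plan is to unwind the definition of the basic open sets in $\Zg_R$ and prove each inclusion separately. Recall that $N \in (\phi/\psi)$ means $\phi(N) \not\subseteq \psi(N)$, equivalently that some $m \in N$ lies in $\phi(N) \setminus \psi(N)$. The forward inclusion holds over any ring, while the reverse inclusion is where pp-uniseriality of indecomposable pure injectives over the Prüfer domain $R$ (as recalled from \cite[Prop. 11.22]{Punb}) genuinely enters.

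For the forward inclusion $\subseteq$, suppose $N \in (\phi' \wedge \phi'' \, / \, \psi' + \psi'')$ and pick $m \in N$ with $m \in \phi'(N) \cap \phi''(N)$ and $m \notin \psi'(N) + \psi''(N)$. Since $\psi'(N), \psi''(N) \subseteq \psi'(N) + \psi''(N)$, the element $m$ lies in neither $\psi'(N)$ nor $\psi''(N)$. This single witness $m$ therefore opens each of the four basic open sets $(\phi_i / \psi_j)$ with $i,j \in \{\,',\,''\,\}$, so $N$ belongs to their intersection.

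For the reverse inclusion $\supseteq$, assume $N$ lies in each of the four basic open sets, so each $\phi_i(N) \not\subseteq \psi_j(N)$. By pp-uniseriality of $N$, the four pp-definable subgroups $\phi'(N)$, $\phi''(N)$, $\psi'(N)$, $\psi''(N)$ are linearly ordered by inclusion. In particular $\phi'(N) \cap \phi''(N) \in \{\phi'(N), \phi''(N)\}$ and $\psi'(N) + \psi''(N) \in \{\psi'(N), \psi''(N)\}$. Whichever of the four possible combinations occurs, say the intersection equals $\phi_{i_0}(N)$ and the sum equals $\psi_{j_0}(N)$, the hypothesis $\phi_{i_0}(N) \not\subseteq \psi_{j_0}(N)$ is exactly one of the four assumed non-inclusions, and delivers the desired witness showing $N \in (\phi' \wedge \phi'' \, / \, \psi' + \psi'')$.

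The only substantive ingredient is the appeal to pp-uniseriality in the reverse direction; without the chain property one could have $\phi'(N) \cap \phi''(N)$ strictly smaller than both $\phi_i(N)$ and $\psi'(N) + \psi''(N)$ strictly larger than both $\psi_j(N)$, in which case none of the four hypotheses would suffice. Once pp-uniseriality is in hand the argument reduces to a short case analysis on which of the four subgroups is maximal/minimal, and no further ring-theoretic input (for instance Fact \ref{tuganbaev}) is required.
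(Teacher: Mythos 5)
Your proof is correct and follows essentially the same route as the paper's: the forward inclusion is the easy one (a single witness $m$ works for all four sets), and the reverse inclusion uses pp-uniseriality to identify $\phi'(N) \cap \phi''(N)$ with one of $\phi'(N), \phi''(N)$ and $\psi'(N) + \psi''(N)$ with one of $\psi'(N), \psi''(N)$. You simply spell out the details that the paper compresses into one sentence.
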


\begin{proof}
The inclusion of the left side into the right one is clear. On the other hand, for every $N \in \Zg_R$, by the pp-uniseriality of $N$, $\phi'(N) \cap \phi''(N)$ equals either $\phi'(N)$ or $\phi''(N)$, and similarly $\psi'(N) + \psi''(N)$ coincides with either $\psi'(N)$ or $\psi''(N)$, which proves the inverse inclusion.
\end{proof}

As a consequence:

\begin{cor}\label{decomp1}
Let $R$ be a Pr\"ufer domain. For every $a, b, c, d \in R$,
$$
(a \midd x \wedge xb = 0 \, / \, c \midd x + xd = 0) = (a \midd x / c \midd x) \cap (a \midd x / xd = 0) \cap (xb = 0 / c \midd x) \cap (xb = 0 / xd = 0).
$$
\end{cor}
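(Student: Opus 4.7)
The plan is to derive this as an immediate specialization of Lemma \ref{decomp}. I would choose the four pp-formulae $\phi' \doteq a \mid x$, $\phi'' \doteq xb = 0$, $\psi' \doteq c \mid x$, $\psi'' \doteq xd = 0$. With these choices, the conjunction $\phi' \wedge \phi''$ is precisely the antecedent $a \mid x \wedge xb = 0$ of the pair on the left-hand side of the claimed identity, and the sum $\psi' + \psi''$ is precisely the consequent $c \mid x + xd = 0$. Applying Lemma \ref{decomp} to these formulae then produces the intersection
\[(a \mid x / c \mid x) \cap (a \mid x / xd = 0) \cap (xb = 0 / c \mid x) \cap (xb = 0 / xd = 0),\]
which is exactly the right-hand side of the corollary.

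There is no real obstacle: the substance of the argument, namely pp-uniseriality of every $N \in \Zg_R$ over a Pr\"ufer domain, has already been absorbed into Lemma \ref{decomp}. The role of the corollary is purely to record the version of that decomposition most relevant to the B\'ezout setting of Lemma \ref{uniserial} and Corollary \ref{2-3}, where the basic open sets one ultimately needs to analyse in $(\star)$ all have antecedent of the form $a \mid x \wedge xb = 0$ and consequent of the form $c \mid x + xd = 0$. The corollary thus reduces the study of such pairs to the four \emph{simpler} types $(a \mid x / c \mid x)$, $(a \mid x / xd = 0)$, $(xb = 0 / c \mid x)$ and $(xb = 0 / xd = 0)$, each of which involves only one divisibility and one annihilator formula.

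Consequently, the write-up I would give is essentially one sentence: instantiate Lemma \ref{decomp} with $\phi' \doteq a\mid x$, $\phi'' \doteq xb=0$, $\psi' \doteq c\mid x$, $\psi'' \doteq xd = 0$ and observe that $\phi'\wedge\phi''$ and $\psi'+\psi''$ coincide verbatim with the formulae appearing in the pair on the left of the stated equality.
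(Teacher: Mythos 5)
Your proposal is correct and matches the paper exactly: the paper presents Corollary \ref{decomp1} with no separate proof, introducing it with ``As a consequence'' immediately after Lemma \ref{decomp}, so the intended argument is precisely the instantiation $\phi' \doteq a \mid x$, $\phi'' \doteq xb = 0$, $\psi' \doteq c \mid x$, $\psi'' \doteq xd = 0$ that you give.
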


Thus, by proceeding as in $\S$ 3 we can assume that, in the basic open sets $(\phi_i / \psi_i)$ ($1 \leq i \leq n$) of the right side of $(\star)$, $\phi_i$ is either $a_i \midd x$ or $x b_i = 0$ and $\psi_i$ is either $c_i \midd x$ or $x d_i = 0$.




Now let us deal with the left side and with finite intersections. The following lemma applies to this setting.

\begin{lemma}\label{reduction}
Let $\mathcal{W}, \mathcal{U}_i$ ($1\leq i\leq n$) be open sets of $\Zg_R$, $\phi, \phi',\phi'',\psi, \psi', \psi''$ be pp-formulae (in one free variable). Then
\begin{enumerate}
\item $(\phi' \wedge \phi'' / \psi) \cap \mathcal{W} \subseteq \bigcup_{i=1}^n\mathcal{U}_i \quad$  if and only if $\quad (\phi' / \psi) \cap \mathcal{W} \subseteq (\phi' / \phi'') \cup\bigcup_{i=1}^n\mathcal{U}_i \quad$
and $\quad (\phi'' / \psi) \cap \mathcal{W} \subseteq  (\phi'' / \phi') \cup\bigcup_{i=1}^n\mathcal{U}_i$,
\item $( \phi / \psi' + \psi'') \cap \mathcal{W} \subseteq \bigcup_{i=1}^n\mathcal{U}_i \quad$ if and only if $\quad ( \phi / \psi') \cap \mathcal{W}\subseteq ( \psi'' / \psi') \cup \bigcup_{i=1}^n\mathcal{U}_i \quad$ and $\quad ( \phi / \psi'') \cap \mathcal{W} \subseteq  (\psi' / \psi'') \cup \bigcup_{i=1}^n\mathcal{U}_i$.
\end{enumerate}
\end{lemma}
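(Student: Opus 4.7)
My plan is to exploit pp-uniseriality of points of $\Zg_R$ (recalled just before Lemma~\ref{uniserial}): for every $N\in\Zg_R$ and any two pp-formulae in one variable, one of $\phi'(N),\phi''(N)$ contains the other, and similarly for $\psi'(N),\psi''(N)$. Consequently $(\phi'\wedge\phi'')(N)$ is the smaller of $\phi'(N)$ and $\phi''(N)$, while $(\psi'+\psi'')(N)$ is the larger of $\psi'(N)$ and $\psi''(N)$. This is exactly the mechanism already used in Lemma~\ref{decomp}, and it reduces both equivalences to routine case splits. I will only write out part~(1), since part~(2) is dual (swap $\phi$'s for $\psi$'s, reverse inclusions, replace $\wedge$ by $+$).

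For the ``$\Leftarrow$'' direction of (1), I take $N\in(\phi'\wedge\phi''/\psi)\cap\mathcal{W}$. Since $(\phi'\wedge\phi'')(N)\subseteq\phi'(N)$, we get $\phi'(N)\not\subseteq\psi(N)$, i.e.\ $N\in(\phi'/\psi)\cap\mathcal{W}$. By the first assumed inclusion, either $N\in\bigcup_i\mathcal{U}_i$ (done) or $N\in(\phi'/\phi'')$, meaning $\phi'(N)\not\subseteq\phi''(N)$. By pp-uniseriality the alternative $\phi''(N)\subsetneq\phi'(N)$ holds, so $(\phi'\wedge\phi'')(N)=\phi''(N)$; in particular $\phi''(N)\not\subseteq\psi(N)$, i.e.\ $N\in(\phi''/\psi)\cap\mathcal{W}$. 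The second assumed inclusion then puts $N$ in $(\phi''/\phi')\cup\bigcup_i\mathcal{U}_i$, and since $\phi''(N)\subseteq\phi'(N)$ it cannot land in $(\phi''/\phi')$, so $N\in\bigcup_i\mathcal{U}_i$ as required.

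For ``$\Rightarrow$'', I take $N\in(\phi'/\psi)\cap\mathcal{W}$ and assume $N\notin(\phi'/\phi'')$, i.e.\ $\phi'(N)\subseteq\phi''(N)$; then $(\phi'\wedge\phi'')(N)=\phi'(N)\not\subseteq\psi(N)$, so $N\in(\phi'\wedge\phi''/\psi)\cap\mathcal{W}\subseteq\bigcup_i\mathcal{U}_i$. This gives the first inclusion; the second follows by swapping the roles of $\phi'$ and $\phi''$.

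Part~(2) goes the same way with $\psi'+\psi''$: a point $N\in(\phi/\psi'+\psi'')\cap\mathcal{W}$ automatically lies in $(\phi/\psi')\cap\mathcal{W}$ because $\psi'(N)\subseteq\psi'(N)+\psi''(N)$, and the pp-uniseriality dichotomy lets one identify $\psi'(N)+\psi''(N)$ with whichever of the two is larger, replacing the role played above by $(\phi''/\phi')$ with $(\psi'/\psi'')$ (and $(\phi'/\phi'')$ with $(\psi''/\psi')$). There is no serious obstacle: the only thing to be careful about is the bookkeeping of which auxiliary pair $(\phi'/\phi'')$ or $(\phi''/\phi')$ (respectively $(\psi'/\psi'')$ or $(\psi''/\psi')$) gets absorbed on each side, and this is dictated precisely by which of the two pp-subgroups is the smaller (resp.\ larger) one at $N$.
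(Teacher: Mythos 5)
Your argument is correct and uses the same core mechanism as the paper's proof, namely pp-uniseriality of points of $\Zg_R$ combined with the observation (as in Lemma~\ref{decomp}) that $(\phi'\wedge\phi'')(N)$ is the smaller of $\phi'(N),\phi''(N)$ and $(\psi'+\psi'')(N)$ the larger of $\psi'(N),\psi''(N)$. The only differences are presentational: you argue the $\Rightarrow$ direction by contraposition and apply the two hypothesised inclusions sequentially in the $\Leftarrow$ direction, whereas the paper runs a direct case split on which pp-subgroup is larger, but the substance is identical.
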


\begin{proof}
By Lemma \ref{decomp} $(\phi' \wedge \phi'' \, / \, \psi) = (\phi' \, / \, \psi) \cap (\phi'' \, / \, \psi)$ and $(\phi \, / \, \psi' + \psi'') = ( \phi \, / \, \psi') \cap (\phi \, / \, \psi'')$. That said, let us deal with (2), as (1) can be handled by similar arguments.

$( \Rightarrow)$ Suppose that $N \in (\phi / \psi')$. Since $N$ is pp-uniserial, either $\psi''(N) \subseteq \psi'(N)$ or $\psi'(N) \subset \psi''(N)$. In the former case $\psi'(N) + \psi''(N) = \psi'(N)$, whence $N \in (\phi / \psi' + \psi'')$ and consequently $N \in \bigcup_{i=1}^n\mathcal{U}_i$. In the latter case $N \in (\psi'' / \psi')$. Hence $N$ is always in the left side union.

The second condition follows symmetrically.

$(\Leftarrow)$ Suppose now $N \in (\phi / \psi') \cap (\phi / \psi'')$. By pp-uniseriality again, either $\psi'(N) \subseteq \psi''(N)$ or $\psi''(N) \subseteq \psi'(N)$. Thus either $N \notin (\psi''/\psi')$ or $N \notin (\psi' / \psi'')$. In either case $N \in \bigcup_{i=1}^n\mathcal{U}_i$.
\end{proof}

Thanks to these reductions, combined with Lemma \ref{extension}, it is enough for our purposes to effectively check, given basic open sets $\mathcal{U}_i = (\phi_i / \psi_i)$ ($1\leq i\leq n$), whether
\[\left(\frac{ \phi }{\psi}
\right)
\cap
\left[
\left(\frac{ x \alpha = 0}{x=0} \right)
\cup
\left(\frac{ x=x }{\alpha \midd x } \right)
\right]
\cap
\left[
\left(\frac{ x \delta = 0}{x=0} \right)
\cup
\left(\frac{ x=x }{\delta \midd x } \right)\right]\subseteq \bigcup_{i=1}^n\mathcal{U}_i\]
where $\phi$ and $\psi$ are either of the form $a\midd x$ or $xb=0$.

Let us examine the various open sets arising in this way as $(\phi / \psi)$ (but also as $(\phi_i / \psi_i)$). It is here that Fact \ref{tuganbaev} is useful.

\begin{lemma}\label{divis}
Let $R$ be a Pr\"{u}fer domain. Let $a, c, \alpha, r, s \in R$, $a \alpha = cr $ and $c(\alpha - 1)= as$. Then
\[\left(a|x / c|x \right)=\left(x=x/ s | x\right)\cap\left(x=x / xa=0\right)\cap\left(x=x/\alpha|x\right).\]
\end{lemma}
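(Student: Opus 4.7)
The plan is to work pointwise on $\Zg_R$, using that each indecomposable pure injective $N$ has local endomorphism ring and pp-uniserial pp-definable subgroups. Throughout, I abbreviate $\ann_N(a):=\{m\in N : ma=0\}$.

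\medskip

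\emph{Forward inclusion.} Suppose $N\in (a\mid x\,/\,c\mid x)$, so $aN\not\subseteq cN$. I would rule out each of the three "bad" cases:
\begin{itemize}
\item If $a$ annihilates $N$, then $aN=0\subseteq cN$, contradiction; so $N\in (x=x\,/\,xa=0)$.
\item If $\alpha$ acts surjectively on $N$, then (since $\End N$ is local) $\alpha$ acts as an automorphism, and from $a\alpha=cr$ one gets $aN=a\alpha N=crN\subseteq cN$, contradiction; so $N\in(x=x\,/\,\alpha\mid x)$.
\item If $s$ acts surjectively, then by the same token $s$ is invertible on $N$, and from $c(\alpha-1)=as$ we get $aN=asN=c(\alpha-1)N\subseteq cN$, contradiction; so $N\in(x=x\,/\,s\mid x)$.
\end{itemize}

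\medskip

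\emph{Reverse inclusion.} Suppose $N$ lies in all three sets on the right, i.e.\ $sN\neq N$, $\alpha N\neq N$ and $\ann_N(a)\neq N$. Because $\End N$ is local, neither $\alpha$ nor $s$ acts invertibly on $N$, and in particular $1-\alpha$ (hence $\alpha-1$) acts invertibly on $N$. From $c(\alpha-1)=as$ I therefore get
\[ cN \;=\; c(\alpha-1)N \;=\; asN. \]
Now argue by contradiction: assume $aN\subseteq cN=asN$. Then for each $n\in N$ there is $n'\in N$ with $an=asn'$, hence $a(n-sn')=0$, i.e.\ $n-sn'\in\ann_N(a)$, and consequently
\[ N \;=\; sN+\ann_N(a). \]
This is the crux of the argument, and where pp-uniseriality does the work: the two subgroups $sN$ and $\ann_N(a)$ are both pp-definable, so by pp-uniseriality of $N$ they are comparable. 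Whichever inclusion holds, their sum equals the larger one, which by hypothesis is a proper subgroup of $N$. This contradicts the displayed equality, so $aN\not\subseteq cN$ and $N\in(a\mid x\,/\,c\mid x)$.

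\medskip

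\emph{Anticipated difficulty.} The two directions are essentially symmetric bookkeeping once one identifies the right consequences of $a\alpha=cr$ and $c(\alpha-1)=as$; the only non-routine point is the final contradiction, which requires noticing that the hypothesis $aN\subseteq asN$ forces the decomposition $N=sN+\ann_N(a)$ and then invoking pp-uniseriality to collapse this sum. Everything else reduces to straightforward manipulations with the two Tuganbaev identities and the local/not-a-unit dichotomy in $\End N$.
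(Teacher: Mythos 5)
Your proof is correct and uses essentially the same ingredients as the paper's: the two Tuganbaev identities, locality of $\End N$ to get $\alpha-1$ acting invertibly, and pp-uniseriality applied to the pair $Ns$, $\ann_N(a)$. The paper phrases the reverse direction as a case split ($Ns\subseteq\Ker a$ versus $\Ker a\subseteq Ns$) rather than deriving $N=Ns+\ann_N(a)$ and collapsing the sum, but this is only a cosmetic reorganization.

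One small remark on the forward direction: the parenthetical ``(since $\End N$ is local) $\alpha$ acts as an automorphism'' is not actually true. Surjectivity of a scalar action on an indecomposable pure injective need not imply injectivity --- for instance $p$ acts surjectively but not injectively on $\mathbb{Z}_{p^\infty}$, whose endomorphism ring $\widehat{\mathbb{Z}}_p$ is local. Fortunately your computation never uses injectivity: $a\alpha N = a(\alpha N) = aN$ and $sN=N\Rightarrow asN=aN$ are exactly the equalities you invoke, and both follow from surjectivity alone. So the argument stands; just drop the automorphism claim. (The converse, which you use in the reverse direction --- non-surjectivity of $\alpha$ implies $\alpha$ is a non-unit of $\End N$, hence $1-\alpha$ is a unit --- is of course fine.)
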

\begin{proof}
Suppose that $N\in\left(a|x / c|x \right)$. Then there exists $m,n\in N\backslash\{0\}$ such that $m=na$ and $c$ does not divide $m$. In particular $N\in \left(x=x / xa=0 \right)$. Moreover, if $n=n' s$, then $m=ab =n'as =n'(\alpha - 1) c$, which contradicts the assumption that $c$ does not divide $m$. Thus $N\in \left(x=x / s|x\right)$. Similarly, if $n=n' \alpha$ then $m=na \alpha=ncr $, which again contradicts the assumption that $c$ does not divide $m$. Thus $ N\in \left(x=x/\alpha|x\right)$. Consequently \[N\in \left(x=x / s|x\right)\cap\left(x=x/xa=0\right)\cap\left(x=x/\alpha|x\right).\]

Conversely suppose that $N\in \left(x=x/ s | x \right)\cap\left(x=x/xa = 0\right)\cap\left(x=x / \alpha|x\right)$. Let $\mathfrak{m}$ be a maximal ideal such that $N$ is an $R_{\mathfrak{m}}$-module. Since $N\in \left(x=x / \alpha | x \right)$, $\alpha\in\mathfrak{m}$. Thus $\alpha - 1 \notin \mathfrak{m}$.

Since $N$ is pp-uniserial either $N s \subseteq \Ker \, a$ or $\Ker \, a \subseteq Ns$.

If $N s \subseteq \Ker \, a$ then $m s a=0$ for all $m\in N$. Thus $mc(\alpha -1 )=0$ for all $m\in N$. Since $\alpha - 1$ acts invertibly on $N$, $mc=0$ for all $m\in N$. On the other hand, since $N \in \left(x=x/xa=0\right)$, there is some $m \in N$ for which $ma \neq 0$. Thus $N\in\left(a|x / c|x \right)$.

Suppose $\Ker \, a \subseteq N s$. If $m,m'\in N$ and $ma=m'c$ then $ma(\alpha -1)=m'c (\alpha-1) = m' a s$. So $(m(\alpha - 1)-m' s) a =0$, that is, $m(\alpha - 1) - m' s$ is in $\Ker \, a$ and consequently both $m(\alpha - 1) - m' s$ and $m(\alpha - 1)$ itself are in $Ns$. Thus even $m(\alpha - 1)$ is in $Ns$. Since $\alpha -1$ acts invertibly, $s | m$. But by assumption $N \neq Ns$. Thus there exists some $m\in N$ such that $a | m$ but $c$ does not divide $m$.
\end{proof}

\begin{cor}\label{dual}
Let $R$ be a Pr\"{u}fer domain. Let $b, d, \alpha, r, s \in R$, $d\alpha=b r$ and $b(1-\alpha)=d s$. Then
\[\left(xb=0/xd=0\right)=\left(x s =0/x=0\right)\cap\left(d|x / x=0\right)\cap\left(x\alpha=0/x=0\right)=\]
\[= \left(x s =0/x=0\right)\cap\left(x=x / xd=0\right)\cap\left(x\alpha=0/x=0\right)\]

\end{cor}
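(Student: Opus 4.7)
I would prove Corollary \ref{dual} by directly mimicking the proof of Lemma \ref{divis}, exploiting the formal duality between divisibility and annihilator formulas. First, I would dispose of the second equality by observing that $(d\mid x / x=0)=(x=x / xd=0)$ holds for any indecomposable pure injective $N$: the first set is characterized by $Nd\neq 0$, and the second by $\ker_N(d)\neq N$, which are the same condition.

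For the main equality, the forward direction ought to be a one-witness calculation. Given $N\in(xb=0/xd=0)$, take $m\in N$ with $mb=0$ and $md\neq 0$, and show that $md$ witnesses all three open sets on the right. Explicitly, $md\neq 0$ gives $N\in(d\mid x / x=0)$; the relation $d\alpha=br$ yields $(md)\alpha=m(br)=(mb)r=0$, so $md$ witnesses $(x\alpha=0/x=0)$; and the relation $b(1-\alpha)=ds$ yields $(md)s=m(ds)=m\bigl(b(1-\alpha)\bigr)=(mb)(1-\alpha)=0$, so $md$ witnesses $(xs=0/x=0)$.

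For the converse I would use the locality of $\End N$ together with pp-uniseriality, as in Lemma \ref{divis}. From $N\in(x\alpha=0/x=0)$ the element $\alpha$ has nonzero kernel, hence does not act invertibly; therefore $\alpha$ lies in the unique prime ideal $\Ass N\cup\Div N$ of non-automorphisms (Lemma \ref{prime} and the remark after it), so $1-\alpha$ acts invertibly on $N$. Now the hypotheses give $\ker_N(s)\neq 0$ and $Nd\neq 0$. By pp-uniseriality, the pp-definable submodules $\ker_N(s)$ and $Nd$ are comparable, so $\ker_N(s)\cap Nd$ equals the smaller of the two and is in particular nonzero. Pick $0\neq n=md\in\ker_N(s)\cap Nd$; then $md\neq 0$ and $mds=0$, so $m\cdot b(1-\alpha)=m(ds)=0$, whence $mb=0$ by invertibility of $1-\alpha$. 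This $m$ witnesses $N\in(xb=0/xd=0)$.

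The main conceptual obstacle, as in Lemma \ref{divis}, is making sure that the comparability step is stated for the right pair of pp-subgroups: one has to realize that $(xb=0/xd=0)$ is precisely asking for a nonzero element of $Nd\cap\ker_N(s)$ (once $1-\alpha$ has been inverted using $b(1-\alpha)=ds$). Once this reformulation is spotted, pp-uniseriality of $N$ immediately forces the intersection to be nontrivial, and the rest is bookkeeping using the two Tuganbaev identities $d\alpha=br$ and $b(1-\alpha)=ds$.
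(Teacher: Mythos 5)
Your proof is correct, but it takes a genuinely different route from the paper's. The paper derives Corollary \ref{dual} from Lemma \ref{divis} via Herzog's duality theorem: the standard duality $D$ between left and right pp-formulae induces an isomorphism of the lattices of open sets of $\Zg_R$ and $_R\Zg$ sending $(\phi/\psi)$ to $(D\psi/D\phi)$, so one applies Lemma \ref{divis} to left modules with $a,c$ replaced by $d,b$ and then dualizes, using $D(t\mid x) = (xt=0)$ and $D(tx=0) = (t\mid x)$. You instead give a direct, self-contained argument. Your forward direction is a clean one-witness computation: if $mb=0$ and $md\neq 0$ then $md$ simultaneously opens all three right-hand sets via the two identities $d\alpha=br$ and $b(1-\alpha)=ds$. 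Your converse direction is actually tighter than a mechanical dualization of the proof of Lemma \ref{divis} would be: instead of the two-case split on whether $Ns\subseteq\Ker a$ or $\Ker a\subseteq Ns$, you extract the reformulation that, once $1-\alpha$ acts invertibly, $N\in(xb=0/xd=0)$ is equivalent to $Nd\cap\Ker_N(s)\neq 0$, and this intersection is automatically nonzero because $Nd$ and $\Ker_N(s)$ are comparable pp-submodules (pp-uniseriality) both nonzero by hypothesis. The duality route buys economy — one proof serves both Lemma \ref{divis} and this corollary — while your route buys self-containment, since it does not invoke Herzog's nontrivial theorem, and it streamlines the case analysis.
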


\begin{proof}
Herzog showed in \cite{Hr} that the standard duality $D$ between the lattices of left and right pp-formulae first defined by Prest (see \cite[8.4]{Preb1}) induces an isomorphism between the lattice of open sets of $\Zg_R$ and that of the left Ziegler spectrum $_R\Zg$ by sending a basic open set $(\phi / \psi)$ to $(D \psi / D \phi)$. Replacing $a, c$ by $d, b$ in the previous lemma (applied to left modules), we get
\[\left(d|x / b|x \right)=\left(x=x/ s | x\right)\cap\left(x=x / dx =0\right)\cap\left(x=x/\alpha|x\right).\]
Since for every $t \in R$, $D(t \midd x)$ is $xt = 0$ and $D(tx = 0)$ is $t \midd x$, we deduce
\[\left(xb=0 / xd= 0 \right)=\left(xs = 0/ x = 0 \right)\cap\left(d \midd x / x=0\right)\cap\left(x\alpha = 0 / x = 0 \right).\]
Observe that $(d \midd x / x = 0) = (x = x / xd = 0)$, since in both cases an indecomposable pure injective $N$ in the corresponding open set is asked to contain an element $m$ with $md \neq 0$.\end{proof}

The next lemma provides a sort of generalization of the final claim of the previous proof.

\begin{lemma}
Let $R$ be a Pr\"ufer domain (actually any commutative ring), and $a, d\in R$. Then
\[\left(a|x/xd=0\right)=\left(x=x/xad=0\right).\]
\end{lemma}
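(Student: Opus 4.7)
The plan is to unwind the definitions of the two basic open sets on each side and observe that they correspond to the same condition on elements of an indecomposable pure injective $N$. Recall that for pp-formulas $\phi,\psi$, membership $N\in(\phi/\psi)$ means $\phi(N)\not\subseteq\psi(N)$, i.e.\ there exists $m\in N$ with $N\models\phi(m)\wedge\neg\psi(m)$.

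First I would translate the left-hand side: $N\in(a\mid x/xd=0)$ iff there exists $m\in N$ with $a\mid m$ and $md\neq 0$. Writing $m=na$ for some $n\in N$, this is equivalent to the existence of $n\in N$ with $nad\neq 0$. Next, the right-hand side: $N\in(x=x/xad=0)$ iff there exists $n\in N$ with $nad\neq 0$ (since $x=x$ is trivially satisfied by every element). The two conditions visibly coincide, giving the desired equality.

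More formally, for the inclusion $(a\mid x/xd=0)\subseteq(x=x/xad=0)$, take $m\in N$ witnessing the left side and write $m=na$; then $n$ satisfies $n\cdot ad=md\neq 0$, so $n$ witnesses the right side. Conversely, given $n\in N$ with $nad\neq 0$, set $m:=na$; then $a\mid m$ and $md=nad\neq 0$, so $m$ witnesses the left side.

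There is no real obstacle here: the statement holds for any commutative ring and does not require pp-uniseriality, Tuganbaev's lemma, or any of the structural theory developed earlier. It is a purely formal rewriting of the pp-pair using the fact that divisibility by $a$ on the left of the pair can be absorbed into the annihilator condition on the right.
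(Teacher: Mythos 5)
Your proof is correct and is essentially the same argument as the paper's one-line proof: the paper notes that $N\notin(a\mid x/xd=0)$ iff the implication $a\mid x\to xd=0$ holds in $N$ iff $ad\in\ann_R N$ iff $N\notin(x=x/xad=0)$, which is exactly the contrapositive of your existential formulation. You have simply spelled out the definitional unwinding in full.
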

\begin{proof}
Note $ad\in\ann_R N$ if and only if $a|x$ implies $xd=0$ in $N$.
\end{proof}


\begin{lemma}\label{intersection}
Let $R$ be a Pr\"ufer domain, $\theta_j$ pp-formulae ($j \leq m$) and $\mathcal{W},\mathcal{U}_i$ open sets ($1\leq i \leq n$). Then
\begin{enumerate}
\item $\bigcap_{j \leq m} \left( x=x / \theta_j \right) \cap \mathcal{W}\subseteq \bigcup_{i=1}^n \mathcal{U}_i \quad$ if and only if
$\quad \left(x=x / \theta_j \right) \cap \mathcal{W} \subseteq \bigcup_{k \neq j} \left(\theta_k / \theta_j \right) \cup \bigcup_{i=1}^n \mathcal{U}_i$ for all $j \leq m$,
\item $\bigcap_{j \leq m} \left(\theta_j / x=0 \right) \cap \mathcal{W}\subseteq \bigcup_{i=1}^n \mathcal{U}_j \quad$ if and only if
$\quad \left(\theta_j / x=0 \right) \cap \mathcal{W} \subseteq \bigcup_{k \neq j} \left(\theta_j / \theta_k \right) \cup \bigcup_{i=1}^n\mathcal{U}_i$ for all $j \leq m$.
\end{enumerate}
\end{lemma}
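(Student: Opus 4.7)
The plan is to mimic the argument of Lemma \ref{reduction} by using pp-uniseriality of indecomposable pure injective modules $N$ over a Prüfer domain, which forces the finite collection of pp-subgroups $\{\theta_j(N):j\le m\}$ to be totally ordered by inclusion. The two statements are formally dual, so I will focus on (1) and then indicate the needed adjustment for (2).

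For the forward direction of (1), fix $j\le m$ and take $N\in(x=x/\theta_j)\cap\mathcal{W}$. If in addition $N\notin(\theta_k/\theta_j)$ for all $k\neq j$, then $\theta_k(N)\subseteq\theta_j(N)\subsetneq N$ for every $k$, so $N$ lies in $\bigcap_{j\le m}(x=x/\theta_j)\cap\mathcal{W}$ and hence, by the hypothesis, in $\bigcup_{i=1}^n\mathcal{U}_i$. Thus in every case $N$ is contained in $\bigcup_{k\ne j}(\theta_k/\theta_j)\cup\bigcup_i\mathcal{U}_i$, which is what the forward direction demands.

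The reverse direction is the main content, and this is where uniseriality does the real work. Given $N\in\bigcap_{j\le m}(x=x/\theta_j)\cap\mathcal{W}$, the subgroups $\theta_1(N),\dots,\theta_m(N)$ form a finite chain in $N$, so there exists an index $j_0$ with $\theta_k(N)\subseteq\theta_{j_0}(N)$ for every $k\le m$. Then $N\in(x=x/\theta_{j_0})\cap\mathcal{W}$, and for every $k\ne j_0$ the inclusion $\theta_k(N)\subseteq\theta_{j_0}(N)$ says precisely that $N\notin(\theta_k/\theta_{j_0})$. Applying the assumed containment at the index $j_0$ therefore places $N$ in $\bigcup_{i=1}^n\mathcal{U}_i$, as required.

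Part (2) follows by the same template after replacing "largest" by "smallest" in the chain argument: $N\in(\theta_j/x=0)$ amounts to $\theta_j(N)\ne 0$, so $\bigcap_j(\theta_j/x=0)\cap\mathcal{W}$ consists of those $N\in\mathcal{W}$ with every $\theta_j(N)\ne 0$; uniseriality then supplies an index $j_0$ with $\theta_{j_0}(N)\subseteq\theta_k(N)$ for all $k$, and the observation $N\notin(\theta_{j_0}/\theta_k)$ for $k\ne j_0$ plays the role the dual observation did in (1). I do not expect any genuine obstacle beyond keeping the double indexing straight; the only thing to watch is that in the hypothesis of the right-hand side, $j$ is the fixed index while $k$ varies, so the union $\bigcup_{k\ne j}(\theta_k/\theta_j)$ (respectively $\bigcup_{k\ne j}(\theta_j/\theta_k)$) is precisely what uniseriality allows us to excise from the candidate witness.
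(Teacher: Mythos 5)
Your proof is correct and follows exactly the approach the paper intends: the paper's proof is a one-line appeal to pp-uniseriality, and your argument spells out precisely the maximal/minimal-element argument for the chain $\{\theta_j(N)\}_{j\le m}$ in both directions, handling the index bookkeeping correctly. No gaps.
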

\begin{proof}
Use once again pp-uniseriality of indecomposable pure injective modules over $R$.
\end{proof}

Thus in order to show that $T_R$ is decidable and hence to check $(\star)$ it is enough to be able to effectively decide whether
\[\left( x=x / xd=0 \right) \cap \mathcal{W}_1\cap\mathcal{W}_2
\subseteq \bigcup_{i=1}^n \left(\phi_i / \psi_i \right)\]
and
\[\left(xb=0 / c\midd x \right) \cap \mathcal{W}_1\cap\mathcal{W}_2 \subseteq \bigcup_{i=1}^n \left(\phi_i / \psi_i \right)\]
where $\mathcal{W}_1$ and $\mathcal{W}_2$ are of the form $\left( x \alpha = 0 / x=0 \right)$ or $\left( x=x / \delta \midd x \right)$.
In fact the other basic open sets that may arise on the left side, that is, those of the forms $(x = x / c \midd x)$ and $(x b = 0 / x = 0)$ can be absorbed by $\mathcal{W}_1$ and $\mathcal{W}_2$ by Lemma \ref{intersection}. For the same reason, only one set of each kind occurs on the left side of $(\star)$. Furthermore we can assume $b, c, d \neq 0$ in these final statements of $(\star)$.

Similar reductions apply to the right side, where one can assume that, for every $i = 1, \ldots, n$, $(\phi_i / \psi_i)$ is
either $(x=x / x d_i = 0)$ or $(x b_i = 0 / c_i \midd x)$ (replace if necessary in the other cases $x = x$ by $x 0 = 0$ and $x = 0$ by $0 \midd x$).

\section{Localizing}

We still work over a Pr\"ufer domain $R$ (if necessary, effectively given). We continue our analysis of the inclusion $(\star)$ as settled at the end of the last section. To do that, we localize at prime ideals of $R$ and use the results of \cite{Gre15}. We put for simplicity
\[\mathcal{W}_{\lambda, h , g}:=\left(\frac{x\lambda h =0}{g \midd x + x\lambda=0}\right)\]
where $\lambda, g, h$ denote elements of $R$. By this notation we cover all the basic open sets we are interested in, on the left and on the right side of $(\star)$.

The most important case in our analysis is that of $\mathcal{W}_{1, h, g}$. We use the following notation.

\begin{definition}
For $\mathfrak{p},\mathfrak{q}$ prime ideals of $R$, let
\[X_{\mathfrak{p},\mathfrak{q}}:=\{N\in\Zg_R \, \mid \, \Ass N=\mathfrak{p} \text{ and } \Div N=\mathfrak{q}\}.\]
\end{definition}

Recall the following fact.

\begin{remark}
If $\mathfrak{p},\mathfrak{q}$ are prime ideals such that $\mathfrak{p}+\mathfrak{q}\neq R$ then either $\mathfrak{p}\subseteq \mathfrak{q}$ or $\mathfrak{q}\subseteq\mathfrak{p}$.
\end{remark}
\begin{proof}
Since $\mathfrak{p}+\mathfrak{q}\neq R$ there exists a maximal ideal $\mathfrak{m}$ such that $\mathfrak{p},\mathfrak{q}\subseteq\mathfrak{m}$. Thus either $\mathfrak{p}R_{\mathfrak{m}}\subseteq \mathfrak{q}R_{\mathfrak{m}}$ or $\mathfrak{q}R_{\mathfrak{m}}\subseteq\mathfrak{p}R_{\mathfrak{m}}$.
Since $\mathfrak{p},\mathfrak{q}\subseteq \mathfrak{m}$, $\mathfrak{p}R_{\mathfrak{m}}\cap R=\mathfrak{p}$ and $\mathfrak{q}R_{\mathfrak{m}}\cap R=\mathfrak{q}$. So either $\mathfrak{p}\subseteq\mathfrak{q}$ or $\mathfrak{q}\subseteq\mathfrak{p}$.
\end{proof}

The next definition is crucial for our purposes.

\begin{definition}
Let $\mathfrak{p}$ be a prime ideal of $R$. For $a,b\in R\backslash\{0\}$ we set $a\leq_{\mathfrak{p}} b$ if and only if $bR_{\mathfrak{p}} \subseteq aR_{\mathfrak{p}}$. For all $a\in R$ we set $a \leq_\mathfrak{p} 0$.
\end{definition}

\begin{remark}\label{first}
{\rm Let $\mathfrak{p}$ be a prime ideal and $\mathfrak{m}\supseteq \mathfrak{p}$ a maximal ideal. If $a,b\in R$ then $a\leq_{\mathfrak{p}}b$ if only if $a\leq_{\mathfrak{p}R_{\mathfrak{m}}}b$ in $R_{\mathfrak{m}}$ in the sense of \cite[4.18]{Gre15}. This is
because $b\mathfrak{p}R_{\mathfrak{m}}\subseteq a\mathfrak{p}R_{\mathfrak{m}}$ if and only if $bR_{\mathfrak{p}}\subseteq aR_{\mathfrak{p}}$.}
\end{remark}

The relation $\leq_{\mathfrak{p}}$ can be equivalently characterized in the following way, using Fact \ref{tuganbaev}.

\begin{lemma}\label{generalize}
Let $\mathfrak{p}$ be a prime ideal of $R$, $a, b \in R$, $\alpha, r, s \in R$, $b \alpha = as$ and $a (\alpha - 1) = br$. Then $a \leq_{\mathfrak{p}} b$ if and only if $\alpha \notin \mathfrak{p}$ or $r \notin \mathfrak{p}$.
\end{lemma}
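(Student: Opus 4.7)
The plan is to exploit the characterization of $\mathfrak{p}$-membership via invertibility in the localization $R_\mathfrak{p}$, namely that $x\in R\setminus\mathfrak{p}$ if and only if $x$ is a unit in $R_\mathfrak{p}$, together with the fact that $R_\mathfrak{p}$ is a domain so that cancellation by a nonzero element is legitimate. Throughout I may assume $a\neq 0$; otherwise the relation $a\leq_\mathfrak{p} b$ is only defined (and true) when $b=0$, and one checks directly from $b\alpha=as$ and $a(\alpha-1)=br$ that $a=b=0$ forces no constraint while $a\neq 0, b=0$ forces $\alpha=1\notin\mathfrak{p}$.

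For the direction $(\Leftarrow)$ I would argue directly in $R_\mathfrak{p}$. If $\alpha\notin\mathfrak{p}$, then the equation $b\alpha=as$ yields $b=as\alpha^{-1}\in aR_\mathfrak{p}$; if instead $r\notin\mathfrak{p}$, then the equation $a(\alpha-1)=br$ yields $b=a(\alpha-1)r^{-1}\in aR_\mathfrak{p}$. Either way $bR_\mathfrak{p}\subseteq aR_\mathfrak{p}$, i.e.\ $a\leq_\mathfrak{p} b$.

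For the direction $(\Rightarrow)$ I would proceed by contrapositive: assume both $\alpha\in\mathfrak{p}$ and $r\in\mathfrak{p}$, and deduce $a\not\leq_\mathfrak{p} b$. Since $\mathfrak{p}$ is a proper ideal, $\alpha-1\notin\mathfrak{p}$, so $\alpha-1$ is invertible in $R_\mathfrak{p}$. Suppose for contradiction that $a\leq_\mathfrak{p} b$, so $b=au$ for some $u\in R_\mathfrak{p}$. Substituting into $a(\alpha-1)=br=aur$ and cancelling $a$ (valid because $a\neq 0$ and $R_\mathfrak{p}$ is a domain) gives $\alpha-1=ur$, and since $r\in\mathfrak{p}$ this lies in $\mathfrak{p}R_\mathfrak{p}$. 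Combined with $\alpha\in\mathfrak{p}R_\mathfrak{p}$ this forces $1\in\mathfrak{p}R_\mathfrak{p}$, contradicting properness.

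The main obstacle is essentially bookkeeping: one must make sure to dispose of the degenerate case $b=0$, where $a\leq_\mathfrak{p} 0$ holds by the second clause of the definition and the equations force $\alpha=1\notin\mathfrak{p}$, so the biconditional holds trivially. A slightly slicker alternative would be to invoke Remark \ref{first} to reduce the claim to the valuation domain $R_\mathfrak{m}$ for some maximal ideal $\mathfrak{m}\supseteq\mathfrak{p}$ and then quote the analogous characterization from \cite[4.18]{Gre15}, but the direct computation above is short enough that I would prefer to include it verbatim.
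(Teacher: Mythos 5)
Your proof is correct and follows essentially the same approach as the paper: both directions are handled by working directly in the localization $R_\mathfrak{p}$, using the given equations and the fact that $\alpha-1$ is a unit in $R_\mathfrak{p}$ when $\alpha\in\mathfrak{p}$. The only cosmetic difference is in the forward direction, where you cancel $a$ to reach $1\in\mathfrak{p}R_\mathfrak{p}$, while the paper observes $aR_\mathfrak{p}\subseteq b\mathfrak{p}R_\mathfrak{p}\subsetneq bR_\mathfrak{p}$; your explicit attention to the degenerate $b=0$ case (which the paper leaves implicit, and which indeed needs $a\neq 0$ for the stated biconditional to hold) is a small improvement.
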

\begin{proof}  If $\alpha \notin \mathfrak{p}$ then $\alpha$ is invertible in $R_{\mathfrak{p}}$, so $R_{\mathfrak{p}}$ includes $s / \alpha$ and $b = a (s/ \alpha)\in a R_{\mathfrak{p}}$.
Likewise, if $r \notin \mathfrak{p}$ then $R_{\mathfrak{p}}$ includes $(\alpha - 1)/r$ and hence $b = a (\alpha - 1)/r \in a R_{\mathfrak{p}}$. So we have proved the reverse direction.

Conversely suppose that $\alpha \in \mathfrak{p}$ and $r \in\mathfrak{p}$. Then $\alpha - 1 \notin \mathfrak{p}$ and $\alpha - 1$ is a unit in $R_{\mathfrak{p}}$. Thus $a = b r/(\alpha - 1)$. Since $r \in \mathfrak{p}$, $r / (\alpha - 1) \in \mathfrak{p}R_{\mathfrak{p}}$. It follows $a \in b \mathfrak{p} R_{\mathfrak{p}}$. So $b \notin aR_{\mathfrak{p}}$ since $a R_{\mathfrak{p}} \subseteq b \mathfrak{p}R_{\mathfrak{p}}\subsetneq bR_{\mathfrak{p}}$. Hence we have proved the forward direction.
\end{proof}

Over a B\'ezout domain $R$ a further, simpler characterization can be provided in terms of $\gcd$. For all $a,b\in R$ put
\[\gamma(a,b):=\left\{
                  \begin{array}{ll}
                    a/\gcd(a,b), & \hbox{if  $b\neq 0$,} \\
                    1, & \hbox{if  $b=0$.}
                  \end{array}
                \right.
\]

\begin{lemma}\label{purposeofgamma}
Let $R$ be a B\'ezout domain, $a,b\in R\backslash\{0\}$ and $\mathfrak{p}$ be a prime ideal of $R$. Then
$a \leq_{\mathfrak{p}} b$ if and only if $\gamma(a,b) \notin \mathfrak{p}$.
\end{lemma}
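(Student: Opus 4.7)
The plan is to rewrite everything in terms of the coprime factorization afforded by the B\'ezout structure and then read off both directions directly. Let $g=\gcd(a,b)$ and write $a = g\,\gamma(a,b)$ by the very definition of $\gamma$, and $b = g\beta$ for some $\beta\in R$. The standard property of $\gcd$ in a B\'ezout (more generally, GCD) domain ensures $\gcd(\gamma(a,b),\beta)=1$, so the B\'ezout identity gives $u,v\in R$ with $u\,\gamma(a,b)+v\beta=1$. I will also use the Euclid-type lemma valid in any GCD domain: if $\gcd(x,y)=1$ and $x\mid yz$, then $x\mid z$.

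Next I would unravel $a\leq_{\mathfrak{p}}b$: by definition this is $bR_{\mathfrak{p}}\subseteq aR_{\mathfrak{p}}$, equivalently $b\in aR_{\mathfrak{p}}$, equivalently there exist $r\in R$ and $t\in R\setminus\mathfrak{p}$ with $bt=ar$.

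For the forward direction, assume $a\leq_{\mathfrak{p}}b$ and pick $r,t$ as above. Substituting $a=g\,\gamma(a,b)$ and $b=g\beta$ and cancelling $g\neq 0$ in the domain $R$ yields $\beta t=\gamma(a,b)\,r$, so $\gamma(a,b)\mid \beta t$ in $R$. Since $\gcd(\gamma(a,b),\beta)=1$, the Euclid lemma gives $\gamma(a,b)\mid t$. Were $\gamma(a,b)\in\mathfrak{p}$, this would force $t\in\mathfrak{p}$, contradicting the choice of $t$; hence $\gamma(a,b)\notin\mathfrak{p}$.

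For the reverse direction, assume $\gamma(a,b)\notin\mathfrak{p}$; then $\gamma(a,b)$ is a unit in $R_{\mathfrak{p}}$, and the identity $b=g\beta=(g\,\gamma(a,b))\cdot\bigl(\beta/\gamma(a,b)\bigr)=a\cdot\bigl(\beta/\gamma(a,b)\bigr)$ inside $R_{\mathfrak{p}}$ shows $b\in aR_{\mathfrak{p}}$, i.e.\ $a\leq_{\mathfrak{p}}b$. The only point that requires any care is the invocation of Euclid's lemma, which is why the statement is restricted to B\'ezout (equivalently, GCD) domains; on Pr\"ufer domains one instead argues via Lemma \ref{generalize}. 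No serious obstacle is expected: everything reduces to extracting the common factor $g$ and exploiting coprimality.
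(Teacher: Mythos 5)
Your proof is correct and uses essentially the same coprime factorization $a = g\,\gamma(a,b)$, $b = g\beta$ with $\gcd(\gamma(a,b),\beta)=1$ as the paper does. The only minor variation is in the forward direction, where you invoke Euclid's lemma to get $\gamma(a,b)\mid t$ with $t\notin\mathfrak{p}$, while the paper proves the contrapositive by observing that $\gamma(a,b)\in\mathfrak{p}$ forces $\beta\notin\mathfrak{p}$ (again by coprimality), making $\beta$ a unit in $R_{\mathfrak{p}}$ and hence $aR_{\mathfrak{p}}\subsetneq bR_{\mathfrak{p}}$; both hinge on the same B\'ezout identity.
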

\begin{proof}
If $a / \gcd(a,b)\notin \mathfrak{p}$ then $a / \gcd(a,b)$ is a unit in $R_{\mathfrak{p}}$. Thus $b / \gcd(a,b)R_{\mathfrak{p}}\subseteq a/\gcd(a,b)R_{\mathfrak{p}}$. So $bR_{\mathfrak{p}}\subseteq aR_{\mathfrak{p}}$.

If $a / \gcd(a,b)\in\mathfrak{p}$ then $b / \gcd(a,b)\notin \mathfrak{p}$ since $a/\gcd(a,b)$ and $b/\gcd(a,b)$ are coprime. Thus $b / \gcd(a,b)$ is a unit in $R_{\mathfrak{p}}$. Therefore $a / \gcd(a,b)R_{\mathfrak{p}}\subset \mathfrak{p} \subsetneq b / \gcd(a,b) R_{\mathfrak{p}}$. So $aR_{\mathfrak{p}}\subsetneq bR_{\mathfrak{p}}$.
\end{proof}

Here are the main results of this section, again valid over any Pr\"ufer domain $R$.

\begin{lemma}\label{incpneqq}
Let $\mathfrak{p}\neq\mathfrak{q}$ be prime ideals in $R$ such that $\mathfrak{p}+\mathfrak{q}\neq R$. Let $\mu_i, g_i, h_i,\lambda,g,h\in R$ with
$\lambda, \mu_i \neq 0$, $h_i, h\in\mathfrak{p}$ and $g_i, g\in\mathfrak{q}$ ($1 \leq i \leq n$). Then
\[\mathcal{W}_{\lambda,h,g}\cap X_{\mathfrak{p},\mathfrak{q}}\subseteq\bigcup_{i=1}^n\mathcal{W}_{\mu_i,h_i,g_i}\cap X_{\mathfrak{p},\mathfrak{q}}\] if and only if
\[[\lambda,\lambda gh)_{\mathfrak{p}\cap\mathfrak{q}}\subseteq\bigcup_{i=1}^n[\mu_i,\mu_i g_i h_i)_{\mathfrak{p}\cap\mathfrak{q}}.\]
\end{lemma}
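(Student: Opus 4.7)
The plan is to reduce to the valuation-domain case of \cite{Gre15} by localizing at the larger of the two comparable primes, and then invoke the corresponding statement there.

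Since $\mathfrak{p}+\mathfrak{q}\neq R$, the (unlabeled) remark preceding the definition of $\leq_{\mathfrak{p}}$ forces $\mathfrak{p}$ and $\mathfrak{q}$ to be comparable. Let $\mathfrak{P}$ denote the larger of the two; it is prime, and $V:=R_{\mathfrak{P}}$ is therefore a valuation domain. For any $N \in X_{\mathfrak{p},\mathfrak{q}}$ one has $\Ass N \cup \Div N = \mathfrak{P}$, so every element of $R \setminus \mathfrak{P}$ acts as an automorphism on $N$ (as noted in the paragraph after Lemma \ref{prime}). Hence $N$ canonically becomes a $V$-module, its pp-definable subgroups over $R$ and over $V$ coincide, and $N$ is indecomposable pure injective as a $V$-module with $V$-assassin $\mathfrak{p}V$ and $V$-non-divisibility ideal $\mathfrak{q}V$. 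Conversely every point of the $V$-analog $X_{\mathfrak{p}V,\mathfrak{q}V}$ restricts to such an $N$ over $R$. Under this bijection the basic open set $\mathcal{W}_{\lambda,h,g}$ restricts on $X_{\mathfrak{p},\mathfrak{q}}$ to the analogous open set of $\Zg_V$ restricted to $X_{\mathfrak{p}V,\mathfrak{q}V}$, because the defining pp-formulas involve only the elements $\lambda,h,g \in R \subseteq V$; the same applies to each $\mathcal{W}_{\mu_i,h_i,g_i}$.

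The intervals $[\lambda,\lambda g h)_{\mathfrak{p}\cap\mathfrak{q}}$ are defined in terms of the relation $\leq_{\mathfrak{p}\cap\mathfrak{q}}$, and by Remark \ref{first} this agrees with $\leq_{(\mathfrak{p}\cap\mathfrak{q})V}$ when restricted to $R$. Hence the interval inclusion in $R$ translates directly to the corresponding inclusion in $V$. With both the topological and the arithmetic sides transferred to $V$, the equivalence to be proved becomes the valuation-domain version established in \cite{Gre15}, which we invoke.

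The main obstacle is pinning down that the intervals $[\lambda,\lambda gh)_{\mathfrak{p}\cap\mathfrak{q}}$ used here match the notation of \cite{Gre15} under the localization. In that reference the intervals live in the quotient of the ring (or of its localization) by the symmetric part of $\leq$, and one needs to verify that elements of $R \setminus \mathfrak{P}$, which become units in $V$, do not create extra equivalence classes beyond those already present in $R$ modulo $\sim_{\mathfrak{p}\cap\mathfrak{q}}$. Checking this, together with the compatibility in Remark \ref{first}, is the only substantive work; once done, the equivalence follows mechanically from the valuation-domain statement.
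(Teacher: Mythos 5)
Your proposal is correct and follows essentially the same route as the paper: both reduce to the valuation-domain case and invoke the corresponding result in \cite{Gre15} (specifically \cite[4.21]{Gre15}). The only cosmetic difference is the choice of localization — the paper localizes at a maximal ideal $\mathfrak{m}$ containing $\mathfrak{p}+\mathfrak{q}$, while you localize at $\mathfrak{P}$, the larger of the two comparable primes; both give valuation domains (since $R$ is Pr\"ufer) and both capture every $N \in X_{\mathfrak{p},\mathfrak{q}}$, so the argument goes through identically either way.
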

\begin{proof}
Intervals refer to $\leq_{\mathfrak{p} \cap \mathfrak{q}}$.
By working inside $R_{\mathfrak{m}}$ where $\mathfrak{m}$ is a maximal ideal containing $\mathfrak{p}+\mathfrak{q}$, this follows directly from \cite[4.21]{Gre15}.
\end{proof}

\begin{lemma}\label{incpeqq}
Let $\mathfrak{p}$ be a prime ideal in $R$. Let $\mu_i,g_i,h_i,\lambda,g,h\in R$ with $\lambda, \mu_i \neq 0$, $g_i, h_i, g, h\in\mathfrak{p}$ ($1 \leq i \leq n$). Then
\[\mathcal{W}_{\lambda, h, g} \cap X_{\mathfrak{p},\mathfrak{p}}\subseteq \bigcup_{i=1}^n\mathcal{W}_{\mu_i, h_i, g_i}\cap X_{\mathfrak{p},\mathfrak{p}}\] if and only if
$$
(\lambda,\lambda gh)_{\mathfrak{p}}\subseteq\bigcup_{i=1}^n (\mu_i,\mu_i g_i h_i)_{\mathfrak{p}}.$$
\end{lemma}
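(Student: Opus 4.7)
The proof should mirror that of Lemma \ref{incpneqq}: reduce to a valuation domain via localization and then invoke the corresponding result from \cite{Gre15}. Concretely, the plan is to pick a maximal ideal $\mathfrak{m}$ of $R$ containing $\mathfrak{p}$ (which exists since $\mathfrak{p}$ is a proper prime ideal) and to transfer both sides of the equivalence to statements about the valuation domain $R_\mathfrak{m}$, where the analogue of the desired statement is already known.

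The first step is to observe that any $N \in X_{\mathfrak{p},\mathfrak{p}}$ is naturally an $R_\mathfrak{m}$-module. Indeed, $\Ass N \cup \Div N = \mathfrak{p}\subseteq \mathfrak{m}$, so every element of $R\setminus\mathfrak{m}$ acts as an automorphism on $N$ (by the remark following Lemma \ref{prime}, together with the fact that the endomorphism ring of an indecomposable pure injective is local). Thus the localization-at-$\mathfrak{m}$ functor sets up a bijection between $X_{\mathfrak{p},\mathfrak{p}}$ in $\Zg_R$ and $X_{\mathfrak{p}R_\mathfrak{m},\mathfrak{p}R_\mathfrak{m}}$ in $\Zg_{R_\mathfrak{m}}$, and this bijection preserves the open sets $\mathcal{W}_{\lambda,h,g}$: evaluating such a pp-pair on $N$ yields the same subgroups whether one computes with $R$-scalars or with $R_\mathfrak{m}$-scalars, since the denominators of any element of $R_\mathfrak{m}$ already act invertibly on $N$.

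The second step is to apply \cite[4.21]{Gre15} inside $R_\mathfrak{m}$, which is a valuation domain, with $\mathfrak{p}R_\mathfrak{m}$ playing the role of both prime ideals. This directly gives that the inclusion of open sets inside $X_{\mathfrak{p}R_\mathfrak{m},\mathfrak{p}R_\mathfrak{m}}$ is equivalent to the analogous open-interval inclusion for $\leq_{\mathfrak{p}R_\mathfrak{m}}$, namely $(\lambda,\lambda gh)_{\mathfrak{p}R_\mathfrak{m}}\subseteq\bigcup_{i=1}^n(\mu_i,\mu_i g_ih_i)_{\mathfrak{p}R_\mathfrak{m}}$. The key structural point that distinguishes this from Lemma \ref{incpneqq} is precisely that $\mathfrak{p}R_\mathfrak{m}=\mathfrak{q}R_\mathfrak{m}$ here; in \cite{Gre15} this degeneracy is what turns the half-open intervals $[\,\cdot\,,\cdot\,)$ of the $\mathfrak{p}\neq\mathfrak{q}$ case into open intervals $(\,\cdot\,,\cdot\,)$.

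Finally, one transfers this interval inclusion back to $R$ by Remark \ref{first}, which says that $a\leq_\mathfrak{p} b$ holds in $R$ iff $a\leq_{\mathfrak{p}R_\mathfrak{m}}b$ holds in $R_\mathfrak{m}$. Hence the two interval conditions are identical. I expect the only subtle point to be the bookkeeping of the bijection $X_{\mathfrak{p},\mathfrak{p}}\leftrightarrow X_{\mathfrak{p}R_\mathfrak{m},\mathfrak{p}R_\mathfrak{m}}$ and the verification that the $\mathcal{W}_{\lambda,h,g}$ are intrinsic to $N$ (independent of whether we regard it over $R$ or $R_\mathfrak{m}$); once that is in place, the lemma follows from the corresponding valuation-domain statement in \cite{Gre15} essentially by quoting it, exactly as in Lemma \ref{incpneqq}.
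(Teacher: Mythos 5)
Your proposal follows exactly the same route the paper takes: localize at a maximal ideal $\mathfrak{m}\supseteq\mathfrak{p}$ and quote the corresponding valuation-domain result from \cite{Gre15}, with Remark~\ref{first} supplying the translation of $\leq_\mathfrak{p}$. The only slip is the citation: for the degenerate case $\mathfrak{p}=\mathfrak{q}$ the paper invokes \cite[4.23]{Gre15} rather than \cite[4.21]{Gre15} (the latter is what Lemma~\ref{incpneqq} uses for $\mathfrak{p}\neq\mathfrak{q}$); your own remark about the half-open intervals collapsing to open intervals shows you have the right distinction in mind, so this is a mis-numbered reference rather than a conceptual error.
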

\begin{proof}
Intervals refer to $\leq_{\mathfrak{p}}$.
By working inside $R_{\mathfrak{m}}$ where $\mathfrak{m}$ is a maximal ideal containing $\mathfrak{p}$, this follows directly from \cite[4.23]{Gre15}.
\end{proof}
As a first consequence we get:
\begin{lemma}\label{pairwisecomplast2var}
Let $g_i, h_i\in R$ for $1 \leq i \leq n$ and let $\mathfrak{p},\mathfrak{q}$ be prime ideals such that  $\mathfrak{p}+\mathfrak{q}\neq R$ (possibly $\mathfrak{p} = \mathfrak{q}$). Then the sets
\[\mathcal{W}_{1, h_i, g_i}\cap X_{\mathfrak{p},\mathfrak{q}}\] ($i = 1, \ldots, n$) are pairwise comparable under inclusion.
\end{lemma}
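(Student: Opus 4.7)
The plan is to apply Lemmas \ref{incpneqq} and \ref{incpeqq} with $\lambda = \mu_j = 1$ to translate pairwise inclusions between the sets $\mathcal{W}_{1, h_i, g_i} \cap X_{\mathfrak{p}, \mathfrak{q}}$ into pairwise inclusions of intervals sharing the common left endpoint $1$ in a totally ordered set, where they are manifestly comparable.

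First I would dispatch the degenerate cases which are not directly covered by the hypotheses of those lemmas. If $h_i \notin \mathfrak{p}$, then for any $N \in X_{\mathfrak{p}, \mathfrak{q}}$ the element $h_i$ acts injectively on $N$ (since $\mathfrak{p} = \Ass N$), so $m h_i = 0$ forces $m = 0$, in which case $g_i \mid m$; hence no witness exists for $\mathcal{W}_{1,h_i,g_i}$. Symmetrically, if $g_i \notin \mathfrak{q}$, then $g_i$ divides every element of any $N \in X_{\mathfrak{p},\mathfrak{q}}$ (since $\mathfrak{q} = \Div N$), so the condition $g_i \nmid m$ cannot be satisfied. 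In either case $\mathcal{W}_{1, h_i, g_i} \cap X_{\mathfrak{p}, \mathfrak{q}} = \emptyset$, which is trivially comparable with any other set. So without loss of generality we may assume $h_i \in \mathfrak{p}$ and $g_i \in \mathfrak{q}$ for all $i$.

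Under this assumption the hypotheses of Lemma \ref{incpneqq} (resp.\ Lemma \ref{incpeqq}) are satisfied with $\lambda = \mu_j = 1$, so that for every pair $i, j$ the inclusion
\[
\mathcal{W}_{1, h_i, g_i} \cap X_{\mathfrak{p}, \mathfrak{q}} \subseteq \mathcal{W}_{1, h_j, g_j} \cap X_{\mathfrak{p}, \mathfrak{q}}
\]
is equivalent to $[1, g_i h_i)_{\mathfrak{p} \cap \mathfrak{q}} \subseteq [1, g_j h_j)_{\mathfrak{p} \cap \mathfrak{q}}$ when $\mathfrak{p} \neq \mathfrak{q}$, and to $(1, g_i h_i)_{\mathfrak{p}} \subseteq (1, g_j h_j)_{\mathfrak{p}}$ when $\mathfrak{p} = \mathfrak{q}$. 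Recall that the condition $\mathfrak{p} + \mathfrak{q} \neq R$ ensures (via the remark preceding Definition~5.3) that $\mathfrak{p} \cap \mathfrak{q}$ is itself a prime ideal, so that the relation $\leq_{\mathfrak{p} \cap \mathfrak{q}}$ is well defined.

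The final step uses the fact that, because $R$ is Pr\"ufer, the localization $R_{\mathfrak{p} \cap \mathfrak{q}}$ (resp.\ $R_{\mathfrak{p}}$) is a valuation domain. Consequently the relation $\leq_{\mathfrak{p} \cap \mathfrak{q}}$ (resp.\ $\leq_{\mathfrak{p}}$) totally orders equivalence classes of elements of $R$. Two half-open intervals (or two open intervals) with the same left endpoint in a totally ordered set are comparable under inclusion: either $g_i h_i \leq g_j h_j$, yielding $[1, g_i h_i) \subseteq [1, g_j h_j)$, or the reverse inequality holds, with the same conclusion reversed (and likewise for the open intervals in the case $\mathfrak{p} = \mathfrak{q}$). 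Transported back through Lemmas \ref{incpneqq} and \ref{incpeqq}, this gives the asserted pairwise comparability. I do not foresee a serious obstacle: the real work has already been done in the preceding lemmas, and this statement reduces to the observation that intervals with a common endpoint in a chain are totally ordered.
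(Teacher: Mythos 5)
Your proof is correct and follows essentially the same route as the paper's: dispose of the empty sets as trivially comparable, observe that non-emptiness forces $h_i \in \mathfrak{p}$ and $g_i \in \mathfrak{q}$, then invoke the total order $\leq_{\mathfrak{p}\cap\mathfrak{q}}$ on the products $g_i h_i$ and translate back via Lemmas \ref{incpneqq}/\ref{incpeqq}. You simply spell out the "non-empty $\Rightarrow$ $h_i \in \mathfrak{p}$, $g_i \in \mathfrak{q}$" step (via $\Ass N$ and $\Div N$) a bit more explicitly than the paper does.
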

\begin{proof}
If $\mathcal{W}_{1, h_i, g_i}\cap X_{\mathfrak{p},\mathfrak{q}}$ is empty then it is comparable with every other set. So, take two non-empty set $\mathcal{W}_{1, h_1, g_1}\cap X_{\mathfrak{p},\mathfrak{q}}$ and $\mathcal{W}_{1, h_2, g_2}\cap X_{\mathfrak{p},\mathfrak{q}}$. Then $h_1, h_2\in\mathfrak{p}$ and $g_1, g_2\in \mathfrak{q}$.

Now, since $\leq_{\mathfrak{p}\cap\mathfrak{q}}$ is a total order (on ideals corresponding to elements), either $g_1 h_1\leq_{\mathfrak{p}\cap\mathfrak{q}} g_2 h_2$ or $g_2 h_2\leq_{\mathfrak{p}\cap\mathfrak{q}} g_1 h_1$.

Thus either
$$
(1, h_1 g_1)_{\mathfrak{p}\cap\mathfrak{q}}\subseteq (1,h_2 g_2)_{\mathfrak{p}\cap\mathfrak{q}} \, \, {\rm (respectively} \, \,
[1, h_1 g_1)_{\mathfrak{p} \cap \mathfrak{q}} \subseteq [1, h_2 g_2)_{\mathfrak{p} \cap \mathfrak{q}} {\rm )}
$$
or
$$
(1, h_2 g_2)_{\mathfrak{p} \cap \mathfrak{q}} \subseteq (1,h_1 g_1)_{\mathfrak{p} \cap\mathfrak{q}} \, \, {\rm (respectively}
\, \, [1, h_2 g_2)_{\mathfrak{p} \cap \mathfrak{q}}\subseteq [1,h_1 g_1)_{\mathfrak{p}\cap\mathfrak{q}} {\rm )}.
$$\end{proof}

\begin{prop}\label{mainprop}
Let $\lambda, g, h\in R$ with $\lambda\neq 0$ and $g_i, h_i
\in R$ ($i = 1, \ldots, n$), $\mu_j \in R$, $\mu_j \neq 0$ ($j = 1, \ldots, m)$.
The following are equivalent.
\begin{enumerate}
\item $\mathcal{W}_{\lambda, h, g} \subseteq \bigcup_{i=1}^n\mathcal{W}_{1, h_i, g_i} \cup \bigcup_{j=1}^m \mathcal{W}_{\mu_j,0,0}$.
\item For all prime ideals $\mathfrak{p}$ and $\mathfrak{q}$ of $R$ with
$\mathfrak{p} + \mathfrak{q} \neq R$ (and possibly $\mathfrak{p} = \mathfrak{q}$), $\mathcal{W}_{\lambda, h, g} \cap X_{\mathfrak{p}, \mathfrak{q}} \subseteq \left (\bigcup_{i=1}^n\mathcal{W}_{1, h_i, g_i} \cup \bigcup_{j=1}^m \mathcal{W}_{\mu_j,0,0} \right) \cap X_{\mathfrak{p}, \mathfrak{q}}$.
\item For all prime ideals $\mathfrak{p},\mathfrak{q}$ such that $\mathfrak{p}+\mathfrak{q} \neq R$, $h \in\mathfrak{p}$ and $g \in\mathfrak{q}$, one of the following holds
\begin{enumerate}
\item $\mu_j \leq_{\mathfrak{p} \cap \mathfrak{q}} \lambda$ for some $j =1, \ldots, m$,
\item there exists $i = 1, \ldots, n$ such that $h_i\in\mathfrak{p}$,
$g_i\in\mathfrak{q}$ and $\lambda gh\leq_{\mathfrak{p}\cap\mathfrak{q}} g_i h_i$,
\item there exist $i = 1, \ldots, n$ and $j = 1, \ldots, m$ such that $h_i \in\mathfrak{p}$,
$g_i \in \mathfrak{q}$ and $\mu_j \leq_{\mathfrak{p}\cap\mathfrak{q}} g_i h_i$
\end{enumerate}
and for all primes $\mathfrak{p}$ with $g,h\in\mathfrak{p}$ one of the following holds
\begin{enumerate}
\item $\mu_j \leq_{\mathfrak{p}}\lambda$ for some $j = 1, \ldots, m$,
\item there exists $i = 1, \ldots, n$ such that $g_i, h_i \in \mathfrak{p}$ and $\lambda gh \leq_{\mathfrak{p} } g_i h_i$,
\item there exist $i = 1, \ldots, n$ and $j = 1, \ldots, m$ such that $g_i, h_i\in\mathfrak{p}$ and $\mu_j <_{\mathfrak{p}} g_i h_i$.
\end{enumerate}
\end{enumerate}

\end{prop}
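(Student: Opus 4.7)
The plan is to prove $(1)\Leftrightarrow (2)$ by a covering argument and $(2)\Leftrightarrow (3)$ by reducing, for each fixed pair $(\mathfrak{p},\mathfrak{q})$, to an inclusion of intervals inside the totally ordered value group of $R_{\mathfrak{p}\cap\mathfrak{q}}$.

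For $(1)\Rightarrow(2)$ I would simply intersect both sides with $X_{\mathfrak{p},\mathfrak{q}}$. For $(2)\Rightarrow(1)$, given $N\in\mathcal{W}_{\lambda,h,g}$, set $\mathfrak{p}:=\Ass N$ and $\mathfrak{q}:=\Div N$; by Lemma~\ref{prime} and the remark following it, $\Ass N\cup\Div N$ is a proper prime ideal, so $\mathfrak{p}+\mathfrak{q}\neq R$. Then $N\in X_{\mathfrak{p},\mathfrak{q}}$ and $(2)$ places $N$ in the right-hand side of $(1)$.

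For $(2)\Leftrightarrow(3)$, fix primes $\mathfrak{p},\mathfrak{q}$ with $\mathfrak{p}+\mathfrak{q}\neq R$. I would first observe that $\mathcal{W}_{\lambda,h,g}\cap X_{\mathfrak{p},\mathfrak{q}}$ is empty unless $h\in\mathfrak{p}$ and $g\in\mathfrak{q}$: if $h\notin\Ass N$ then $\cdot h$ is injective on $N$ and the numerator collapses into the denominator, while if $g\notin\Div N$ then $gN=N$ swallows everything. This matches exactly the outer quantifier of $(3)$. On the non-trivial pairs, only those $\mathcal{W}_{1,h_i,g_i}$ with $h_i\in\mathfrak{p}$ and $g_i\in\mathfrak{q}$ contribute, while every $\mathcal{W}_{\mu_j,0,0}$ is always eligible since $0\in\mathfrak{p}\cap\mathfrak{q}$. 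Applying Lemma~\ref{incpneqq} (when $\mathfrak{p}\neq\mathfrak{q}$) or Lemma~\ref{incpeqq} (when $\mathfrak{p}=\mathfrak{q}$) transforms the restricted inclusion into a containment of half-open, respectively open, intervals for $\leq_{\mathfrak{p}\cap\mathfrak{q}}$. Passing through the valuation $v$ of $R_{\mathfrak{p}\cap\mathfrak{q}}$ (with the convention $v(0)=\infty$), the target becomes $[v(\lambda),v(\lambda gh))$, the initial-segment pieces are $[0,v(g_ih_i))$ for $i\in I(\mathfrak{p},\mathfrak{q}):=\{i:h_i\in\mathfrak{p},\,g_i\in\mathfrak{q}\}$, the terminal-segment pieces are $[v(\mu_j),\infty)$, and their union equals $[0,M)\cup[m_\ast,\infty)$ with $M:=\sup_{i\in I(\mathfrak{p},\mathfrak{q})}v(g_ih_i)$ and $m_\ast:=\inf_j v(\mu_j)$.

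A case analysis on the totally ordered value group then shows that the target is contained in this union if and only if one of the following holds: $v(\lambda)\geq m_\ast$ (the target fits into the terminal segment), $v(\lambda gh)\leq M$ (the target fits into the initial segment), or the two pieces together fill $[0,\infty)$. Pulling these conditions back through $v$ gives precisely (a), (b), (c) of $(3)$. The third condition is $m_\ast\leq M$ in the half-open case but $m_\ast<M$ in the open case, which explains the strict inequality in (c) for the diagonal case $\mathfrak{p}=\mathfrak{q}$: $(0,M)\cup(M,\infty)$ omits the single value $M$ whereas $[0,M)\cup[M,\infty)$ does not. The main obstacle will be the bookkeeping of the edge cases -- empty index sets, intervals extending to $\infty$ when $gh=0$ or some $g_ih_i=0$, and the open versus half-open dichotomy that is exactly what produces the strict inequality in (c) for $\mathfrak{p}=\mathfrak{q}$ -- but once these are handled, each of (a), (b), (c) is seen to be both necessary and sufficient for the interval inclusion, completing the equivalence $(2)\Leftrightarrow (3)$.
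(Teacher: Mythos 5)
Your proposal is correct and follows essentially the same route as the paper's proof: $(1)\Leftrightarrow(2)$ via $\Ass N$, $\Div N$ and the fact that their union is a proper prime ideal, and $(2)\Leftrightarrow(3)$ by reducing, pair by pair, to the interval inclusions of Lemmas~\ref{incpneqq} and~\ref{incpeqq} in the totally ordered structure coming from localizing at $\mathfrak{p}\cap\mathfrak{q}$. The only presentational difference is that the paper first invokes Lemma~\ref{pairwisecomplast2var} to replace the union of the sets $\mathcal{W}_{1,h_i,g_i}\cap X_{\mathfrak{p},\mathfrak{q}}$ by a single largest one before applying the interval lemmas, whereas you fold this into the choice of $M=\max_i v(g_ih_i)$; since those intervals are nested initial segments the two are the same move, and your explicit valuation picture also makes the half-open versus open distinction (and hence the strict inequality in the diagonal case (c)) immediately transparent.
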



\begin{proof}
$(1)\Rightarrow (3)$ Suppose that $\mathfrak{p},\mathfrak{q}$ are prime ideals, $\mathfrak{p}+\mathfrak{q}\neq R$, $h \in\mathfrak{p}$, $g \in\mathfrak{q}$ and that
\[\mathcal{W}_{\lambda,h,g}\subseteq \bigcup_{i=1}^n\mathcal{W}_{1,h_i,g_i} \cup \bigcup_{j=1}^m \mathcal{W}_{\mu_j,0,0}.\]
First assume $\mathfrak{p} \neq \mathfrak{q}$.
By Lemma \ref{pairwisecomplast2var} this implies (unless $n = 0$, namely no open set $\mathcal{W}_{1, h_i, g_i}$ is involved) that there exists an $1\leq i\leq n$ such that
\[\mathcal{W}_{\lambda,h,g} \cap X_{\mathfrak{p},\mathfrak{q}}\subseteq \left( \mathcal{W}_{1, h_i, g_i}\cup \bigcup_{j=1}^m \mathcal{W}_{\mu_j,0,0} \right) \cap X_{\mathfrak{p},\mathfrak{q}}.\]

Since $\lambda\neq 0$, $h \in \mathfrak{p}$ and $g \in \mathfrak{q}$,
$\mathcal{W}_{\lambda,h,g} \cap X_{\mathfrak{p},\mathfrak{q}}
\neq\emptyset$. Thus by Lemma \ref{incpneqq}, either
$[\lambda,\lambda gh)_{\mathfrak{p}\cap\mathfrak{q}}\subseteq [\mu_j,0)_{\mathfrak{p}\cap\mathfrak{q}}$ for some $j$ or $h_i\in\mathfrak{p}, g_i\in\mathfrak{q}$ and
$[\lambda,\lambda gh)_{\mathfrak{p}\cap\mathfrak{q}}\subseteq [1, g_i h_i)_{\mathfrak{p}\cap\mathfrak{q}}\cup [\mu_j,0)_{\mathfrak{p}\cap\mathfrak{q}}$ for some $j$ (which also holds in the parenthetical case $n=0$). Since $\leq_{\mathfrak{p}\cap\mathfrak{q}}$ is a total order (on principal ideals $r R_{\mathfrak{p} \cap \mathfrak{q}}$ with $r \in R$), either $\mu_j \leq_{\mathfrak{p}\cap\mathfrak{q}}\lambda$, $\lambda gh\leq_{\mathfrak{p}\cap\mathfrak{q}}g_i h_i $ or $\mu_j \leq_{\mathfrak{p}\cap\mathfrak{q}} g_i h_i$.

Now suppose that $\mathfrak{p}$ is prime and $g,h\in\mathfrak{p}$. As before, we can assume that there exists an $1\leq i\leq n$ such that
\[\mathcal{W}_{\lambda, h, g} \cap X_{\mathfrak{p},\mathfrak{p}} \subseteq \left( \mathcal{W}_{1, h_i, g_i}
\cup \bigcup_{j=1}^m \mathcal{W}_{\mu_j,0,0} \right) \cap X_{\mathfrak{p},\mathfrak{p}}.\] By Lemma \ref{incpeqq}, either
$(\lambda,\lambda gh)_{\mathfrak{p}}\subseteq (\mu_j,0)_{\mathfrak{p}}$ for some $j$ or $h_i \in\mathfrak{p}, g_i\in\mathfrak{p}$ and for some $j$
\[(\lambda,\lambda gh)_{\mathfrak{p}}\subseteq (\mu_j,0)_{\mathfrak{p}}\cup (1, g_i h_i)_{\mathfrak{p}}.\] Since $\leq_{\mathfrak{p}}$ is a total order (on principal ideals $r R_{\mathfrak{p}}$ with $r \in R$), either $\mu_j \leq_{\mathfrak{p}}\lambda$, $\lambda gh \leq_{\mathfrak{p}} g_i h_i $ or $\mu_j <_{\mathfrak{p}} g_i h_i$.

\smallskip

$(3)\Rightarrow (2)$
We need to show that for all prime ideals $\mathfrak{p},\mathfrak{q}$ such that $\mathfrak{p}+\mathfrak{q}\neq R$,
\[\mathcal{W}_{\lambda, h, g}\cap X_{\mathfrak{p},\mathfrak{q}}\subseteq \bigcup_{i=1}^n \left( \mathcal{W}_{1, h_i, g_i} \cap X_{\mathfrak{p},\mathfrak{q}} \right) \cup \left( \bigcup_{j=1}^m \mathcal{W}_{\mu_j,0,0} \cap X_{\mathfrak{p},\mathfrak{q}} \right). \]
If $h \notin \mathfrak{p}$ or $g \notin \mathfrak{q}$ then $\mathcal{W}_{\lambda, h, g} \cap X_{\mathfrak{p},\mathfrak{q}}=\emptyset$.
Then we may assume $h \in\mathfrak{p}$ and $g \in\mathfrak{q}$.

First suppose that $\mathfrak{p}\neq \mathfrak{q}$. By (2), one of the following holds
\begin{itemize}
\item[(a)] $\mu_j \leq_{\mathfrak{p} \cap \mathfrak{q}} \lambda$ for some $j$,
\item[(b)] there exists $i = 1, \ldots, n$ such that $h_i \in
\mathfrak{p}$, $g_i \in \mathfrak{q}$ and $\lambda gh \leq_{\mathfrak{p}\cap\mathfrak{q}} g_i h_i$,
\item[(c)] there exist $i = 1, \ldots, n$ and $j = 1, \ldots, m$ such that $h_i \in \mathfrak{p}$, $g_i \in \mathfrak{q}$ and $\mu \leq_{\mathfrak{p}\cap\mathfrak{q}}g_i h_i$.
\end{itemize}

By Lemma \ref{incpneqq} each of (a), (b) and (c) implies
\[\mathcal{W}_{\lambda, h, g} \cap X_{\mathfrak{p},\mathfrak{q}}\subseteq \left( \mathcal{W}_{1, h_i, g_i} \cup \bigcup_{j=1}^m \mathcal{W}_{\mu_j,0,0} \right) \cap X_{\mathfrak{p},\mathfrak{q}}.\]

Now suppose $\mathfrak{p}=\mathfrak{q}$. By (2), one of the following holds
\begin{itemize}
\item[(a)] $\mu_j \leq_{\mathfrak{p}}\lambda$ for some $j$,
\item[(b)] there exists $i = 1, \ldots, n$ such that $g_i, h_i \in \mathfrak{p}$ and $\lambda gh \leq_{\mathfrak{p}} g_i h_i $,
\item[(c)] there exist $i = 1, \ldots, n$ and $j = 1, \ldots, m$ such that $g_i, h_i \in \mathfrak{p}$ and $\mu<_{\mathfrak{p}} g_i h_i$.
\end{itemize}

By Lemma \ref{incpeqq} each of (a), (b) and (c) implies
\[\mathcal{W}_{\lambda, h, g}\cap X_{\mathfrak{p},\mathfrak{p}}\subseteq \left( \mathcal{W}_{1, h_i, g_i} \cup \bigcup_{j=1}^m \mathcal{W}_{\mu_j,0,0} \right) \cap X_{\mathfrak{p},\mathfrak{p}}.\]

Thus for all $\mathfrak{p},\mathfrak{q}$ such that $\mathfrak{p}+\mathfrak{q}\neq R$,
\[\mathcal{W}_{\lambda, h, g} \cap X_{\mathfrak{p},\mathfrak{q}}\subseteq \bigcup_{i=1}^n \left( \mathcal{W}_{1, h_i, g_i} \cap X_{\mathfrak{p},\mathfrak{q}} \right) \cup \left( \bigcup_{j=1}^m \mathcal{W}_{\mu_j,0,0} \cap X_{\mathfrak{p},\mathfrak{q}} \right).\]

So (2) holds.

\smallskip

$(2) \Rightarrow (1)$ This is because every indecomposable pure injective $R$-module $N$ admits some $\mathfrak{p}$ and $\mathfrak{q}$ as $\Ass N$ and $\Div N$ respectively, and $\Ass N + \Div N = \Ass N \cup \Div N \neq R$.\end{proof}

\section{The main theorem}

We state and prove here our main result, concerning B\'ezout domains, that is, Theorem \ref{decforbezbase} below. First we introduce the 4-ary relation characterizing the effectively given B\'ezout domains $R$ for which $T_R$ is decidable. We call it the {\sl double
prime radical relation}, written $\DPR$. It makes sense for every commutative ring $R$, in particular for Pr\"ufer domains. 

We define $\DPR (R)$ to be
\[\{(a,b,c,d)\in R^4 \mid \text{ for all prime ideals } \mathfrak{p}, \mathfrak{q}\subseteq R \text{ with } \mathfrak{p}+\mathfrak{q}\neq R \text{ either }a\in\mathfrak{p}, b\notin \mathfrak{p}, c\in\mathfrak{q},\text{ or }d\notin\mathfrak{q}\}.\]

Here are three characterizations of $\DPR$ over Pr\"ufer domains. The first applies to a wider framework. It uses localization.

\begin{prop}\label{maximal}
Let $R$ be a commutative domain, $a, b, c, d \in R$. Then the following are equivalent
\begin{enumerate}
\item $(a, b, c, d) \notin \DPR(R)$,
\item there is some maximal ideal $\mathfrak{m}$ of $R$ such that $a \notin \rad (b R_{\mathfrak{m}})$ and $c \notin \rad (d R_{\mathfrak{m}})$.
\end{enumerate}
\end{prop}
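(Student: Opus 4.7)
The plan is to prove the two implications separately, using the standard bijection between primes of $R_{\mathfrak{m}}$ and primes of $R$ contained in $\mathfrak{m}$, together with the characterization of the radical of an ideal as the intersection of the primes containing it.

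For $(1) \Rightarrow (2)$, I would start with primes $\mathfrak{p}, \mathfrak{q}$ satisfying the listed conditions. Since $\mathfrak{p} + \mathfrak{q} \neq R$, this sum is contained in some maximal ideal $\mathfrak{m}$; in particular both $\mathfrak{p}$ and $\mathfrak{q}$ sit inside $\mathfrak{m}$. Then $\mathfrak{p} R_{\mathfrak{m}}$ is a prime of $R_{\mathfrak{m}}$ containing $b$, so $\rad(b R_{\mathfrak{m}}) \subseteq \mathfrak{p} R_{\mathfrak{m}}$. Because $\mathfrak{p} \subseteq \mathfrak{m}$, the contraction $\mathfrak{p} R_{\mathfrak{m}} \cap R$ equals $\mathfrak{p}$, so $a \notin \mathfrak{p}$ forces $a \notin \mathfrak{p} R_{\mathfrak{m}}$ and hence $a \notin \rad(b R_{\mathfrak{m}})$. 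The argument for $c \notin \rad(d R_{\mathfrak{m}})$ is identical using $\mathfrak{q}$.

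For $(2) \Rightarrow (1)$, I would use the fact that in any commutative ring, $\rad(I)$ is the intersection of all primes containing $I$. Since $a \notin \rad(bR_{\mathfrak{m}})$, there is a prime ideal $\mathfrak{P}$ of $R_{\mathfrak{m}}$ with $b \in \mathfrak{P}$ and $a \notin \mathfrak{P}$; contract to get $\mathfrak{p} := \mathfrak{P} \cap R$, a prime of $R$ contained in $\mathfrak{m}$, with $b \in \mathfrak{p}$ and $a \notin \mathfrak{p}$. Analogously, pick $\mathfrak{q} \subseteq \mathfrak{m}$ prime with $d \in \mathfrak{q}$ and $c \notin \mathfrak{q}$. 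Then $\mathfrak{p} + \mathfrak{q} \subseteq \mathfrak{m}$, which is proper, so $\mathfrak{p} + \mathfrak{q} \neq R$, giving precisely the data required by (1).

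There is essentially no obstacle here: the result is a direct repackaging of the prime ideal correspondence under localization, applied in parallel to the two pairs $(a,b)$ and $(c,d)$. The only point that needs a line of care is the equivalence between \emph{a prime $\mathfrak{p}$ of $R$ contained in some common maximal ideal $\mathfrak{m}$ with $b \in \mathfrak{p}$, $a \notin \mathfrak{p}$} and \emph{a prime of $R_{\mathfrak{m}}$ witnessing $a \notin \rad(b R_{\mathfrak{m}})$}, which is exactly the standard contraction/extension correspondence for prime ideals.
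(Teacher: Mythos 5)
Your proof is correct and follows essentially the same route as the paper's: both directions are handled by the standard bijection between primes of $R_{\mathfrak{m}}$ and primes of $R$ contained in $\mathfrak{m}$, after choosing $\mathfrak{m} \supseteq \mathfrak{p}+\mathfrak{q}$ in one direction and extracting witnessing primes from $\rad(bR_{\mathfrak{m}})$ and $\rad(dR_{\mathfrak{m}})$ in the other. The only difference is that you spell out a couple of intermediate steps (e.g.\ that $\mathfrak{p}R_{\mathfrak{m}} \cap R = \mathfrak{p}$) that the paper leaves implicit.
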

\begin{proof}
$(1) \Rightarrow (2)$ Let $\mathfrak{p},\mathfrak{q}$ be proper prime ideals such that $\mathfrak{p}+\mathfrak{q}\neq R$, $a\notin \mathfrak{p}$, $b\in \mathfrak{p}$, $c\notin \mathfrak{q}$ and $d\in \mathfrak{q}$. Let $\mathfrak{m}$ be a maximal ideal of $R$ extending $\mathfrak{p} + \mathfrak{q}$. Thus $\mathfrak{p} R_{\mathfrak{m}}$ and $\mathfrak{q}R_{\mathfrak{m}}$ are ideals of $R_{\mathfrak{m}}$. The former includes $b$ but not $a$, and the latter includes $d$ but not $c$. Thus $a \notin \rad (b R_{\mathfrak{m}})$ and $c \notin \rad (d R_{\mathfrak{m}})$.

$(2) \Rightarrow (1)$ There are two prime ideals of $R_{\mathfrak{m}}$, the former including $b$ but not $a$, and the latter including $d$ but not $c$. These ideals can be represented as $\mathfrak{p} R_{\mathfrak{m}}$ and $\mathfrak{q}R_{\mathfrak{m}}$ for some (unique) prime ideals $\mathfrak{p}$ and ${\mathfrak{q}}$ in ${\mathfrak{m}}$. Clearly $a \notin \mathfrak{p}$, $b \in \mathfrak{p}$, $c \notin \mathfrak{q}$ and $d \in \mathfrak{q}$.
\end{proof}

The second characterization directly refers to a Pr\"ufer domain
$R$. Its statement does not involve localization. We need the following premise, that should be well known.

\begin{lemma}\label{premise}
Suppose that $R$ is a Pr\"ufer domain, $\mathfrak{p}$ is a prime ideal and $r \in \mathfrak{p}$. Then  $\rad(r R)_\mathfrak{p}$ is a prime ideal of the localization  $R_\mathfrak{p}$.
\end{lemma}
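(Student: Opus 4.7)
The plan is to reduce to the case of a valuation domain, then exploit the fact that in such a ring the prime ideals form a chain under inclusion. Since $R$ is Pr\"ufer, the localization $R_\mathfrak{p}$ is a valuation domain, and because localization commutes with taking radicals one has $\rad(rR)_\mathfrak{p}=\rad(rR_\mathfrak{p})$. So it suffices to prove that, for any valuation domain $V$ and any non-unit $r\in V$, the ideal $\rad(rV)$ is a proper prime ideal of $V$.

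For this I would write $\rad(rV)$ as the intersection of all prime ideals of $V$ that contain $r$. In the valuation domain $V$ the set of prime ideals is totally ordered by inclusion, so the family being intersected is a chain. The key step is therefore to show that the intersection $\mathfrak{q}$ of any chain $\{\mathfrak{p}_i\}$ of prime ideals is itself prime: if $ab\in\mathfrak{q}$ and $a\notin\mathfrak{q}$, pick $\mathfrak{p}_{i_0}$ not containing $a$, so that primeness of $\mathfrak{p}_{i_0}$ forces $b\in\mathfrak{p}_{i_0}$; then for every $\mathfrak{p}_j$ in the chain, either $\mathfrak{p}_j\supseteq\mathfrak{p}_{i_0}$ (so $b\in\mathfrak{p}_j$ automatically) or $\mathfrak{p}_j\subseteq\mathfrak{p}_{i_0}$ (so $a\notin\mathfrak{p}_j$ and again $b\in\mathfrak{p}_j$ by primeness of $\mathfrak{p}_j$). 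Hence $b\in\mathfrak{q}$, so $\mathfrak{q}$ is prime.

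Properness of $\rad(rR_\mathfrak{p})$ is then immediate: because $r\in\mathfrak{p}$, the maximal ideal $\mathfrak{p}R_\mathfrak{p}$ is itself a prime of $R_\mathfrak{p}$ containing $r$, so $\rad(rR_\mathfrak{p})\subseteq\mathfrak{p}R_\mathfrak{p}\subsetneq R_\mathfrak{p}$. I do not foresee any real obstacle; the only mildly delicate point is the chain-intersection lemma, which is a routine commutative-algebra observation.
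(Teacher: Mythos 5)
Your proof is correct, but it takes a genuinely different route from the paper's. The paper works directly with elements: given $ab\in\rad(rR)_\mathfrak{p}$, i.e.\ $(ab)^n s\in rR$ for some $s\notin\mathfrak{p}$, it uses the total ordering of \emph{principal} ideals in the valuation domain $R_\mathfrak{p}$ to assume WLOG that $a\in bR_\mathfrak{p}$, and then observes $a^{2n}\in (ab)^n R_\mathfrak{p}\subseteq rR_\mathfrak{p}$, so $a\in\rad(rR)_\mathfrak{p}$. You instead appeal to the description of $\rad(rR_\mathfrak{p})$ as the intersection of all primes of $R_\mathfrak{p}$ containing $r$, use the total ordering of \emph{prime} ideals in a valuation domain, and invoke (with a correct self-contained proof) the lemma that an intersection of a chain of primes is prime. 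Both arguments exploit the same underlying feature of valuation domains (linearly ordered ideal lattice), just at different levels: the paper's is a shorter purely computational manipulation of radical membership, while yours is a slightly longer but more structural argument via the prime spectrum, and yours also spells out properness explicitly, which the paper leaves implicit. Either is perfectly acceptable for the purpose this lemma serves.
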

\begin{proof}
Suppose that $ab\in \rad(rR)_\mathfrak{p}$, i.e. $(ab)^n s\in rR$ for some $s\notin \mathfrak{p}$ and some positive integer $n$. We may assume that $a\in b R_\mathfrak{p}$, hence $a^{2n}\in r R_\mathfrak{p}$.
\end{proof}

\begin{prop}\label{rad2}
Let $R$ be a Pr\"ufer domain. Then the following are equivalent for $a, b, c, d \in R$:
\begin{enumerate}
\item $(a,b,c,d) \notin \DPR(R)$.
\item $(\rad (bR):a) + (\rad (dR) :c)$ is a proper ideal of $R$.
\end{enumerate}
\end{prop}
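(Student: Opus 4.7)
The plan is to reduce the statement to Proposition \ref{maximal} by interpreting the ``local" non-containments $a\notin\rad(bR_{\mathfrak{m}})$ and $c\notin\rad(dR_{\mathfrak{m}})$ as saying that the ideal quotients $(\rad(bR):a)$ and $(\rad(dR):c)$ are contained in a common maximal ideal.

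The technical core is the following lemma, which I would prove first: for every $a,b\in R$ and every maximal ideal $\mathfrak{m}$,
\[
a\notin \rad(bR_{\mathfrak{m}}) \iff (\rad(bR):a)\subseteq \mathfrak{m}.
\]
For the forward implication, I would contrapositively pick $s\in (\rad(bR):a)\setminus\mathfrak{m}$, so that $as\in\rad(bR)$, i.e.\ $(as)^n=a^ns^n\in bR$ for some $n\geq 1$; since $s^n$ is a unit in $R_{\mathfrak{m}}$, this gives $a^n\in bR_{\mathfrak{m}}$ and hence $a\in\rad(bR_{\mathfrak{m}})$. For the converse, starting from $a^n\in bR_{\mathfrak{m}}$, I would clear denominators to obtain $s\notin\mathfrak{m}$ with $sa^n\in bR$, and then observe $(as)^n = s^{n-1}(sa^n)\in bR$, so that $as\in\rad(bR)$ and $s\in(\rad(bR):a)\setminus\mathfrak{m}$. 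Note this step is purely ring-theoretic and does not require $R$ to be Pr\"ufer.

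With the lemma in hand, the main equivalence is immediate from Proposition \ref{maximal}. Indeed, condition (1) is equivalent, by that proposition, to the existence of a maximal ideal $\mathfrak{m}$ with $a\notin\rad(bR_{\mathfrak{m}})$ and $c\notin\rad(dR_{\mathfrak{m}})$. Applying the lemma to both pairs, this is the same as the existence of a maximal ideal $\mathfrak{m}$ containing both $(\rad(bR):a)$ and $(\rad(dR):c)$, and hence their sum. Such an $\mathfrak{m}$ exists precisely when $(\rad(bR):a)+(\rad(dR):c)$ is a proper ideal of $R$, which is condition (2).

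I do not anticipate a serious obstacle: Proposition \ref{maximal} already provides the bridge between the $\DPR$ condition and a localized statement, and the rest is a clean manipulation of radicals and ideal quotients. The only subtle bookkeeping is the handling of exponents when passing between $as\in\rad(bR)$ and $a^ns^n\in bR$, but this is straightforward. Lemma \ref{premise} is not needed for this argument; it presumably plays a role elsewhere in exploiting the Pr\"ufer hypothesis.
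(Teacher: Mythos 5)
Your proof is correct, and it takes a genuinely different (and more elementary) route than the paper's. The paper proves $(2)\Rightarrow(1)$ by choosing a maximal ideal $\mathfrak{m}\supseteq(\rad(bR):a)+(\rad(dR):c)$ and then \emph{directly constructing} the witnessing primes as $\mathfrak{p}=\rad(bR)_{\mathfrak{m}}\cap R$ and $\mathfrak{q}=\rad(dR)_{\mathfrak{m}}\cap R$; for this it invokes Lemma \ref{premise}, which is exactly where the Pr\"ufer hypothesis enters, since $\rad(bR_{\mathfrak{m}})$ being prime relies on $R_{\mathfrak{m}}$ being a valuation domain. The converse is a similar localization argument. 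You instead prove the purely ring-theoretic equivalence $a\notin\rad(bR_{\mathfrak{m}})\iff(\rad(bR):a)\subseteq\mathfrak{m}$ (your exponent bookkeeping in both directions is fine: $s\notin\mathfrak{m}$ gives $s^n\notin\mathfrak{m}$ since $\mathfrak{m}$ is prime, and clearing denominators works as you say) and then simply compose with Proposition \ref{maximal}, whose proof, as you note, only uses the standard correspondence between primes of $R_{\mathfrak{m}}$ and primes of $R$ inside $\mathfrak{m}$. What your route buys is generality and simplicity: the Pr\"ufer hypothesis and Lemma \ref{premise} are not needed at all, so your argument establishes Proposition \ref{rad2} for arbitrary commutative rings, not just Pr\"ufer domains. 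The paper's approach, by contrast, produces explicit primes $\mathfrak{p},\mathfrak{q}$ with $\mathfrak{p}\subseteq\mathfrak{q}$ or $\mathfrak{q}\subseteq\mathfrak{p}$, which is more in the spirit of the valuation-theoretic analysis running through the rest of the paper, but at the cost of a redundant hypothesis here.
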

\begin{proof}
(2) $\Ra$ (1) Let $I= (\rad (bR) : a)$, $J= (\rad (dR): c)$ and choose a maximal ideal $\mathfrak{m}$ containing $I+ J$. The localization $R_{\mathfrak{m}}$ is a commutative valuation domain.

Set $\mathfrak{p} = \rad(b)_{\mathfrak{m}} \cap R$ and $\mathfrak{q} = \rad(d)_{\mathfrak{m}}\cap R$, both are prime ideals of $R$. By the definition we have $b\in \rad(bR)\seq \mathfrak{p} $ and $d\in \rad (dR)\seq \mathfrak{q} $.

Assume $1 \in \mathfrak{p} + \mathfrak{q}$, so $1 = u + v$ with
$u, v \in R$, $s u \in \rad(bR)$ and $t v \in \rad(dR)$ for some $s, t \in R \backslash \mathfrak{m}$. Then $su \in (\rad(bR) : a) \seq \mathfrak{m}$, whence $u \in \mathfrak{m}$. Similarly $v \in \mathfrak{m}$. But then $1 \in \mathfrak{m}$. Thus we have proved that $\mathfrak{p} + \mathfrak{q} \neq R$.

Suppose, by a way of contradiction, that $a\in \mathfrak{p} $. This means that $as\in \rad(bR)$ for some $s \notin \mathfrak{m}$, i.e.
$s\in (\rad (bR):a)$. By the definition $s\in I\seq \mathfrak{m}$, a contradiction. Thus $a\notin \mathfrak{p}$, and similarly we
conclude that $c \notin \mathfrak{q}$.

(1) $\Ra$ (2) Suppose that (1) holds but $(\rad (bR):a)+ (\rad (dR): b)= R$.

Since $\mathfrak{p} + \mathfrak{q}  \subset R$ choose a maximal ideal $\mathfrak{m}$ containing $\mathfrak{p} , \mathfrak{q}$ and localize: let $V$ be the commutative valuation domain $R_{\mathfrak{m}} $. Without loss of generality we may assume that $(\rad (bR) :a)_{\mathfrak{m}}= V$, i.e.
$as\in \rad(bR)$ for some $s \notin \mathfrak{m}$. It follows that $a^n\cdot s^n\in bR\seq \mathfrak{p}$ for some $n$. Since
$s^n\notin \mathfrak{m}$, we conclude that $a\in \mathfrak{p}$, a contradiction.
\end{proof}

Here is the third characterization.

\begin{prop}\label{Xisrec}
Let $R$ be a Pr\"{u}fer domain. The following are equivalent for $a, b, c, d \in R$:
\begin{enumerate}
\item $(a,b,c,d)\in \DPR(R)$,
\item $(xb=0 \, / \, d \midd x) \subseteq (xa=0 \, /  \, x=0) \cup
(x=x \, / \, c \midd x)$.
\end{enumerate}
\end{prop}
\begin{proof}
Suppose $(a,b,c,d) \in \DPR(R)$. Let $N \in (xb=0 \, / \, d \midd x)$. Then $b\in \Ass N$, $d\in\Div N$ and $\Ass N+\Div N\neq R$. Thus either $a\in \Ass N$ or $c\in\Div N$. So either
$N\in \left(xa=0 \, / \, x=0\right)$ or $N\in \left(x=x \, / \, c \midd x \right)$.

Conversely suppose that (2) holds and that $\mathfrak{p},\mathfrak{q}$ are prime ideals such that $\mathfrak{p}+\mathfrak{q}\neq R$, $b\in\mathfrak{p}$, $d\in\mathfrak{q}$ and $a\notin\mathfrak{p}$. We need to show that $c\in \mathfrak{q}$.

Let $\mathfrak{m}$ be a maximal ideal of $R$ containing $\mathfrak{p}+\mathfrak{q}$. Then the indecomposable pure injective $R_\mathfrak{m}$-module $N$ corresponding to the pair $(\mathfrak{p}R_{\mathfrak{m}},\mathfrak{q}R_{\mathfrak{m}})$ is in $(xb=0 \, / \, d
\midd x)$ over $R_\mathfrak{m}$. Since $a \notin \mathfrak{p}$, $a\notin \mathfrak{p}R_{\mathfrak{m}}$. Thus $N\notin \left(xa=0 \, / \, x=0\right)$. Therefore $N\in \left(x=x \, / \, c \midd x\right)$. So $c\in \mathfrak{q}R_{\mathfrak{m}}$. Thus $c\in\mathfrak{q}$ as required.
\end{proof}

\begin{theorem}\label{decforbezbase}
Let $R$ be an effectively given B\'ezout domain with all its residue fields infinite. Then the common theory $T_R$ of all $R$-modules is decidable if and only if there is an algorithm which, given $a,b,c,d\in R$ answers whether $(a,b,c,d) \in \DPR (R)$ or not.
\end{theorem}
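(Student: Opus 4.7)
The plan is to prove the biconditional direction by direction.

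\textbf{Forward direction.} Assume $T_R$ is decidable. By Lemma~\ref{Xisrec}, $(a,b,c,d)\in \DPR(R)$ iff the basic inclusion
\[
(xb = 0 \,/\, d \mid x) \subseteq (xa = 0 \,/\, x = 0)\cup (x=x \,/\, c \mid x)
\]
holds in $\Zg_R$. Since all residue fields are infinite, each elementary invariant is $1$ or $\infty$, so by Baur--Monk such a basic-open inclusion is equivalent to a sentence of $T_R$, and the translation is effective because $R$ is effectively given. Decidability of $T_R$ thus yields an algorithm for $\DPR(R)$.

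\textbf{Backward direction.} Assume $\DPR(R)$ is decidable. I would again invoke the Baur--Monk framework (combined with the infinite-residue-field hypothesis) to reduce the decision of $T_R$ to effectively checking inclusions $(\star)$ between basic open sets of $\Zg_R$. By the cumulative reductions of Sections~3--5, the verification of $(\star)$ reduces effectively to checking inclusions of the form
\[
\mathcal{U} \cap \mathcal{W}_1 \cap \mathcal{W}_2 \subseteq \bigcup_{i=1}^n \mathcal{W}_{1, h_i, g_i} \cup \bigcup_{j=1}^m \mathcal{W}_{\mu_j, 0, 0},
\]
where $\mathcal{U}$ is either $\mathcal{W}_{d, 0, 0}$ or $\mathcal{W}_{1, b, c}$, and $\mathcal{W}_1, \mathcal{W}_2$ have the form $(x\alpha = 0 \,/\, x = 0)$ or $(x = x \,/\, \delta \mid x)$, encoding respectively conditions $\alpha \in \Ass N$ or $\delta \in \Div N$.

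Using Lemma~\ref{prime} and the decomposition of $\Zg_R$ as the disjoint union of the fibres $X_{\mathfrak{p}, \mathfrak{q}}$ (with $\mathfrak{p} + \mathfrak{q} \neq R$), I would adapt Proposition~\ref{mainprop} so as to treat the extra intersection $\mathcal{W}_1 \cap \mathcal{W}_2$ as further admissibility constraints ``$\alpha \in \mathfrak{p}$'' and/or ``$\delta \in \mathfrak{q}$''; each such $\mathcal{W}_k$ meets $X_{\mathfrak{p}, \mathfrak{q}}$ either in the whole fibre or in the empty set. Combined with Lemmas~\ref{incpneqq} and~\ref{incpeqq} and the fact that $\leq_{\mathfrak{p} \cap \mathfrak{q}}$ totally orders principal ideals, this shows the displayed inclusion is equivalent to a statement of the form ``for every pair $(\mathfrak{p}, \mathfrak{q})$ of primes with $\mathfrak{p} + \mathfrak{q} \neq R$ satisfying finitely many membership conditions, a finite disjunction of further membership conditions and $\leq_{\mathfrak{p} \cap \mathfrak{q}}$-comparisons holds''. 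By Lemma~\ref{purposeofgamma}, each comparison $a \leq_{\mathfrak{p} \cap \mathfrak{q}} b$ rephrases as ``$\gamma(a,b) \notin \mathfrak{p}$ or $\gamma(a,b) \notin \mathfrak{q}$'', so the whole condition becomes a Boolean combination of prime-membership statements.

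Each atomic clause then has exactly the shape ``$\forall \mathfrak{p}, \mathfrak{q}$ with $\mathfrak{p} + \mathfrak{q} \neq R$: $a \in \mathfrak{p}$ or $b \notin \mathfrak{p}$ or $c \in \mathfrak{q}$ or $d \notin \mathfrak{q}$'', i.e.\ is a $\DPR$-query, hence decidable by hypothesis; a Boolean combination of decidable conditions is decidable. Since $R$ is effectively given, all the intermediate syntactic manipulations -- producing Tuganbaev witnesses $\alpha, r, s, \delta, t, u$ from Fact~\ref{tuganbaev}, computing $\gcd$, $\lcm$ and $\gamma$, and enumerating the finitely many cases -- are effective. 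The main obstacle I expect is the combinatorial bookkeeping needed in clause~(3) of Proposition~\ref{mainprop}: the disjunctive structure indexed over the $i$'s and $j$'s must be carefully re-organised, separately in the cases $\mathfrak{p} \neq \mathfrak{q}$ and $\mathfrak{p} = \mathfrak{q}$, and the additional constraints coming from $\mathcal{W}_1 \cap \mathcal{W}_2$ must be integrated, so that the resulting ``$\forall\,\mathfrak{p}, \mathfrak{q}\ \exists\,i,j$'' schema is systematically converted into a finite conjunction of elementary four-variable $\DPR$-queries.
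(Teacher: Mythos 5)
Your proposal follows the same route as the paper's proof: an identical forward direction via Lemma~\ref{Xisrec} and effectivity of the Baur--Monk reduction, and the same chain of reductions (Sections~3--5, then Lemmas~\ref{incpneqq}, \ref{incpeqq}, \ref{purposeofgamma} and Proposition~\ref{mainprop}) for the backward direction. The step you compress into ``a Boolean combination of decidable conditions is decidable'' is in fact the one place that needs real care, and it is precisely the content of the paper's Lemma~\ref{pairprimes}: a sentence of the form ``for all primes $\mathfrak{p},\mathfrak{q}$ with $\mathfrak{p}+\mathfrak{q}\neq R$, $\Delta(\mathfrak{p},\mathfrak{q})$ holds'' with $\Delta$ a Boolean combination of membership conditions is \emph{not} automatically a Boolean combination of $\DPR$-queries, because the universal quantifier over primes does not commute with disjunction; one must first put $\Delta$ in conjunctive normal form, distribute the quantifier over the conjuncts, and then collapse the several ``$\in$'' clauses of each disjunct into a single element by taking products and the several ``$\notin$'' clauses by taking $\gcd$, so that each conjunct becomes a genuine four-variable $\DPR$-query. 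That $\gcd$-collapse is exactly where the B\'ezout hypothesis is used, and its unavailability over general Pr\"ufer domains is why \S 7 has to introduce the whole family $\DPR_n$.
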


The proof uses the following notion: a \textbf{condition on a pair of prime ideals} is a condition of the form $a\in P$ or $b\in Q$ where $a,b\in R$ and $(P,Q)$ is a variable for pair of prime ideals. We will say that a pair of prime ideals $(\mathfrak{p},\mathfrak{q})$ satisfies the condition $a\in P$ if $a\in\mathfrak{p}$ and satisfies the condition $b\in Q$ if $b\in \mathfrak{q}$.


\begin{lemma}\label{pairprimes}
Let $R$ be a(n effectively given) B\'ezout domain, and $\Delta$ be a Boolean combination of conditions on a pair of prime ideals.  If $\DPR(R) \subseteq R^4$ is recursive, then there is an algorithm which answers whether for all prime ideals $\mathfrak{p},\mathfrak{q}$, $\mathfrak{p}+\mathfrak{q}\neq R$ implies that $(\mathfrak{p},\mathfrak{q})$ satisfies $\Delta$.
\end{lemma}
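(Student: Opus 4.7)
The plan is to reduce the question about $\Delta$ to finitely many queries of the (recursive) predicate $\DPR(R)$ through standard Boolean manipulations, exploiting the B\'ezout property to merge multiple literals of the same kind.

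The first move is to dualize: $\Delta$ holds on every pair $(\mathfrak{p},\mathfrak{q})$ with $\mathfrak{p}+\mathfrak{q}\neq R$ if and only if $\neg\Delta$ holds on no such pair. I would then effectively compute a disjunctive normal form $\neg\Delta\equiv\bigvee_{k=1}^{n}\Gamma_k$, where each $\Gamma_k$ is a conjunction of literals of the form $a\in P$, $b\notin P$, $c\in Q$, or $d\notin Q$. The original question splits into $n$ separate subquestions: for each $k$, does some pair $(\mathfrak{p},\mathfrak{q})$ with $\mathfrak{p}+\mathfrak{q}\neq R$ satisfy $\Gamma_k$?

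The next step is to collapse each $\Gamma_k$ into a canonical shape with a single literal of each of the four kinds. Using the B\'ezout identity, a conjunction $a_1\in P\wedge\cdots\wedge a_r\in P$ is equivalent to $\gcd(a_1,\ldots,a_r)\in P$, because in a B\'ezout domain $a_1 R+\cdots+a_r R=\gcd(a_1,\ldots,a_r)R$ and a prime ideal contains each $a_i$ iff it contains this generator. Using the primeness of $P$, a conjunction $b_1\notin P\wedge\cdots\wedge b_s\notin P$ is equivalent to $b_1\cdots b_s\notin P$. Both reductions are effective because $R$ is effectively given, and the analogous moves work for $Q$. Missing literals are supplied harmlessly by $0\in P$ or $1\notin P$, both of which hold in every proper prime of $R$. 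After these manipulations each $\Gamma_k$ is equivalent to
\[
(a\in P)\wedge(b\notin P)\wedge(c\in Q)\wedge(d\notin Q)
\]
for suitable $a,b,c,d\in R$ that can be computed from $\Gamma_k$.

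Finally, unwinding the definition of $\DPR$ shows that ``no pair $(\mathfrak{p},\mathfrak{q})$ with $\mathfrak{p}+\mathfrak{q}\neq R$ satisfies this conjunction'' translates (after matching the corresponding literals) exactly to the statement $(b,a,d,c)\in\DPR(R)$. Running the given algorithm for $\DPR$ on each of the finitely many resulting tuples, and returning ``yes'' precisely when every reply is positive, furnishes the required decision procedure. No essential obstacle arises; the only slightly delicate point is the bookkeeping around empty conjunctions, which is handled by the $0\in P$/$1\notin P$ padding described above.
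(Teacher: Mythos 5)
Your proof is correct and is essentially the De Morgan dual of the paper's argument: the paper puts $\Delta$ in conjunctive normal form and reduces each clause to a single $\DPR$ query (products for the $\in$-literals, gcd for the $\notin$-literals), while you put $\neg\Delta$ in disjunctive normal form and reduce each conjunct the same way with the roles swapped. The underlying reductions — using primeness of $\mathfrak{p}$ to merge via products and the B\'ezout identity to merge via gcd — are identical, as is the final translation into a finite set of $\DPR$ queries, so the two proofs are the same up to a negation.
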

\begin{proof}
By putting $\Delta$ into conjunctive normal form, we may assume that $\Delta$ is of the form $\bigwedge_{h=1}^m\Delta_h$ where
\[\Delta_h:=\bigvee_{i\in I_h}a_{hi}\in P\vee\bigvee_{j\in J_h} b_{hj}\notin P \vee \bigvee_{k\in K_h} c_{hk}\in Q \vee \bigvee_{l\in L_h} d_{hl}\notin Q\] where $I_h,J_h,K_h,L_h$ are finite sets and $a_{hi}, b_{hj}, c_{hk}, d_{hl}\in R$.

A pair of prime ideals $(\mathfrak{p},\mathfrak{q})$ satisfies $\Delta_h$ if and only if
\[\prod_{i\in I_h}a_{hi}\in \mathfrak{p} \text{ or } \gcd(b_{hj})_{j\in J_h}\notin \mathfrak{p} \text{ or } \prod_{i\in K_h}c_{hk}\in \mathfrak{q} \text{ or }\gcd(d_{hl})_{l\in L_h}\notin \mathfrak{q}.\]

Therefore, for all pairs of prime ideals $(\mathfrak{p},\mathfrak{q})$, $\mathfrak{p}+\mathfrak{q}\neq R$ implies $(\mathfrak{p},\mathfrak{q})$ satisfies $\Delta$ if and only if
\[(\prod_{i\in I_h}a_{hi}, \gcd(b_{hj})_{j\in J_h}, \prod_{k\in K_h}c_{hk}, \gcd(d_{hl})_{l\in L_h})\in \DPR(R)\] for all $1\leq h\leq m$.
\end{proof}


\begin{proof}[Proof of \ref{decforbezbase}]
The forward direction follows from Proposition \ref{Xisrec}. In fact, using it, one can check membership to $\DPR$ provided that one
can decide inclusions like $(\star)$.

Now suppose that $\DPR(R) \subseteq R^4$ is recursive. We look for an algorithm deciding inclusions of basic open sets of $\Zg_R$ \[\left( x=x / xd=0 \right) \cap \mathcal{W}_1\cap\mathcal{W}_2
\subseteq \bigcup_{i=1}^n \left(\phi_i / \psi_i \right),\]
\[\left(xb=0 / c\midd x \right) \cap \mathcal{W}_1\cap\mathcal{W}_2 \subseteq \bigcup_{i=1}^n \left(\phi_i / \psi_i \right)\]
as at the end of $\S$ 4 -- hence $\mathcal{W}_1$ and $\mathcal{W}_2$ are of the form $\left( x \alpha = 0 / x=0 \right)$ or $\left( x=x / \delta \midd x \right)$, $c, d \neq 0$, $b \neq 0$ (otherwise $xb = 0$ gets equivalent to $x=x$) and, for every $i = 1, \ldots, n$, $(\phi_i / \psi_i)$ can be assumed to equal
either $(x=x / x d_i = 0)$ or $(x b_i = 0 / c_i \midd x)$.

Considering these inclusions intersected with $X_{\mathfrak{p},\mathfrak{q}}$
where $\mathfrak{p}$ and $\mathfrak{q}$ are (possibly equal) prime ideals of $R$ and $\mathfrak{p}+\mathfrak{q}\neq R$, it is enough to effectively decide, given $\alpha_1, \alpha_2, \beta_1, \beta_2 \in R$ and $d, b_i, c_i, d_i$ also in $R$, whether, for all prime ideals $\mathfrak{p},\mathfrak{q}$ satisfying the previous assumptions and $\alpha_1,\alpha_2\in\mathfrak{p}$, $\beta_1,\beta_2\in \mathfrak{q}$,
\[\left(\frac{x=x}{xd=0}\right) \cap X_{\mathfrak{p},\mathfrak{q}}\subseteq \bigcup_{i=1}^n \left(\frac{\phi_i}{\psi_i}\right) \cap X_{\mathfrak{p},\mathfrak{q}} \]
and, given $\alpha_1,\alpha_2,\beta_1,\beta_2\in R$ and $b, c, b_i, c_i, d_i \in R$, whether for all prime ideals $\mathfrak{p},\mathfrak{q}$ satisfying the same condition as before
\[\left(\frac{xb=0}{c\midd x}\right)\cap X_{\mathfrak{p},\mathfrak{q}}\subseteq \bigcup_{i=1}^n \left(\frac{\phi_i}{\psi_i} \right) \cap X_{\mathfrak{p},\mathfrak{q}}.\]
These cases can be effectively handled because $\DPR(R)$ is recursive.

In view of Lemma \ref{purposeofgamma}, Proposition \ref{mainprop} implies that, in the more general setting corresponding to $\lambda, g, h\in R$, $g_i, h_i\in R$ ($1\leq i\leq n$) and $\mu_j \in R$ ($1 \leq j \leq m$ with $\lambda, \mu_j
\neq 0$ we can decide whether
\[\mathcal{W}_{\lambda, h, g}\subseteq \bigcup_{i=1}^n\mathcal{W}_{1, h_i, g_i}\cup \bigcup_{j=1}^m \mathcal{W}_{\mu_j,0,0} , \] or also whether
\[\mathcal{W}_{\lambda, h, g} \cap X_{\mathfrak{p}, \mathfrak{q}} \subseteq \left (\bigcup_{i=1}^n\mathcal{W}_{1, h_i, g_i} \cup \bigcup_{j=1}^m \mathcal{W}_{\mu_j,0,0} \right) \cap X_{\mathfrak{p}, \mathfrak{q}} \]
for all prime ideals $\mathfrak{p}$ and $\mathfrak{q}$ of $R$ with
$\mathfrak{p} + \mathfrak{q} \neq R$, if we can effectively decide whether for all $\mathfrak{p},\mathfrak{q}$ as before,
$\mathfrak{p} + \mathfrak{q}\neq R$ implies that a particular condition on a pair of prime ideals holds. By Lemma \ref{pairprimes}
and the hypothesis that $\DPR(R)$ is recursive in $R^4$ we can effectively decide such conditions.
\end{proof}

As a consequence we get the following strengthening of Theorem \cite[Thm. 3.4]{LPT} -- a key step towards the decidability result for
the theory of modules over the ring of algebraic integers.
\begin{cor}\label{krull1}
Let $R$ be an effectively given B\'ezout domain of Krull dimension $1$ all of whose residue fields are infinite. The theory of $R$-modules is decidable.
\end{cor}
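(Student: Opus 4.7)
The plan is to apply Theorem \ref{decforbezbase}: under the stated hypotheses it suffices to prove that $\DPR(R)\seq R^4$ is recursive. Since $R$ has Krull dimension one, every nonzero prime of $R$ is maximal, so for primes $\mathfrak{p},\mathfrak{q}$ of $R$ the condition $\mathfrak{p}+\mathfrak{q}\neq R$ is equivalent to saying that one of them is $(0)$ or that they coincide in a single maximal ideal $\mathfrak{m}$. A straightforward case split on whether $b$ or $d$ is zero, together with the facts that in a B\'ezout domain $b,d\in\mathfrak{m}$ iff $\gcd(b,d)\in\mathfrak{m}$, and $a\in\mathfrak{m}$ or $c\in\mathfrak{m}$ iff $ac\in\mathfrak{m}$ (since $\mathfrak{m}$ is prime), reduces deciding $(a,b,c,d)\in\DPR(R)$ to finitely many questions of the form
\[
\text{``is } f\in\rad(eR)\text{?'' for given } e\in R\sm\{0\} \text{ and } f\in R,
\]
together with trivial equality-to-zero tests. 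In the typical case $b,d\neq 0$ the condition is exactly $ac\in\rad(\gcd(b,d)R)$; the remaining degenerate cases are either trivial or reduce to a single radical membership question such as $a\in\rad(bR)$ or $c\in\rad(dR)$.

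The main obstacle, and the heart of the argument, is then to decide the single prime radical relation $f\in\rad(eR)$ for $e\in R\sm\{0\}$ in an effectively given Krull-dimension-one B\'ezout domain. Set $S:=R/eR$. Since $R$ has Krull dimension one and $e\neq 0$, every prime of $R$ containing $e$ is maximal, so every prime of $S$ is maximal, i.e.\ $S$ has Krull dimension zero; hence the nilradical and the Jacobson radical of $S$ coincide. Therefore $f\in\rad(eR)$ is equivalent both to $\bar f:=f+eR$ being nilpotent in $S$ and to $\bar f$ lying in the Jacobson radical of $S$. I would exploit this double characterization by running two semi-decision procedures in parallel:
\begin{itemize}
\item to certify $f\in\rad(eR)$, enumerate $n\geq 1$ and test the effectively decidable divisibility $e\mid f^n$;
\item to certify $f\notin\rad(eR)$, enumerate $x\in R$ and test whether $1-xf$ fails to be a unit in $S$; by B\'ezoutness the latter is the effectively decidable condition that $\gcd(1-xf,e)$ does not divide $1$ in $R$.
\end{itemize}
By the equivalence ``nilpotent $\iff$ in Jacobson radical'', whichever of the two answers is correct, the corresponding search must halt, so the parallel procedure is a decision procedure for $f\in\rad(eR)$.

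Assembling everything: on input $(a,b,c,d)\in R^4$ the algorithm performs the finite case split of the first paragraph and invokes the single-radical decider on the resulting instances, thereby returning the truth value of $(a,b,c,d)\in\DPR(R)$. By Theorem \ref{decforbezbase} this yields the decidability of $T_R$.
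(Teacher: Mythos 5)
Your proof is correct and follows the same overall strategy as the paper: invoke Theorem~\ref{decforbezbase}, reduce $\DPR(R)$ over a Krull-dimension-one B\'ezout domain to a finite case analysis ending in instances of the single prime radical relation $f\in\rad(eR)$, and then decide that relation. The one genuine difference is in the last step. The paper outsources it, simply citing \cite[Lemma 3.3]{LPT} for the decidability of $a\in\rad(bR)$ in Krull dimension one, whereas you supply a self-contained decision procedure. Your procedure is the natural one and is sound: since $e\neq 0$ and $\dim R=1$, the quotient $S=R/eR$ is zero-dimensional, so its nilradical and Jacobson radical coincide; consequently the semi-decision procedure that searches for $n$ with $e\mid f^n$ and the semi-decision procedure that searches for $x\in R$ with $\gcd(1-xf,e)$ a non-unit are complementary, and running them in parallel always halts with the correct answer. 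The verification that non-invertibility of $1-xf$ modulo $e$ amounts to $\gcd(1-xf,e)$ not dividing $1$ uses B\'ezoutness in exactly the right way. So what you gain over the paper's version is a proof of the corollary that does not depend on \cite{LPT}; the underlying mathematics, including the case split on degenerate $\mathfrak{p}$ or $\mathfrak{q}$ (which you phrase via $b,d$ being zero, but which works out to the paper's three conditions and the identity $ac\in\rad(\gcd(b,d)R)$), is the same.
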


\begin{proof}
By \cite[Lemma 3.3]{LPT} (using the Krull dimension 1 hypothesis) the prime radical relation $a \in \rad (bR)$ can be decided effectively for $a, b \in R$.

On the other hand Theorem \ref{decforbezbase}, when applied to a B\'ezout domain of Krull dimension 1, just says that the theory of $R$-modules is decidable if and only if there is an algorithm which given $a, b \in R$ decides whether $a \in \rad (bR)$.

Let us explain why. Since $R$ has Krull dimension $1$, all non-zero prime ideals are maximal. Thus if $\mathfrak{p},\mathfrak{q}$ are non-zero prime ideals then $\mathfrak{p}+\mathfrak{q} \neq R$ if and only if $\mathfrak{p}=\mathfrak{q}$. It follows that, for $a, b, c, d \in R$, $(a,b,c,d) \in \DPR(R)$  if and only if the following conditions hold:
\begin{enumerate}
\item for all prime ideals $\mathfrak{p}$, $a\in\mathfrak{p}$, $b\notin\mathfrak{p}$, $c\in\mathfrak{p}$ or $d\notin\mathfrak{p}$
\item for all prime ideals $\mathfrak{p}$, $a\in\mathfrak{p}$, $b\notin\mathfrak{p}$, $c=0$ or $d\neq 0$
\item for all prime ideals $\mathfrak{p}$, $a=0$, $b\neq 0$, $c\in\mathfrak{p}$ or $d\notin\mathfrak{p}$.
\end{enumerate}

The first condition is equivalent to $ac\in\mathfrak{p}$ or $\gcd(b,d)\notin\mathfrak{p}$ for all $\mathfrak{p}$, which is equivalent in its turn to $ac\in\rad(\gcd(b,d)R)$.

The second condition is equivalent to $a\in\rad(bR)$ or $c=0$ or $d\neq 0$.

The third condition is equivalent to $a=0$, $b\neq 0$, or $c\in\rad(dR)$.
\end{proof}


\begin{exam}\label{ex1}
Corollary \ref{krull1} also applies to the B\'ezout domain $R$ associated by the Kaplansky-Jaffard-Ohm construction to the subgroup $\Gamma$ of $\mathbb{Z}^{\mathbb{N}}$ consisting of the eventually constant sequences (see \cite[Ex. 5.5 p. 114]{F-S}) and to an infinite effectively given field $K$. Recall that $\Gamma$ and indeed $\mathbb{Z}^{\mathbb{N}}$ are lattice ordered abelian groups under the pointwise order relation. The elements of $\Gamma$ can be represented as finite ordered sequences (of any length $t+1$) of integers $(a_0, \ldots, a_{t-1}, a_t)$, meaning that the corresponding infinite sequences stabilize to $a_t$ after $t$ terms. On this basis it is easily seen that $\Gamma$ is effectively given as a lattice ordered abelian group. In particular the lattice operation $\land$ reduces to taking the minimum pointwise.
By Proposition \ref{group} $R$ itself is effectively given. Moreover it has Krull dimension 1 (see again \cite[p. 113]{F-S}) and infinite residue fields. As said, this also implies that the radical relation is recursive.

In our specific case, for $a, b \in R$, $a \in \rad(bR)$ holds if and only if there is some positive integer $n$ such that $n c(a) = c(a^n) \geq c(b)$, where $c$ denotes content. Checking this condition on the finite sequences of integers $(a_0, \ldots, a_t)$, $(b_0, \ldots, b_{t'})$ corresponding to $c(a)$, $c(b)$ and with respect to the pointwise order relation is a straightforward procedure.
\end{exam}

\begin{exam}\label{ex2}
A similar case, but with Krull dimension $>1$, is that of \cite[Ex. 6.7 p. 119]{F-S}, see also \cite{BCM74}. This time $\Gamma$ is introduced as a lattice ordered subgroup of the direct product $\mathbb{Z}^{\mathbb{N}}$ ordered by putting, for every $r = (r_n)_{n \in \mathbb{N}}$, $r \geq 0$ if and only if either $r_0 > 0$ and $r_n \geq 0$ for all $n \geq 2$, or $r_0 = 0$ and $r_n \geq 0$ for all $n \geq 1$ (thus the first two components are ordered lexicographically, hence totally, while their pairs and the remaining components are ordered pointwise). To be precise, let $\Gamma$ be the subgroup generated by the direct sum $\mathbb{Z}^{(\mathbb{N} \backslash \{ 0 \})}$ and the element $s = (1, 0, 1, 0, \ldots)$ with the induced ordering. Then the elements of $\Gamma$ have the form $(r_0, r_1, r_2 + r_0, r_3, r_4 + r_0, \ldots, 0, r_0, 0, r_0, \ldots )$. It follows that $\Gamma$ is again effectively given as a lattice ordered group, whence the associated B\'ezout domain (with respect to an effectively given field $K$) is also effectively given. Actually $R$ is presented in \cite{BCM74} as an example of a B\'ezout domain which is not adequate but each non-zero prime ideal is contained in a unique maximal ideal. Indeed every non-zero prime ideal is also maximal, with exactly one exception, given by a chain of length 2 of non-zero prime ideals $\mathfrak{p}_0 \subsetneq \mathfrak{p}_1$ with $\mathfrak{p}_1$ maximal. Notice that these ideals
$\mathfrak{p_0}$ and $\mathfrak{p}_1$ are explicitly described in terms of
the associated prime filters in the positive part $\Gamma^+$ of $\Gamma$
(see \cite{BCM74}) via the correspondence between these two settings (as explained, for instance, in \cite[Prop. 4.6 p. 110]{F-S} and \cite[pp. 196-199]{Gi}). In fact, both these prime filters and the correspondence
itself between ideals of $R$ and filters of $\Gamma^+$ can be in their turn effectively described with respect to the explicit representation of elements of $\Gamma$ and $R$.

Now observe that, for $\mathfrak{p}$ and $\mathfrak{q}$ prime ideals of $R$,
$\mathfrak{p} + \mathfrak{q} \neq R$ if and only if either $\mathfrak{p} = \mathfrak{q}$, or $\mathfrak{p} =0$, or $\mathfrak{q} = 0$, or $\mathfrak{p} = \mathfrak{p}_0$ and $\mathfrak{q} = \mathfrak{p}_1$, or vice versa $\mathfrak{p} = \mathfrak{p}_1$ and $\mathfrak{q} = \mathfrak{p}_0$. It follows that, for
$a, b, c, d \in R$, $(a, b, c, d) \notin \DPR(R)$ (that is, there are $\mathfrak{p}$ and $\mathfrak{q}$ such that $\mathfrak{p} + \mathfrak{q} \neq R$, $a \notin \mathfrak{p}$, $b \in \mathfrak{p}$, $c \notin \mathfrak{q}$ and $d \in \mathfrak{q}$) if and only if either
\begin{enumerate}
\item there is a prime ideal $\mathfrak{p} \neq 0$ containing $b, d$ and excluding $a, c$, or
\item there is a prime ideal $\mathfrak{p} \neq 0$ containing $b$ and not $a$, and $c \neq 0$, $d= 0$ (or a similar condition swapping $a, b$ and $c, d$), or
\item $a \notin \mathfrak{p}_0$, $b \in \mathfrak{p}_0$, $c \notin \mathfrak{p}_1$, $d \in \mathfrak{p}_1$ (or again a similar condition swapping $a, b$ and $c, d$).
\end{enumerate}
The first two cases can be handled as in Example \ref{ex1}. The third can be checked using the effective representations of $\mathfrak{p}_0$ and
$\mathfrak{p}_1$.

Therefore $\DPR$ can be effectively checked provided that the radical relation $a \in \rad(b R)$ is recursively answered for $a, b \in R$. But this can be done more or less as in the previous case. Moreover, choosing $K$ infinite ensures that the residue fields of $R$ are infinite.

Note that the same argument applies to every effectively given B\'ezout domain $R$ such that residue fields are infinite, almost all prime
ideals are maximal, the finitely many other prime ideals are
contained in only finitely many maximal ideals and finally
all height 2 maximal ideals are recursive, as well as all
the height 1 prime non-maximal ideals.
\end{exam}

\section{From B\'ezout to Pr\"ufer}

We extend here our analysis to Pr\"ufer domains $R$ and we partly generalize to their setting the main theorem of the last section. We need a larger family of \lq \lq prime radical" relations in addition to $\DPR$. For every positive integer $n$ we introduce a $(2n+2)$-ary relation
\begin{multline*}
 \DPR_n(R):= \{(a, c, b_1, \ldots, b_n, d_1, \ldots, d_n)\in R^{2n+2} \mid \text{ for all prime ideals } \mathfrak{p}, \mathfrak{q}\subseteq R \text{ with } \\  \mathfrak{p}+\mathfrak{q}\neq R  \text{ either }a\in\mathfrak{p}, c\in \mathfrak{q}, b_i\notin\mathfrak{p}\text{ for some }1\leq i\leq n,\text{ or }d_i\notin\mathfrak{q}\text{ for some }1\leq i\leq n\}.
\end{multline*}
Hence $\DPR$ is just $\DPR_1$.

\begin{theorem}\label{pruefer}
Let $R$ be an effectively given Pr\"ufer domain with an infinite residue field for every maximal ideal. If there are algorithms deciding in $R$ the membership to $\DPR_n$ for every positive integer $n$ (uniformly for all $n$) then the theory $T_R$ of all $R$-modules is decidable.
\end{theorem}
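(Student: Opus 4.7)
The plan is to follow the scheme of the proof of Theorem \ref{decforbezbase}, with the $\DPR_n$ relations substituting for $\DPR$ to compensate for the absence of a greatest common divisor over an arbitrary Pr\"ufer domain. By the Baur--Monk theorem and the infinite-residue-field hypothesis, the decidability of $T_R$ reduces to an effective procedure for all inclusions $(\star)$ between basic open sets of $\Zg_R$. The reductions carried out in $\S\S$ 2--5 are all valid in the Pr\"ufer setting and can be performed effectively: Fact \ref{normal} and Lemma \ref{extension} rewrite the basic open sets in the required form; Corollary \ref{decomp1}, Lemma \ref{reduction}, Lemma \ref{divis}, Corollary \ref{dual} and Lemma \ref{intersection} split the intersections and conjunctions that appear; and Fact \ref{tuganbaev} supplies the witnesses $\alpha, r, s$ needed at each step, locatable by brute search since $R$ is effectively given. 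This brings matters to the two inclusion problems stated at the end of $\S$ 4.

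Next, intersecting with $X_{\mathfrak{p}, \mathfrak{q}}$ for prime ideals $\mathfrak{p}, \mathfrak{q}$ with $\mathfrak{p} + \mathfrak{q} \neq R$, I would invoke Proposition \ref{mainprop} to translate each inclusion into the requirement that a certain Boolean combination $\Delta(P,Q)$ of atomic statements of the form $a \in P$, $b \notin P$, $c \in Q$, $d \notin Q$ hold for every such pair $(\mathfrak{p},\mathfrak{q})$. The order-theoretic clauses $\mu_j \leq_{\mathfrak{p} \cap \mathfrak{q}} \lambda$, $\lambda g h \leq_{\mathfrak{p} \cap \mathfrak{q}} g_i h_i$ and their $\mathfrak{p} = \mathfrak{q}$ analogues appearing in Proposition \ref{mainprop} unfold into such atomic conditions via Lemma \ref{generalize} (which takes the place of Lemma \ref{purposeofgamma} from the B\'ezout proof); these expansions are effective, again by Fact \ref{tuganbaev}.

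The main obstacle is the Pr\"ufer analogue of Lemma \ref{pairprimes}. I would place $\Delta$ in conjunctive normal form and observe that, because $\mathfrak{p}$ and $\mathfrak{q}$ are prime, each conjunct
\[\bigvee_l a_l \in P \ \lor \ \bigvee_j b_j \notin P \ \lor \ \bigvee_h c_h \in Q \ \lor \ \bigvee_k d_k \notin Q\]
is equivalent to
\[\textstyle \prod_l a_l \in P \ \lor \ \bigvee_j b_j \notin P \ \lor \ \prod_h c_h \in Q \ \lor \ \bigvee_k d_k \notin Q.\]
In the B\'ezout case the disjunctions $\bigvee_j b_j \notin P$ and $\bigvee_k d_k \notin Q$ could be further collapsed to $\gcd(b_j)_j \notin P$ and $\gcd(d_k)_k \notin Q$, reducing everything to $\DPR$; over a general Pr\"ufer domain this step fails, which is precisely why the stronger hypothesis that $\DPR_n$ is decidable for every $n$ is required. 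After padding the two tuples with zeros so that they have a common length $n$ (harmless because $0$ lies in every prime ideal, making the new disjuncts $0 \notin P$ and $0 \notin Q$ always false), the requirement that the conjunct hold for every $(\mathfrak{p},\mathfrak{q})$ with $\mathfrak{p} + \mathfrak{q} \neq R$ is exactly the statement
\[(\textstyle\prod_l a_l, \ \prod_h c_h, \ b_1, \ldots, b_n, \ d_1, \ldots, d_n) \in \DPR_n(R),\]
which can be tested by the hypothesised algorithm for $\DPR_n$. Since $\Delta$ has only finitely many conjuncts and all preceding reductions are effective, the whole decision procedure for $(\star)$ is effective.
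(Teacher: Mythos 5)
Your proof is correct and follows essentially the same route as the paper: both treat the B\'ezout argument as the template, identify the $\gcd$-dependent collapse in Lemma~\ref{pairprimes} as the one step that must be reworked, and replace it by packaging each CNF-conjunct directly as an instance of $\DPR_n$, with Lemma~\ref{generalize} standing in for Lemma~\ref{purposeofgamma}. One small but genuine correction you make over the paper's own wording deserves note: to equalize the lengths of the $b$-list and the $d$-list the paper says to pad with the element $1$, but $1 \notin \mathfrak{p}$ holds for every prime $\mathfrak{p}$, which would make the disjunct vacuously true; your choice of padding with $0$ (so the added disjunct $0 \notin \mathfrak{p}$ is always false, leaving the condition unchanged) is the right one.
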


\begin{proof}
Most of the argument working over B\'ezout domains also applies to Pr\"ufer domains. However we need to be careful about the steps involving $\gcd$ - indeed just one, that is, Lemma \ref{pairprimes} about Boolean combinations of conditions of pairs of prime ideals. That result remains valid, provided that we strengthen its assumptions and require that all the relations $\DPR_n$ are recursive in $R$. In fact, without $\gcd$, the various conditions $b_{hj} \notin P$, or $d_{hl} \notin Q$ cannot be joined in single statements $\gcd (b_{hj})_{j\in J_h} \notin P$, $\gcd (d_{hl})_{l\in K_h} \notin Q$. On the other hand, one can assume without loss of generality that there are as many $j$'s as $l$'s (otherwise add some $1$'s as $b_{hj}$ or $d_{hl}$). Therefore, if all the $\DPR_n(R)$ are recursive (uniformly for all $n$), then we can decide the truth value of all Boolean combinations of conditions on a pair of prime ideals.
\end{proof}

This theorem raises two questions which we were not able to answer: first, is the condition on the $\DPR_n$ not only sufficient but also necessary to guarantee that $T_R$ is decidable? And secondly, can we bound the $n$'s to check, as in the B\'ezout case where $n=1$ is enough?

%

\begin{cor}\label{krullN}
Let $R$ be a Pr\"{u}fer domain all of whose residue fields are infinite and $N$ be a positive integer such that any finitely generated ideal of $R$ can be generated by $N$ elements (in particular, this is the case when $R$ has Krull dimension $N-1$). If there are algorithms deciding membership of $\DPR_N(R)$ then $T_R$ is decidable.
\end{cor}
\begin{proof}

Clearly, if we can effectively decide $\DPR_N(R)$, then the same is true of $\DPR_n(R)$ for every $n \leq N$. This is because $(a,c,b_1,\ldots,b_n,d_1,\ldots,d_n)\in \DPR_n(R)$ if and only if $$(a,c,b_1,\ldots,b_n,\underbrace{b_n,\ldots,b_n}_{N-n \text{ times }},d_1,\ldots,d_n,\underbrace{d_n,\ldots,d_n}_{N-n \text{ times }})\in \DPR_N(R).$$

Now suppose that $n > N$ and $a,c,b_1,\ldots b_n,d_1,\ldots,d_n\in R$. Since $R$ is effectively given and all finitely generated ideals can be generated by $N$ elements, we can effectively find $b_1',\ldots,b_N'\in R$ and $d_1',\ldots,d_N'\in R$ such that $\sum_{j=1}^n b_j R = \sum_{i=1}^N b_i'R$ and $\sum_{n=1}^n d_j R = \sum_{i=1}^N d_i'R$. On the other hand $(a,c,b_1,\ldots b_n,d_1,\ldots,d_n)\in \DPR_n(R)$ if and only if $(a,c,b_1',\ldots b_N',d_1',\ldots,d_N')\in \DPR_N(R)$.

Finally note that Heitmann shows in \cite{Hei76} that if a Pr\"{u}fer domain has Krull dimension $d$ then every finitely generated ideal can be generated by $d+1$ elements.
\end{proof}

Next we provide a partial converse to this result. We need the following easy fact.

\begin{lemma}\label{decidablerelations}
Let $R$ be a commutative ring. If $T_R$ is decidable then
there is an algorithm which given $a,b_1,...,b_n\in R$ answers whether $a\in\rad(b_1R+...+b_nR)$.
\end{lemma}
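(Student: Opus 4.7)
The plan is to reduce the question whether $a \in \rad(I)$, where $I := b_1 R + \cdots + b_n R$, to a single decision instance of $T_R$. The key device is the $L_R$-sentence
\[\sigma := \exists x \left( x \neq 0 \, \wedge \, \bigwedge_{i=1}^n x b_i = 0 \right) \wedge \forall y (y a = 0 \to y = 0) \wedge \forall y \exists z (y = z a),\]
which informally says that the module contains a nonzero element annihilated by $I$ while multiplication by $a$ is an automorphism. Note that $\sigma$ is effectively constructible from the data $a, b_1, \ldots, b_n$. The main claim I would establish is
\[a \notin \rad(I) \iff \sigma \text{ is satisfied in some } R\text{-module} \iff \neg \sigma \notin T_R.\]

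For the direction $(\Rightarrow)$, the commutative-algebraic fact that $a \notin \rad(I)$ means $\{1, a, a^2, \ldots\} \cap I = \emptyset$, and hence a standard Krull/Zorn argument yields a prime $\mathfrak{p}$ with $I \subseteq \mathfrak{p}$ and $a \notin \mathfrak{p}$. The residue field $k(\mathfrak{p}) = R_\mathfrak{p}/\mathfrak{p} R_\mathfrak{p}$, viewed as an $R$-module, then witnesses $\sigma$: its identity element $1$ is nonzero and annihilated by every $b_i \in \mathfrak{p}$, while $a \notin \mathfrak{p}$ acts on $k(\mathfrak{p})$ as a nonzero scalar on a field, hence invertibly.

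For the converse, I would take any $R$-module $M$ satisfying $\sigma$ and pick $m \in M$ witnessing the existential. Injectivity of $a$ on $M$ forces $m a^k \neq 0$ for every $k \geq 0$ by a trivial induction, so $\ann_R(m)$ contains $I$ and is disjoint from the multiplicative set $\{a^k : k \geq 0\}$. A second Krull-style extension of $\ann_R(m)$ to a prime ideal $\mathfrak{p}$ still disjoint from $\{a^k\}$ then gives $I \subseteq \mathfrak{p}$ and $a \notin \mathfrak{p}$, so $a \notin \rad(I)$.

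With the equivalence in hand, the algorithm is just: given $a, b_1, \ldots, b_n$, write down $\sigma$ and apply the assumed decision procedure for $T_R$ to $\neg \sigma$. There is no serious obstacle beyond finding the right $\sigma$: it must translate ``there is a prime ideal of $R$ containing $I$ but not $a$'' into a first-order property of $R$-modules, and the conjunction above does exactly this, the Krull argument providing the dictionary in both directions.
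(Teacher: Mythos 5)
Your proof is correct and takes essentially the same approach as the paper: both reduce the radical membership question to deciding a single $L_R$-sentence in $T_R$, and your sentence $\sigma$ is, modulo the superfluous surjectivity conjunct $\forall y\,\exists z\,(y=za)$, exactly the negation of the implication $\exists x\,(x\neq 0\wedge\bigwedge_i xb_i=0)\to\exists x\,(x\neq 0\wedge xa=0)$ that the paper tests. Dropping that conjunct would let you use $R/\mathfrak{p}$ instead of $k(\mathfrak{p})$ in the forward direction, as the paper does; the backward directions differ only in that the paper argues directly via $R/\mathfrak{p}$ over an arbitrary prime containing $I$, whereas you extend $\ann_R(m)$ to a prime avoiding $\{a^k\}$ — both are routine Krull arguments.
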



\begin{proof}
We claim that $a\in \rad(b_1R+...+b_nR)$ if and only if
\[\exists x (x\neq 0\wedge \bigwedge_{i=1}^nxb_i=0)\rightarrow \exists x(x\neq 0\wedge xa=0)\in T_R.\]

Suppose that $a\in \rad(b_1R+...+b_nR)$. Then there exists a positive integer $k$ and $r_i\in R$ for $1\leq i\leq n$ such that $a^k = \sum_{i=1}^nb_ir_i$. Let $M$ be a module over $R$ with an element $m \neq 0$ satisfying $mb_i=0$ for $1\leq i\leq n$. Then $ma^k=0$. Thus there exists $l\in \N$, $l < k$ such that $(ma^{l})a=0$ and $ma^l\neq 0$.

Conversely, suppose that
\[\exists x (x\neq 0\wedge \bigwedge_{i=1}^nxb_i=0)\rightarrow \exists x(x\neq 0\wedge xa=0)\in T_R.\] If $\mathfrak{p}$ is a prime ideal and $b_1,\ldots,b_n\in \mathfrak{p}$ then $0=(1+\mathfrak{p}) \, b_i\in R/\mathfrak{p}$. Thus there exists $r\in R\backslash\mathfrak{p}$ such that $0=(r+\mathfrak{p})a\in R/\mathfrak{p}$. Hence $ra\in\mathfrak{p}$ and so $a\in \mathfrak{p}$. Thus $a\in\rad(b_1R+...+b_nR)$.
\end{proof}

\begin{prop}
Let $R$ be a Pr\"{u}fer domain of Krull dimension $1$ all of whose residue fields are infinite. Then the following are equivalent:
\begin{enumerate}
\item $T_R$ is decidable.
\item $\DPR_2(R)$ is recursive.
\item There is an algorithm which given $a,b_1,b_2\in R$ answers whether $a\in\rad(b_1R+b_2R)$.
\end{enumerate}
\end{prop}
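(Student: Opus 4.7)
The plan is to establish the cycle $(1)\Rightarrow(3)\Rightarrow(2)\Rightarrow(1)$. The implication $(1)\Rightarrow(3)$ is immediate from Lemma~\ref{decidablerelations} specialised to $n=2$. The implication $(2)\Rightarrow(1)$ is a direct application of Proposition~\ref{krullN} with $N=2$: by the theorem of Heitmann cited just before that proposition, every finitely generated ideal of a Pr\"ufer domain of Krull dimension $1$ is generated by two elements, so the hypothesis of Proposition~\ref{krullN} is met. The work therefore lies entirely in $(3)\Rightarrow(2)$.

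For $(3)\Rightarrow(2)$, I start from the fact that in a Pr\"ufer domain of Krull dimension $1$ every non-zero prime is maximal, and distinct maximal ideals are incomparable. Hence, as already exploited in Corollary~\ref{krull1}, two prime ideals $\mathfrak{p},\mathfrak{q}$ of $R$ satisfy $\mathfrak{p}+\mathfrak{q}\neq R$ precisely when $\mathfrak{p}=\mathfrak{q}$, or $\mathfrak{p}=0$, or $\mathfrak{q}=0$. Unfolding the negation of $(a,c,b_1,b_2,d_1,d_2)\in\DPR_2(R)$ and splitting the witness pair $(\mathfrak{p},\mathfrak{q})$ into these three (possibly overlapping) sub-cases, I claim that the failure of $\DPR_2$ at $(a,c,b_1,b_2,d_1,d_2)$ is equivalent to the disjunction of
\begin{itemize}
\item[$(Y_1)$] $b_1=b_2=0$, $a\neq 0$ and $c\notin\rad(d_1R+d_2R)$, obtained from witnesses with $\mathfrak{p}=0$;
\item[$(Y_2)$] $d_1=d_2=0$, $c\neq 0$ and $a\notin\rad(b_1R+b_2R)$, obtained from witnesses with $\mathfrak{q}=0$;
\item[$(Z)$] $ac\notin\rad(b_1R+b_2R+d_1R+d_2R)$, obtained from witnesses with $\mathfrak{p}=\mathfrak{q}$, where the two conditions $a,c\notin\mathfrak{p}$ collapse to the single condition $ac\notin\mathfrak{p}$ by primality.
\end{itemize}
The forward direction of the claim is straightforward: each of $(Y_1)$, $(Y_2)$, $(Z)$ produces explicit primes violating $\DPR_2$; conversely any violating pair $(\mathfrak{p},\mathfrak{q})$ falls under one of the three sub-cases by the Krull dimension $1$ classification above.

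Once the equivalence is in place, the algorithm follows quickly. Conditions $(Y_1)$ and $(Y_2)$ reduce directly to testing equalities with $0$ and running the oracle of $(3)$. For $(Z)$, I invoke Heitmann's theorem again: the ideal $b_1R+b_2R+d_1R+d_2R$ can be generated by two elements, so a brute-force search in the effectively given list of $R$ produces $e,f\in R$ such that $b_1R+b_2R+d_1R+d_2R=eR+fR$ (membership of a given element in a finitely generated ideal of an effectively given ring is semidecidable, and both inclusions are semidecidable; the search terminates because a witness pair $(e,f)$ exists). Since $\rad(b_1R+b_2R+d_1R+d_2R)=\rad(eR+fR)$, the oracle of $(3)$ decides whether $ac\in\rad(eR+fR)$, and hence settles $(Z)$. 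The only real obstacle is the case analysis leading to $(Y_1)$, $(Y_2)$, $(Z)$; once that is correctly carried out, the algorithmic reduction is routine.
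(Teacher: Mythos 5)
Your proof is correct and follows essentially the same route as the paper: the same cycle of implications, the same two invocations of Lemma~\ref{decidablerelations} and Proposition~\ref{krullN}, and the same Krull-dimension-$1$ case split on $\mathfrak{p}=\mathfrak{q}$, $\mathfrak{p}=0$, $\mathfrak{q}=0$, with your $(Z)$, $(Y_1)$, $(Y_2)$ being precisely the negations of the paper's three conjuncts characterizing membership in $\DPR_2(R)$. The closing remark about effectively reducing the four-generator ideal to two generators via Heitmann also matches the paper's final step.
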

\begin{proof}
$(1) \Ra (3)$ This is a particular case of Lemma \ref{decidablerelations}, (2).

$(2) \Ra (1)$ This is a special case of Corollary \ref{krullN}.


$(3) \Ra (2)$ Since $R$ has Krull dimension $1$ all non-zero prime ideals are maximal. Thus if $\mathfrak{p},\mathfrak{q}$ are non-zero prime ideals then $\mathfrak{p}+\mathfrak{q} \neq R$ if and only if $\mathfrak{p}=\mathfrak{q}$. It follows that, for $a, c, b_1, b_2, d_1, d_2 \in R$, $(a,c,b_1,b_2,d_1,d_2) \in \DPR_2(R)$  if and only if the following conditions hold:
\begin{enumerate}
\item for all prime ideals $\mathfrak{p}$, $a\in\mathfrak{p}$, $b_1\notin\mathfrak{p}$, $b_2\notin \mathfrak{p}$, $c\in\mathfrak{p}$, $d_1\notin\mathfrak{p}$ or $d_2\notin\mathfrak{p}$
\item for all prime ideals $\mathfrak{p}$, $a\in\mathfrak{p}$, $b_1\notin\mathfrak{p}$, $b_2\notin\mathfrak{p}$, $c=0$, $d_1\neq 0$ or $d_2\neq 0$
\item for all prime ideals $\mathfrak{p}$, $a=0$, $b_1\neq 0$, $b_2\neq 0$, $c\in\mathfrak{p}$, $d_1\notin\mathfrak{p}$ or $d_2\notin\mathfrak{p}$.
\end{enumerate}

The first condition is equivalent to $ac\in\mathfrak{p}$ or $b_1R+b_2R+d_1R+d_2R\nsubseteq\mathfrak{p}$ for all $\mathfrak{p}$. This is equivalent to $ac\in\rad(b_1R+b_2R+d_1R+d_2R)$.

The second condition is equivalent to $a\in\rad(b_1R+b_2R)$, $c=0$, $d_1\neq 0$ or $d_2\neq 0$.

The third condition is equivalent to $a=0$, $b_1\neq 0$, $b_2\neq 0$, or $c\in\rad(d_1R+d_2R)$.

Finally note that since $R$ is effectively given and all finitely generated ideals can be generated by two elements, we can effectively find $e_1,e_2$ such that $e_1R+e_2R=b_1R+b_2R+d_1R+d_2R$. Hence we are done.\end{proof}

\end{document}